\DeclareMathAlphabet{\mymathbb}{U}{BOONDOX-ds}{m}{n}
\def\smalloverbrace#1{\mathop{\vbox{\m@th\ialign{##\crcr\noalign{\kern3\p@}%
\tiny\downbracefill\crcr\noalign{\kern3\p@\nointerlineskip}%
$\hfil\displaystyle{#1}\hfil$\crcr}}}\limits}
\newcommand{\downset}[2][ ]{%
\downarrow \! \{ #2 \}_{#1}
}
\newcommand{\upset}[2][ ]{%
\uparrow \! \{ #2 \}_{#1}
}
\DeclareSymbolFont{extraup}{U}{zavm}{m}{n}
\DeclareMathSymbol{\varspade}{\mathalpha}{extraup}{85}
\DeclareMathSymbol{\varheart}{\mathalpha}{extraup}{86}
\DeclareMathSymbol{\vardiamond}{\mathalpha}{extraup}{87}
\DeclareMathSymbol{\varclub}{\mathalpha}{extraup}{88}
\newcommand{\ffrac}[2]{\ensuremath{\frac{\displaystyle #1}{\displaystyle #2}}}
\newcommand{\unsim}{\mathord{\sim}}
\newcommand{\lav}{\left\vert}      
\newcommand{\rav}{\right\vert}     
\newcommand{\ev}[1]{( #1 )} 
\newcommand{\ew}[1]{\left( #1 \right)}
\newcommand{\Z}{\mathbb{Z}}
\newcommand{\C}{\mathbb{C}}
\newtheorem{theorem}{Theorem}[section]
\newtheorem{lemma}[theorem]{Lemma}
\newtheorem{corollary}[theorem]{Corollary}
\newtheorem{proposition}[theorem]{Proposition}
\numberwithin{equation}{section}
\theoremstyle{definition}
\newtheorem{remark}{Remark}
\newtheorem{definition}{Definition}
\newtheorem{example}{Example}
\title{Characteristic Polynomials and Hypergraph Generating Functions via Heaps of Pieces}
\author[1]{Joshua Cooper}
\author[2]{Krystal Guo}
\author[3]{Utku Okur}
\affil[1]{Department of Mathematics, University of South Carolina, U.S.A. (\href{mailto:email}{cooper@math.sc.edu})}
\affil[2]{Korteweg-de Vries Institute for Mathematics, University of Amsterdam, The Netherlands. (\href{mailto:email}{k.guo@uva.nl})}
\affil[3]{Department of Mathematics, University of South Carolina, U.S.A. (\href{mailto:email}{uokur@email.sc.edu})}
\date{\today}
\let\underbrace\LaTeXunderbrace
\tikzset
{
graph_1/.pic=
{%
\node[fill = black, circle, inner sep=1pt, label = below:$1$ ] (c1) at (-2,0) {};     
\node[fill = black, circle, inner sep=1pt, label = below:$2$] (c2) at (0,0) {};  
\node[fill = black, circle, inner sep=1pt, label = below:$3$] (c3) at (2,0) {};          

\draw (c3) -- node[midway, yshift=-8pt	]{$f$} (c2);
\draw (c2) -- node[midway, yshift=-6pt	]{$e$} (c1);

}
}
\tikzset
{
graph_2/.pic=
{%
\node[fill = black, circle, inner sep=1pt, label = $1$ ] (c1) at (1.7,1) {};     
\node[fill = black, circle, inner sep=1pt, label = below: $2$] (c2) at (0,0) {};  
\node[fill = black, circle, inner sep=1pt, label = $3$] (c3) at (0,2) {};  
\node[fill = black, circle, inner sep=1pt, label = $4$] (c4) at (3.7,1) {};                

\draw (c3) -- node[midway, left=0.7pt	]{$f$} (c2);
\draw (c2) -- node[midway, xshift=4pt, yshift=-6pt	]{$e$} (c1);
\draw (c3) -- node[midway, xshift=4pt, yshift=6pt	]{$g$} (c1);
\draw (c1) -- node[midway, yshift=5pt	]{$a$} (c4);

}
}
\begin{document}
\maketitle

\begin{abstract}

It is a classical result due to Jacobi in algebraic combinatorics that the generating function of closed walks at a vertex $u$ in a graph $G$ is determined by the rational function 
\[
\frac{\phi_{G-u}(t)}{\phi_G(t)} 
\]
where $\phi_G(t)$ is the characteristic polynomial of $G$. In this paper, we show that the corresponding rational function for a hypergraph is also a generating function for some combinatorial objects in the hypergraph. 

We make use of the Heaps of Pieces framework, developed by Viennot, demonstrating its use on graphs, digraphs, and multigraphs before using it on hypergraphs. In the case of a graph $G$, the pieces are cycles and the concurrence relation is sharing a vertex. The pyramids with maximal piece containing a vertex $u \in V(G)$ are in one-to-one correspondence with closed walks at $u$. In the case of a hypergraph $\mathcal{H}$, connected ``infragraphs'' can be defined as the set of pieces, with the same concurrence relation: sharing a vertex.  Our main results are established by analyzing multivariate resultants of polynomial systems associated to adjacency hypermatrices.

\textit{Keywords: hypergraphs, characteristic polynomial, generating functions} 

\textit{Mathematics Subject Classifications 2020: Primary 05C65; Secondary 05C50, 05A15, 15A69.} 
\end{abstract}

\hypersetup{
linkcolor=black
} 

\tableofcontents

\hypersetup{
linkcolor=blue
} 

\section{Introduction}

Given a simple graph $G$, let $\phi_G(t)$ be the characteristic polynomial of its adjacency matrix $\mathbb{A}_G$. For a vertex $u\in V\ev{G}$, a well-known classical identity (\cite[Equation (1), p.~52]{godsil}) is,
\begin{equation}
\sum_{d\geq 0} w_d^{u} t^{d} = \frac{t^{-1}\phi_{G-u}(t^{-1})}{\phi_G(t^{-1})}
\label{eqn:origin}
\end{equation}
where the $d$-th coefficient $w_d^{u}$ is the number of closed walks of length $d$ at $u$, and $G-u$ is the \textit{vertex-deleted subgraph} of $G$, obtained by deleting the vertex $u$, together with all the edges incident with $u$. This result is attributed to Jacobi in 1833 (\cite{Jacobi1834}, see also \cite{godsil, Godsil_1992}). Note that the rational function on the right-hand side is equal to the generating function on the left-hand side, with non-negative integer coefficients. Thus, the function in question is an example of a broader class of functions of interest, the \textit{rational generating functions}, investigated in \cite{melou}.

In this paper, we explicate a proof of equation (\ref{eqn:origin}) (see \Cref{thm:walks_at_u_and_quotient} below), using the application of the Heaps of Pieces framework on the walks of a simple graph, and later extend this approach to the setting of a hypergraph. 

The theory of Heaps of Pieces is developed by Viennot (see \cite[p.~36]{viennot_french_paper}, \cite{viennot_talk, abdesselam_brydges, melou, krattenthaler}). In the case of a finite simple graph $G$, one takes cycles of $G$ as pieces, and sharing a vertex as the concurrence relation. In Section \ref{sec:bijections_pyramids_walks}, this approach is demonstrated and and laid out in detail, both for completeness and for reference. In particular, \Cref{cor:bij_walks_pyramids} establishes the bijection between closed walks of $G$ at a fixed vertex $u$, and the pyramids with maximal piece containing $u$. 

Afterwards, in Section \ref{sec:heap_hyper}, we turn to hypergraphs of rank $k\geq 3$ and inquire into how much of the above theory generalizes to hypergraphs. For a given uniform hypergraph $\mathcal{H}$ of order $n$ and rank $k\geq 3$ (i.e., there are $n$ vertices and each edge has $k$ vertices), the adjacency hypermatrix $\mathbb{A}_\mathcal{H}$ is the $n$-dimensional symmetric $k$-hypermatrix with entries 
$$
a_{i_1 \ldots i_k} =  \begin{cases}  \ffrac{ 1 }{(k-1)!} & \text{ if } \{i_1,\ldots,i_k\} \text{ is an edge} \\ 0 & \text{otherwise}
\end{cases} 
$$
The characteristic polynomial $\phi_\mathcal{H}\ev{t}$ of a hypergraph $\mathcal{H}$ is the multivariate resultant 
$$ 
\text{res}  \nabla  \ev{ t\cdot x_1^{k} + \ldots + t\cdot x_n^{k} - F_\mathcal{H}\ev{ x_1, \ldots, x_n }}
$$
where $\nabla$ is the gradient operator and $F_\mathcal{H}$ is the Lagrangian of $\mathcal{H}$ (cf. \Cref{def:char_poly}). In \cite{cc1}, coefficients of the characteristic polynomial of a hypergraph is expressed in terms of the counts of its ``infragraphs" (cf. \Cref{def:veblen_multi_hypergraph}). In this paper, we apply the Heaps of Pieces framework, by taking infragraphs as the set of pieces, and sharing a vertex as the concurrence relation. 

In particular, we express $t^{-N} \cdot \phi_\mathcal{H}\ev{t}$ as a sum over ``trivial" heaps, where $N:= \deg\ev{\phi_\mathcal{H}\ev{t}} = n\ev{k-1}^{n-1}$ is the degree of the characteristic polynomial. The infinite Laurent series $ - \log\ev{ t^{-N} \cdot \phi_\mathcal{H}\ev{t} }$ is expressed as a sum over ``pyramids", and the coefficients of the quotient 
$$
\dfrac{t^{-\deg\ev{ \phi_{\mathcal{H}-u}\ev{t} } } \cdot \phi_{\mathcal{H}-u}\ev{t} }{ t^{-\deg\ev{ \phi_{\mathcal{H}}\ev{t} } } \cdot \phi_{\mathcal{H}}\ev{t} } 
$$ 
are given by pyramids with maximal piece containing the vertex $u$. For this purpose, we note that each piece $\beta$ is allowed to have a weight $w\ev{\beta} \in R$, where $R$ is any commutative ring with unity, as long as the weight of a heap is the product of the weights of the pieces, i.e., the weight function is required to be multiplicative (see \Cref{thm:viennot_krattenthaler}). We define the weight function $w_n$ on each infragraph $X$ of a hypergraph $\mathcal{H}$ as follows (cf. Definition \ref{def:parallel_equiv} and Section \ref{subsec:kocay_lemma}):
$$ w_n\ev{X} = \sum_{ \mathbf{S} \in \widetilde{\mathcal{S}} \ev{ X }} \ev{ -\ev{k-1}^{n} }^{ c\ev{ \mathbf{S} } } \dfrac{1}{\alpha_{\mathbf{S}}} C_{\mathbf{S} } $$

When two infragraphs are disconnected, we show that the weight of the disjoint union is the product of the weights of the components, so the multiplicativity condition is satisfied. This setting generalizes the Heaps of Pieces framework outlined above for graphs of rank $2$ in the following sense: The weight of a connected infragraph $X$ of a graph $G$ of rank $2$ is non-zero only if $X$ is indecomposable, i.e., it is a cycle or an edge (see \cite{cc2}).

Although connected infragraphs can be taken as the pieces, the question remains open, whether there is a smaller set of pieces, from which we can obtain the weight of any infragraph as a product of those pieces. 

The Heaps of Pieces framework provides a geometric and intuitive way to think about some types of combinatorial objects and has proven useful in many enumeration problems. A non-exhaustive list of the applications of this theory includes: counting parallelogram polyominoes (\cite{krattenthaler,melou_polyomino}, \cite[A006958]{oeis}), counting directed animals of size $n$ (or directed $n$-ominoes in standard position) (\cite[p.~359]{goodman},\cite[p.~80]{flajolet},\cite{melou,viennot_beauchamps},\cite[A005773]{oeis}), the matching polynomial (\cite[Example 5.7, p.~336]{viennot}). The connections between walks of a simple digraph, heaps of pieces and words of a regular language have been established in \cite{melou}. Another interesting fact is the isomorphism between the heap monoid and the trace monoid (or a partially commutative monoid) (\cite{samy_juge,krattenthaler}). 

In the appendix, we collect some of the routine proofs for reference. 

\section{Heaps of Pieces and the Characteristic Polynomial of a Graph}
\subsection{Preliminaries}
\label{sec:preliminaries_for_graphs}

In this paper, we use $A\sqcup B$ to denote the union of two disjoint sets $A,B$. The \textit{Kleene star closure} of a set $A$ is defined as an infinite union of finite tuples:
$$
A^{*} = \bigsqcup_{d\geq 0} A^d 
$$
where $A^d = \underbrace{A\times \ldots \times A}_{d \text{ many}} = \{ \ev{ a_1,\ldots, a_d} : a_i \in A, \text{ for each } i = 1,\ldots,d \}$ for $d\geq 1 $ and $A^0 = \{ \emptyset \}$. 

Next, we define a \textit{multi-(hyper)graph of rank $k\geq 2$}, by a slight modification of the definition of a graph given in \cite[p.~2]{bondy}. A \textit{multi-hypergraph} $X$ of rank $k\geq 2$ is a triple $X = \ev{ V\ev{X}, E\ev{X}, \varphi }$, where $V\ev{X}, E\ev{X}$ are finite sets, and $\varphi: E\ev{X} \rightarrow \binom{V\ev{X}}{k}$ is a mapping from the edge-set into $k$-subsets of the vertex-set, such that the coordinates of $\varphi\ev{e}$ are pairwise distinct, for each $e\in E\ev{X}$ (there are no degenerate edges or loops.) The \textit{order} of $X$ is $|V\ev{X}|$. The multi-hypergraph $X$ is called a \textit{multi-graph} if it is of rank $k=2$. 
\begin{enumerate}
\item[\textit{i)}] If $X = \ev{ V\ev{X}, E\ev{X}, \varphi }$ is given such that $\varphi$ is injective, then $X$ is a \textit{simple} hypergraph. If $\varphi$ is surjective, then $X$ is a \textit{complete} hypergraph.
\item[\textit{ii)}] Two edges $e_1, e_2 \in E\ev{X}$ are \textit{parallel}, denoted $e_1 \sim e_2$, provided $\varphi\ev{e_1} = \varphi\ev{e_2}$. Given a multi-hypergraph $X$ and an edge $e\in E\ev{X}$, then the \textit{multiplicity} $m_X\ev{e}$ of $e$ is the cardinality $| [e]_{\unsim} | = | \{ f\in E\ev{X}: \varphi\ev{f} = \varphi\ev{e} \} | $ of the equivalence class of edges parallel to $e$. For each $v_1,\ldots,v_k \in V\ev{X}$, we write $m_X\ev{ \{ v_1, \ldots, v_k \} } = | \{ e\in E\ev{X} : \varphi\ev{e} = \{ v_1, \ldots, v_k \} \} | $.
\item[\textit{iii)}] We omit $\varphi$, whenever there is no confusion to arise. For example, given an edge $e$ and a vertex $u$, we write $u\in e$ instead of $u\in \varphi\ev{e}$ or $u\in \varphi\ev{[e]_{\unsim}}$. We define $\deg_X\ev{u} = |\{ e\in E\ev{X}: u\in e\} | $. 
\item[\textit{iv)}] The product of factorials of the multiplicities of the edges of $X$ is denoted as 
$$M_X := \prod_{ [e]_{\unsim} \in E\ev{X} / \unsim } m_X\ev{e} ! $$
\end{enumerate}

A \textit{multi-digraph}, or shortly, a \textit{digraph} $D=\ev{ V\ev{D}, E\ev{D}, \psi}$ (of rank $2$) is a triple, where $V\ev{D}, E\ev{D}$ are finite sets, and $\psi: E\ev{D} \rightarrow V\ev{D} \times V\ev{D}$ is a function, such that the coordinates of $\psi\ev{e}$ are distinct, for each edge $e\in E\ev{D}$. (There are no loops.) 
\begin{enumerate}
\item[\textit{i)}] Two edges $e_1, e_2 \in E\ev{D}$ are \textit{parallel}, denoted $e_1 \unsim e_2$, provided $\psi\ev{e_1} = \psi\ev{e_2}$. (We use the same notation for the equivalence relation of parallelism in a digraph.) 
\item[\textit{ii)}] Given a digraph $D$ and an edge $e\in E\ev{D}$, then the \textit{multiplicity} $m_D\ev{e}$ of $e$ is the cardinality of the set $[e]_{\unsim} = \{ f\in E\ev{D}: \psi\ev{f} = \psi\ev{e} \}$ of edges parallel to $e$. Also, we define $m_D\ev{u,v} = | \{ e\in E\ev{D} : \psi\ev{e} = \ev{u,v} \}|$, for each $u,v\in V\ev{D}$. 
\item[\textit{iii)}] Whenever there is no confusion to arise, we omit $\psi$. For example, if there is a unique edge $e\in E\ev{D}$ with $\psi\ev{ e } = \ev{ u,v }$, then we will shortly write $(u,v)$ instead of $e$. The in-degree of a vertex $u$ is defined as $\deg_D^{-}\ev{u} = \sum_{v\in V\ev{D}} m\ev{ v,u }$. 
\item[\textit{iv)}] Given an undirected multi-graph $X=(V(X),E(X),\varphi)$ (of rank $2$) and an digraph $O=(V(O),E(O),\psi)$, then $O$ is an \textit{orientation} of $X$, provided $V\ev{O} = V\ev{X}$ and $E\ev{O} = E\ev{X}$ and we have $\varphi\ev{ e } = \theta\ev{ \psi\ev{ e } }$ for each $e\in E\ev{X} = E\ev{O}$, where 
\begin{align*}
\theta: V\ev{X} \times V\ev{X} &\rightarrow \binom{V\ev{X}}{2} \\ 
\ev{ v_1, v_2} & \mapsto \{ v_1, v_2\}
\end{align*}
is the ``forgetful" mapping. The set of orientations of $X$ is denoted as $\mathcal{O}\ev{X}$. So, we have $|\mathcal{O}\ev{X}| = 2^{|E\ev{X}|}$. 
\end{enumerate}

Given two undirected multi-hypergraphs $X = \ev{ V, E, \varphi}$ and $X' = \ev{V', E', \varphi'}$, then an \textit{isomorphism from $X$ to $Y$} is a pair of bijections
$$ \ev{ \eta: V \rightarrow V', \xi: E \rightarrow E' } $$
such that $\varphi' \ev{ \xi \ev{ e } } = \eta \ev{ \varphi \ev{ e } }$ for each $e\in E$. We write $X \cong Y$, if $X$ and $Y$ are isomorphic, i.e., there is at least one isomorphism between them. 

For digraphs $D = \ev{ V, E, \psi}$ and $D' = \ev{V', E', \psi'} $, we define $D\cong D'$ in a similar way: $D \cong D'$ if and only if there are bijections $\ev{ \eta: V \rightarrow V', \xi: E \rightarrow E' } $ satisfying $\psi' \ev{ \xi \ev{ e } } = \eta \ev{ \psi \ev{ e } }$ for each $e\in E$.

\begin{definition}[Vertex-fixing isomorphisms of multi-graphs]
\label{def:approx}
Given two undirected multi-hypergraphs $X = \ev{ V, E, \varphi}$ and $X' = \ev{V', E', \varphi'}$, then we write $X \approx X'$ if $V = V'$ and $ m_X\ev{ \{  v_1,\ldots,v_k \} } = m_{X'}\ev{ \{ v_1,\ldots,v_k \} }$ for each $v_1,\ldots,v_k\in V = V'$. Note that $X \approx X'$ only if there is a vertex-fixing isomorphism $X\rightarrow X'$ that interchanges parallel edges and in particular, $X\cong X'$. 
\end{definition}

Given a multi-hypergraph $X$, then we may obtain a simple hypergraph $\underline{X}$, called the \textit{flattening} of $X$, by removing duplicate edges. More formally,

\begin{definition}[Flattening of a multi-hypergraph]
\label{def:flattening}
\phantom{a}

Given a multi-hypergraph $X = \ev{ V, E, \varphi: E\rightarrow \binom{V}{k} }$, then the \textit{flattening} $\underline{X}$ of $X$ is the simple hypergraph $\underline{X} = \ev{ V, E /\unsim, \widetilde{ \varphi } : E/ \unsim \rightarrow \binom{V}{k} }$, obtained by taking the quotient of the edge set, under the equivalence relation of parallelism, where $\widetilde{\varphi}([e]_{\unsim}) = \varphi(e)$ (which is well-defined, since $\varphi(e)=\varphi(f)$ for all $f \in [e]_{\unsim}$). 
\end{definition}

Given simple hypergraphs $X,Y$, then the number of isomorphic copies of $X$ in $Y$ is denoted as,
$$
\genfrac{\lbrack}{\rbrack}{0pt}{0}{Y}{X} = |\{ Z: Z\text{ is a subgraph of } Y \text{ and } Z \cong X \}|
$$

By Euler's Theorem (\cite[Theorem 3.5]{bondy}), a multi-graph $X$ is Eulerian (in the sense defined in Section \ref{sec:operations}) if and only if it has only even degrees. Now, we define Veblen multi-hypergraphs of rank $k\geq 2$, which specialize to Eulerian multi-graphs when the rank is $k=2$. In the next section, we will see that the Harary-Sachs Theorem for hypergraphs calculates the coefficients of the characteristic polynomial of a hypergraph in terms of Veblen multi-hypergraphs. 
\begin{definition}

\label{def:veblen_multi_hypergraph}

\phantom{a}
Let $k\geq 2$ be fixed. 

\begin{enumerate}
\item[\textit{i)}] A multi-hypergraph $X$ is \textit{$k$-valent} if the degree of each vertex of $X$ is divisible by $k$. 
\item[\textit{ii)}] A $k$-valent multi-hypergraph of rank $k$ is called a \textit{Veblen multi-hypergraph}. 
\end{enumerate}

For a Veblen multi-hypergraph $X$, let $c\ev{X}$ be the number of connected components of $X$. Let $\mathcal{V}^{m}$ denote the set of isomorphism classes of Veblen multi-hypergraphs with $m$ components, where $m\geq 1$. In other words, $\mathcal{V}^{m}$ is the set of equivalence classes $[X]_{ \cong }$ of Veblen multi-hypergraphs $X$ with $m$ components. By an abuse of notation, we will refer to the (unlabelled) elements $[X]_{\cong} \in \mathcal{V}^{m}$ by $X$, omitting the brackets. In particular, $\mathcal{V}^{1}$ is the set of isomorphism classes of connected Veblen hypergraphs. Let $\mathcal{V}^{\infty}$ be the set isomorphism classes of (possibly disconnected) Veblen multi-hypergraphs. In other words, we define
$$ \mathcal{V}^{\infty} = \bigsqcup_{ m = 1 }^{\infty} \mathcal{V}^{m} $$

A $k$-valent multi-hypergraph $X$ is an \textit{infragraph} of a simple hypergraph $\mathcal{H}$, provided $\underline{X}$ is a subgraph of $\mathcal{H}$. Let $ \textup{Inf}^{m}\ev{\mathcal{H}}$ denote the set of infragraphs of $\mathcal{H}$ with $m\geq 1$ components, up to equivalence under $\approx$ (see Definition \ref{def:approx}). In other words, $\textup{Inf}^{m}\ev{\mathcal{H}}$ is the set of equivalence classes $[X]_{ \approx }$ of infragraphs $X$ with $m$ components. By an abuse of notation, we will refer to the elements $[X]_{\approx} \in \textup{Inf}^{m}\ev{\mathcal{H}}$ by $X$, omitting the brackets. In particular, $\textup{Inf}^{1}\ev{\mathcal{H}}$ is the set of isomorphism classes of connected infragraphs of $\mathcal{H}$. We define,
$$
\textup{Inf}\ev{\mathcal{H}} = \bigsqcup_{ m = 1 }^{\infty} \textup{Inf}^{m}\ev{\mathcal{H}}
$$

We further distinguish Veblen multi-hypergraphs and infragraphs in terms of the sizes of their edge-sets: For $d\geq 0$, let $ \mathcal{V}_d^{\infty} = \{ X \in \mathcal{V}^{\infty} : |E\ev{X}| = d \}$ and $ \textup{Inf}_d\ev{\mathcal{H}} := \{ X \in \textup{Inf}\ev{\mathcal{H}}: |E\ev{X}| = d \}$. 
\end{definition}

\subsection{Operations on the Trails of an Eulerian Digraph}
\label{sec:operations}
In this section, we focus our attention on Eulerian multi-graphs and Eulerian digraphs. We have some preliminary definitions:

Let $X = \ev{ V\ev{X}, E\ev{X}, \varphi }$ be a multi-graph of rank $k=2$. A \textit{walk} of $X$ is an alternating sequence \[\mathbf{w}=\ev{v_0,e_1,v_1,e_2,\ldots, v_{d-1}, e_{d}, v_d}\] of vertices $\ev{ v_i }_{i=0}^{d}$ and edges $\ev{ e_i }_{i=1}^{d}$, such that consecutive pairs of elements are incident. 

The \textit{length} of a walk $\mathbf{w}$, denoted $|\mathbf{w}|$, is the number of edges in $\mathbf{w}$. The vertices $v_0,v_d$ are said to be the \textit{initial vertex} and the \textit{terminal vertex} of $\mathbf{w}$, respectively. The \textit{internal vertices} of $\mathbf{w}$ are $\{v_i\}_{0<i<d}$. A \textit{closed walk} at $v_0$ is a walk where $v_0 = v_d$. Given a vertex $v_0 \in V\ev{X}$, then the unique walk of length $0$ at $v_0$ is the walk $\mathbf{w} = \ev{ v_0 }$. 

Let $\mathcal{U} \ev{X}$ and $\mathcal{U}_d \ev{X}$ denote the set of closed walks and the set of closed walks of length $d\geq 0$, respectively. Let $w_d := |\mathcal{U}_d \ev{X}|$ be the number of closed walks of $X$ of length $d\geq 0$. Given a vertex $u\in V\ev{X}$, we denote by $\mathcal{U}^{u}\ev{X}$ and $\mathcal{U}^{u}_d \ev{X}$, the set of closed walks of $X$ at $u$ and the set of closed walks of $X$ at $u$, of length $d\geq 0$, respectively. Let $w_d^{u} := |\mathcal{U}^{u}\ev{X}|$ be the number of closed walks of $X$ of length $d \geq 0$ at $u$. Given an edge $e\in E\ev{X}$ with $\varphi\ev{e} = \{ z_1,z_2 \}$, the set of \textit{closed walks ending at $e$} is defined as,
\begin{align*}
\mathcal{U}^{e}\ev{X} &= \{ \mathbf{w} = \ev{ v_0, e_1, v_1,\ldots, v_{ d-2 }, e_{ d-1 }, z_1, e, z_2 } : \mathbf{w} \in \mathcal{U}\ev{X} \} \\
&\sqcup \{ \mathbf{w} = \ev{ v_0, e_1, v_1,\ldots, v_{ d-2 }, e_{ d-1 }, z_2, e, z_1 } : \mathbf{w} \in \mathcal{U}\ev{X} \}
\end{align*}
In particular, $\{ \mathbf{w} \in \mathcal{U}^{e}\ev{X}: |\mathbf{w}| = 0 \} = \{ \ev{ z_1 }, \ev{ z_2 } \}$ and $\mathcal{U}^{e}\ev{X} \subseteq \mathcal{U}^{z_1}\ev{X} \sqcup \mathcal{U}^{z_2}\ev{X}$.

We denote by $E\ev{ \mathbf{w} } = \{ e_1, \ldots, e_d \}$ the set of edges and by $V\ev{\mathbf{w}}=\{v_0,\ldots,v_d\}$ the set of vertices of a walk $\mathbf{w}$. A walk $\mathbf{w}$ is called a \textit{trail}, if its edges are distinct. A closed trail $\mathbf{w}$ of $X$ is an \textit{Eulerian trail} when $E\ev{\mathbf{w}} =  E\ev{X}$. A \textit{path} is a trail without repeated vertices. A closed trail without repeated internal vertices is said to be a \textit{simple} closed trail. We denote by $\mathcal{T}\ev{X}$ and $\mathcal{W}\ev{X}$, the sets of closed trails and Eulerian (closed) trails of $X$, respectively. Given a vertex $u\in V\ev{X}$, we write $\mathcal{T}^{u}\ev{X}$ for the set of \textit{closed trails of $X$ at $u$} and $\mathcal{W}^{u}\ev{X}$ for the set of \textit{Eulerian trails of $X$ at $u$}. Given an edge $e\in E\ev{X}$, we denote by $ \mathcal{T}^{e}\ev{X}$, the set of \textit{closed trails of $X$ ending at $e$} and by $\mathcal{W}^{e}\ev{X}$, the set of \textit{Eulerian trails of $X$ ending at $e$}.

A walk of a digraph is defined similarly: A walk $\mathbf{w}=\ev{v_0,e_1,v_1,e_2,\ldots, v_{d-1}, e_{d}, v_d}$ must satisfy $\psi\ev{ e_i } = \ev{ v_{i-1}, v_{i} }$, for each $i = 1,\ldots, d$. One fine point is that, for a digraph $D$ and an edge $e\in E\ev{D}$ with $\psi\ev{e} = \ev{z_1,z_2}$, we have $\{ \mathbf{w} \in \mathcal{U}^{e}\ev{X}: |\mathbf{w}| = 0 \} = \{ \ev{ z_2 } \}$ and $\mathcal{U}^{e}\ev{D} \subseteq \mathcal{U}^{z_2}\ev{X} $. For convenience of notation, we sometimes omit vertices of a sequence defining a walk of $D$. 

\begin{remark}
Equivalently, a \textit{walk} of a multi-graph $X$ is a pair of functions $\ev{ v: [0,d] \rightarrow V\ev{X}, e: [1,d] \rightarrow E\ev{ X } }$ such that $\varphi\ev{ e_i } = \{ v_{i-1}, v_i\}$ for each $i=1,\ldots,d$. More generally, we will use any strictly increasing sequence $\{ j_i \}_{i=0}^{d}$ of positive integers as the index set, provided $\varphi\ev{ e_{j_i} } = \{ v_{j_{i-1}}, v_{j_i}\}$ is satisfied, for each $i=1,\ldots,d$.
\end{remark}

Now, we can restate Euler's Theorem (\cite[Theorem 3.5]{bondy}), with the new notation: A multi-graph $X$ is Eulerian, i.e. $\mathcal{W}\ev{X} \neq \emptyset$ if and only if it is a Veblen multi-graph (cf. Definition \ref{def:veblen_multi_hypergraph}).

Let $X$ be a multi-graph of rank $k=2$. We define the set of \textit{circuits} of $X$ as the quotient $\mathcal{Z}\ev{X} := \mathcal{T}\ev{X} /\rho$ of closed trails under the equivalence relation $\rho$, which is defined by cyclic permutation of the edges. More formally, $\rho$ is the transitive closure of the relation $\rho'$, defined by:
$$ \mathbf{w}_1 = \ev{ v_0, e_1, v_1, e_2,\ldots, e_{d-1}, v_{d-1}, e_{d}, v_0 } \ \ \rho' \ \  \mathbf{w}_2 = \ev{ v_{d-1}, e_{d}, v_0, e_1, v_1,e_2,\ldots, e_{d-1} ,v_{d-1} } $$

The set of \textit{Eulerian circuits} is defined as $\mathfrak{C}\ev{ X } := \mathcal{W}\ev{ X } / \rho$, the quotient of the set of Eulerian trails of $X$, up to the equivalence relation of cyclic permutation of edges. 

Whenever a circuit $[w]_\rho \in \mathcal{Z}\ev{ X } $ has no repeated vertices, we call it a \textit{cycle}. Let $\mathcal{B}\ev{ X }$ denote the set of cycles of a multi-graph $X$. A cycle of length $d=2$ is called a \textit{digon}. 

The circuits and cycles of a digraph is defined similarly. Although $\mathcal{B}\ev{D}$ and $\mathcal{B}\ev{X}$ are sets of different type of objects, it is understood from context whether the input is a multi digraph or an undirected multi-graph, so no confusion arises.

\begin{remark}
\label{rmk:walks_circuits_bij}
For a digraph $D$, there are natural bijections, yielding,
\begin{enumerate}
\item $|\mathfrak{C}\ev{D}| = |\mathcal{W}^{e}\ev{D}|$, for each edge $e\in E\ev{D}$. 
\item $|\mathfrak{C}\ev{D}| \cdot \deg_D^{-}\ev{u} = |\mathcal{W}^{u}\ev{D}|$ for each vertex $u\in V\ev{D}$.
\end{enumerate}
\end{remark}
\begin{example}

\phantom{a}
\begin{figure}[ht]
\centering
\includegraphics[width=0.32\linewidth]{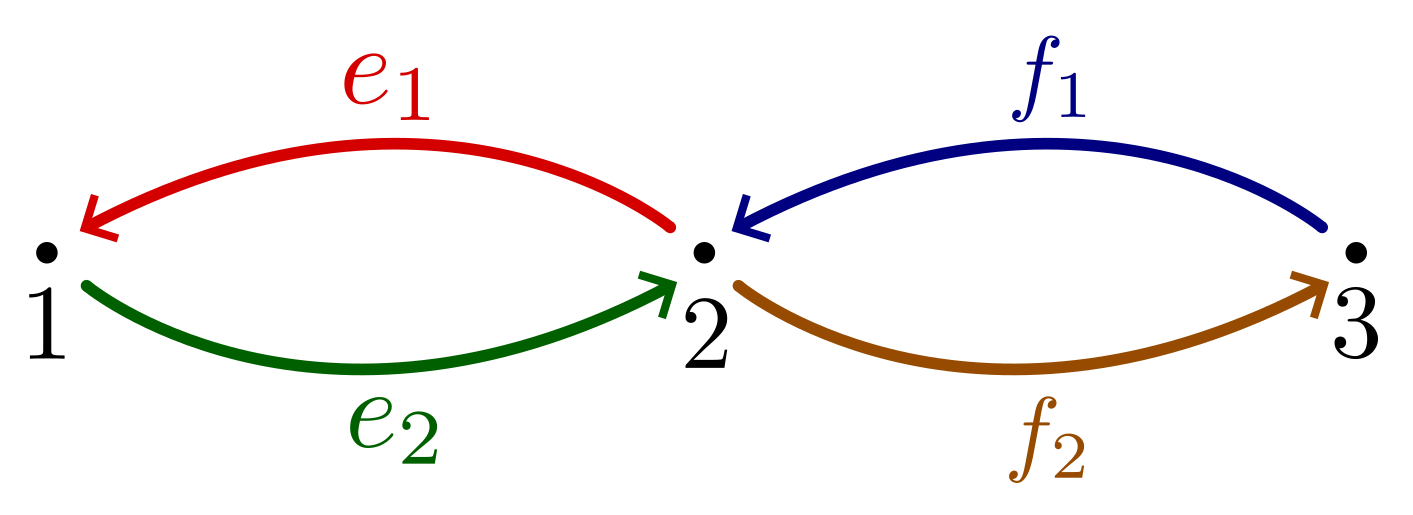}
\caption{A digraph $D$}
\label{fig:digraphexample_1}
\end{figure}
\begin{enumerate}
\item As an example, we consider the digraph in Figure \ref{fig:digraphexample_1}, and list its Eulerian trails, Eulerian circuits and cycles.
\begin{align*} 
& \mathcal{W}\ev{ D } = \{ ({\color{red!80!black} e_1 }, {\color{green!40!black} e_2 }, {\color{brown!50!black} f_2 }, {\color{blue!60!black} f_1 }) , ({\color{brown!50!black} f_2 }, {\color{blue!60!black} f_1 },{\color{red!80!black} e_1 }, {\color{green!40!black} e_2 }), ({\color{green!40!black} e_2 },{\color{brown!50!black} f_2 },{\color{blue!60!black} f_1 },{\color{red!80!black} e_1 }), ({\color{blue!60!black} f_1 },{\color{red!80!black} e_1 },{\color{green!40!black} e_2 },{\color{brown!50!black} f_2 }) \} \\ 
& \mathfrak{C}\ev{ D } = \{ \  [ ({\color{blue!60!black} f_1 } , {\color{red!80!black} e_1 }, {\color{green!40!black} e_2 },  {\color{brown!50!black} f_2 }) ]_\rho \ \} \\
& \mathcal{B}\ev{ D } = \{ \ [ \ev{  {\color{red!80!black} e_1 }, {\color{green!40!black} e_2 } } ]_\rho, [ \ev{ {\color{blue!60!black} f_1 }, {\color{brown!50!black} f_2 } } ]_\rho \ \}
\end{align*}
\item Consider the digraph $D$ in Figure \ref{fig:digraphexample_2}. The Eulerian trails at the vertex $2$, the Eulerian circuits and the cycles are listed below.
\begin{figure}[ht]
\centering
\includegraphics[width=0.19\linewidth]{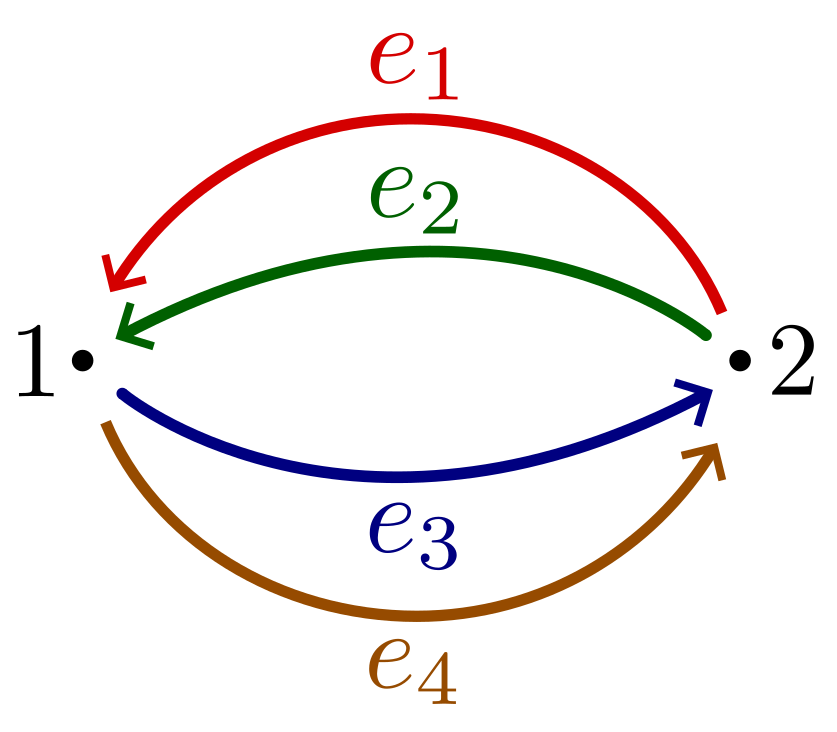}
\caption{A digraph $D$}
\label{fig:digraphexample_2}
\end{figure}
\begin{align*}
& \mathcal{W}^{2}\ev{ D } = \{ 
\ev{ {\color{red!80!black} e_1 }  , {\color{blue!60!black} e_3 } , {\color{green!40!black} e_2 },  {\color{brown!50!black} e_4 } }, 
\ev{ {\color{green!40!black} e_2 }, {\color{blue!60!black} e_3 } , {\color{red!80!black} e_1 }  ,  {\color{brown!50!black} e_4 } }, 
\ev{ {\color{red!80!black} e_1 }  , {\color{brown!50!black} e_4 }, {\color{green!40!black} e_2 },  {\color{blue!60!black} e_3 }  }, 
\ev{ {\color{green!40!black} e_2 }, {\color{brown!50!black} e_4 }, {\color{red!80!black} e_1 }  ,  {\color{blue!60!black} e_3 }  } 
\} \\ 
& \mathfrak{C}\ev{ D } = \{ \ [ \ev{ {\color{red!80!black} e_1 }  , {\color{blue!60!black} e_3 } , {\color{green!40!black} e_2 },  {\color{brown!50!black} e_4 } } ]_\rho, [ \ev{ {\color{red!80!black} e_1 }  , {\color{brown!50!black} e_4 }, {\color{green!40!black} e_2 },  {\color{blue!60!black} e_3 } } ]_\rho \ \} \\ 
& \mathcal{B}\ev{ D } = \{ \ [ \ev{  {\color{red!80!black} e_1 }  , {\color{blue!60!black} e_3 } } ]_\rho, [ \ev{  {\color{red!80!black} e_1 }  , {\color{brown!50!black} e_4 } } ]_\rho, [ \ev{  {\color{green!40!black} e_2 }, \ {\color{blue!60!black} e_3 } } ]_\rho, [ \ev{  {\color{green!40!black} e_2 }, \ {\color{brown!50!black} e_4 } } ]_\rho \ \}
\end{align*}
\end{enumerate}
\end{example}

\subsection{The Heaps of Pieces Framework}
To use the theory of Heaps of Pieces (\cite{krattenthaler,viennot}), we reformulate some definitions and theorems. Let $\ev{ \Omega, \leq }$ be a poset. 
\begin{enumerate}
\item Given two elements $x,y\in \Omega$, then $y$ \textit{covers} $x$, denoted $x\lessdot y$, provided $x < y$ and there is no $z\in \Omega$ such that $x<z<y$.	
\item The \textit{down-set} of $a$ is the set: $ \downset[\Omega]{a} := \{ b\in \Omega: b \leq_\Omega a \}$. Let $ \langle a \rangle_{\Omega} := \downset[\Omega]{a} \setminus \{a\} = \{ b\in \Omega: b <_\Omega a \}$ be the set of elements strictly below $a$. 
\item The \textit{up-set} of $a$ is the set: $ \upset[\Omega]{a} := \{ b\in \Omega: b \geq_\Omega a \}$.
\item Let $\mathcal{M}\ev{ \Omega }$ denote the set of maximal elements of $\Omega$. 
\end{enumerate}

The following is a restatement of \cite[Definition 2.1, p.~324]{viennot}:

\begin{definition}
\label{def:heap_viennot}
Let $\mathcal{B}$ be a set (of pieces) with a symmetric and reflexive binary (concurrence) relation $\mathcal{R}$. Then, a \textit{heap} is a triple
$$ H = \ev{ \Omega , \leq, \ell }$$
where $\ev{ \Omega , \leq }$ is a poset, $\ell: \Omega \rightarrow \mathcal{B}$ is a function ($\ell$ labels the elements of $\Omega$ by elements of $\mathcal{B}$) such that for all $x,y\in \Omega$,
\begin{enumerate}
\item If $\ell\ev{x} \mathcal{R} \ell\ev{y}$ then $x\leq y$ or $y \leq x$.
\item If $x \lessdot y$, then $\ell\ev{x} \mathcal{R} \ell\ev{y}$.
\end{enumerate} 
Equivalently (\cite[Definition 2.5]{krattenthaler}, \cite[p.~325]{viennot}), a \textit{heap} is a triple $ H = \ev{ \Omega , \leq, \ell }$, where $\ell: \Omega \rightarrow \mathcal{B}$ is a function and there exists a set of relations $\preceq$ such that, \textit{i)} For all $x,y\in \Omega$, $\ell\ev{x} \mathcal{R} \ell\ev{y}$ if and only if $x\preceq y$ or $y \preceq x$, and \textit{ii)} $\leq$ is the partial order obtained by taking the transitive closure of $\preceq$. 
\end{definition}

\begin{remark}
To see that the two definitions given above are equivalent, let $H = \ev{ \Omega , \leq, \ell }$ be a heap according to the first definition. Then, define $x\preceq y$ if and only if $x\leq y$ and $\ell\ev{x} \mathcal{R} \ell\ev{y}$, and note that $\leq $ is the transitive closure of $\preceq$, and so, $H$ is a heap according to the second definition. Conversely, if $H' = \ev{ \Omega' , \leq', \ell' }$ is a heap according to the second definition, then $\leq'$ satisfies the conditions of the first definition. 
\end{remark}

Let $\mathbbm{h}\ev{\mathcal{B},\mathcal{R}}$ be the set of heaps labeled by the set of pieces $\mathcal{B}$, with a given concurrence relation $\mathcal{R}$. Whenever the concurrence relation is clear from context, we suppress the argument $\mathcal{R}$ and shortly write $\mathbbm{h}\ev{\mathcal{B}}$ instead of $\mathbbm{h}\ev{\mathcal{B},\mathcal{R}}$. Whenever $\leq$ and $\ell$ are clear from context, we refer to a heap $H = \ev{ \Omega, \leq ,\ell}$ by the underlying ground set $\Omega$. 


We include a restatement of \cite[Definition 2.5, p.~4]{krattenthaler}, for the composition of heaps: Let $H_1 = (\Omega_1, \leq_1, \ell_1 : \Omega_1 \rightarrow \mathcal{B} ) , H_2 = (\Omega_2, \leq_2, \ell_2: \Omega_2 \rightarrow \mathcal{B} ) \in \mathbbm{h}\ev{ \mathcal{B}, \mathcal{R}}$ be heaps, such that $\Omega_1 \cap \Omega_2 = \emptyset$. Then
the composition of $H_1$ and $H_2$, $H_1 \circ H_2$, is the heap $(\Omega_1 \sqcup \Omega_2, \leq_3, \ell_3)$ with
\begin{enumerate}
\item[1)] $\ell_3\ev{ v } = \begin{cases} \ell_1\ev{ v } & \text{ if } v\in \Omega_1 \\  \ell_2\ev{ v } & \text{ if } v\in \Omega_2 \end{cases}$ 
\item[2)] The partial order $\ell_3$ on $ \Omega_1 \sqcup \Omega_2$ is the transitive closure of
\begin{enumerate}
\item[a)] $v_1 \leq_3 v_2$ if $v_1,v_2 \in \Omega_1$ and $v_1 \leq_1 v_2$,
\item[b)] $v_1 \leq_3 v_2$ if $v_1,v_2 \in \Omega_2$ and $v_1 \leq_2 v_2$,
\item[c)] $v_1 \leq_3 v_2$ if $v_1 \in \Omega_1$, $v_2 \in \Omega_2$ and $\ell_1(v_1) \mathcal{R} \ell_2(v_2)$.
\end{enumerate}
\end{enumerate}

\begin{remark}
Since there can be more than one element in a heap, labelled by the same piece, it is ambiguous to refer to elements of a heap by their labels. However, given a (not necessarily distinct) sequence of pieces $\beta_1,\ldots,\beta_m$, then the heap 
$$ \beta_1\circ \ldots \circ \beta_m $$
is well-defined. 
\end{remark}

For a heap $H = \ev{ \Omega, \leq, \ell}$, let $|H|$ be the number of elements in the poset $\Omega$ and let $\mathcal{M}\ev{H} := \mathcal{M}\ev{ \Omega }$ be the maximal elements in the poset.  
\begin{remark}

\label{rmk_2}

Given heaps $H_1$ and $H_2$, we have $\mathcal{M}\ev{H_1 \circ H_2 }\subseteq \mathcal{M}\ev{H_1}\cup \mathcal{M}\ev{H_2}$.

\end{remark}
Let $\mathbbm{t}\ev{\mathcal{B},\mathcal{R}}$ be the set of \textit{trivial heaps}, i.e., heaps where any pair of pieces have non-concurrent labels. Let $\mathbbm{p}\ev{ \mathcal{B},\mathcal{R}}$ be the set of \textit{pyramids}, i.e., heaps with a unique maximal piece. 

\begin{definition}[Heaps of Pieces on a Digraph]
\label{def:heaps_of_a_digraph}
Let $D$ be a multi-digraph. We define the set of pieces as the finite set $\mathcal{B}\ev{D}$, that is, the cycles of $D$. Two pieces $\beta_1$ and $\beta_2$ are \textit{concurrent} (denoted $\beta_1 \mathcal{R} \beta_2$) provided they share a vertex. Whenever it is clear from context, we shorten the notation and write $\mathbbm{h}\ev{ D }$ instead of $\mathbbm{h}\ev{ \mathcal{B}\ev{D}, \mathcal{R} }$, for the set of heaps of a multi-digraph $D$. Fix a vertex $u\in V\ev{D}$ and an edge $e\in E\ev{D}$.
\begin{enumerate}
\item Define the set of pieces containing $u$:
$$
\mathcal{B}^u = \{ \beta \in \mathcal{B}\ev{D}: u\in V\ev{\beta} \}
$$

\item The set of pieces containing the edge $e$:
$$
\mathcal{B}^e = \{ \beta\in \mathcal{B}\ev{D}: e\in E\ev{\beta} \}
$$

\item For a piece $\beta \in \mathcal{B}\ev{D}$, let $E\ev{\beta}$ be the edge-set of $\beta$. A pyramid $P \in \mathbbm{p}\ev{D}$ is called a \textit{decomposition pyramid}, provided $E\ev{D} =  \bigsqcup_{ \beta \in P } E\ev{\beta} $ is a partition of the edges of $D$ into cycles. Let $\mymathbb{dp}\ev{ D }$ be the set of decomposition pyramids of a connected digraph $D$. 
\item Define the set
$$
\mathbbm{p}^{e}\ev{ D } = \{ P\in \mathbbm{p}\ev{ D } : \mathcal{M}\ev{ P } = \{\beta\} \text{ and } \beta\in \mathcal{B}^e \}
$$
of pyramids with maximal piece containing $e$. Let $\mymathbb{dp}^{e}\ev{D} $ be the set of decomposition pyramids with maximal piece containing $e$. 
\item The pyramids $\mathbbm{p}^{u}\ev{D }$ with maximal piece containing $u$, and the decomposition pyramids $\mymathbb{dp}^{u}\ev{D }$ with maximal piece containing $u$ are defined similarly. 
\end{enumerate}
\end{definition}

\begin{remark}
By the definition of $\mathcal{B}^e$, we can not have any heap $H$ with $\mathcal{M}\ev{H}\subseteq \mathcal{B}^e$ and $|\mathcal{M}\ev{H}|\geq 2$. Hence, it follows that 
$$
\{H\in \mathbbm{h}\ev{ D } : \mathcal{M}\ev{H}\subseteq \mathcal{B}^e \} = \mathbbm{p}^{e}\ev{D }
$$
Similarly, we have 
$$
\{H\in \mathbbm{h}\ev{ D } : \mathcal{M}\ev{H}\subseteq \mathcal{B}^u \} = \mathbbm{p}^{u}\ev{D }
$$ 
\end{remark}

\begin{lemma}
\label{lem:push_down}
Given a heap $H = \ev{ \Omega,\leq, \ell}$ and an element $\omega\in \Omega$, then the down-set $\downset[H]{\omega}$ is a pyramid. Let $H_1:=\ev{ \downset[H]{\omega}, \leq_{H_1}, \ell_{H_1}}$ and $H_2=\ev{ H\setminus \downset[H]{\omega}, \leq_{H_2}, \ell_{H_2} }$ be elements of $\mymathbb{h}\ev{\mathcal{B},\mathcal{R}}$, where $\leq_{H_i}$ and $\ell_{H_i}$ are restrictions of $\leq$ and $\ell$ to $H_i$, for $i=1,2$. Then, we have $\mathcal{M}\ev{ H_2 } = \mathcal{M} \ev{ H } \setminus \{\omega\}$ and
$$H_1\circ H_2 = H$$
\end{lemma}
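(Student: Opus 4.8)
The plan is to establish the three claims in sequence, after first checking that $H_1$ and $H_2$ really are heaps. The one structural fact driving everything is that $\downset[H]{\omega}$ is a down-set of $(\Omega,\leq)$ and its complement $\Omega\setminus\downset[H]{\omega}$ is the corresponding up-set. From this, a covering relation $x\lessdot y$ internal to $H_1$ (resp.\ to $H_2$) is automatically a covering relation in $H$: any $z$ with $x<z<y$ in $H$ would lie in $\downset[H]{\omega}$ by down-closure (resp.\ in the complement by up-closure), contradicting the covering. Both heap axioms of \Cref{def:heap_viennot} for $H_1$ and $H_2$ then follow from those for $H$ --- axiom~(1) because restricting $\leq$ does not lose any comparabilities among elements of $H_i$, axiom~(2) because covers are preserved. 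That $H_1$ is a pyramid is then immediate: every element of $\downset[H]{\omega}$ is $\leq\omega$, so $\omega$ is the greatest, hence the unique maximal, element of $H_1$, i.e.\ $\mathcal{M}(H_1)=\{\omega\}$.

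Next, for $\mathcal{M}(H_2)=\mathcal{M}(H)\setminus\{\omega\}$: if $x$ is maximal in $H_2$ then it is maximal in $H$, since any $y>_H x$ would lie in the up-set $H_2$; and $x\neq\omega$ as $\omega\in H_1$. Conversely, if $x\in\mathcal{M}(H)$ and $x\neq\omega$, then $x\notin\downset[H]{\omega}$ (else $x\leq\omega$ would force $x=\omega$ by maximality), so $x\in H_2$, and $x$ remains maximal there because $\leq_{H_2}$ is a restriction of $\leq$. For the final identity $H_1\circ H_2=H$, the composition has ground set $\downset[H]{\omega}\sqcup(\Omega\setminus\downset[H]{\omega})=\Omega$ and labelling agreeing with $\ell$, so everything reduces to showing the order $\leq_3$ produced by the composition equals $\leq$. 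The inclusion $\leq_3\subseteq\leq$ is easy: the generators of types~(a) and~(b) are instances of $\leq$, and for a type~(c) generator with $v_1\in\Omega_1$, $v_2\in\Omega_2$, $\ell(v_1)\mathcal{R}\ell(v_2)$, axiom~(1) in $H$ gives $v_1\leq v_2$ or $v_2\leq v_1$, the latter being impossible since $v_2\leq v_1\leq\omega$ would put $v_2\in\downset[H]{\omega}=\Omega_1$; transitivity of $\leq$ then closes this inclusion.

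The reverse inclusion $\leq\subseteq\leq_3$ is the crux. I would invoke the characterization from the remark following \Cref{def:heap_viennot}: $\leq$ is the transitive closure of the relation $\preceq$ defined by $x\preceq y\iff x\leq y\text{ and }\ell(x)\mathcal{R}\ell(y)$. Hence any $x\leq y$ is witnessed by a finite chain $x=z_0\preceq z_1\preceq\cdots\preceq z_m=y$, and it suffices to show $z_i\leq_3 z_{i+1}$ for each step. Only three location patterns for $(z_i,z_{i+1})$ are possible: both in $\Omega_1$, handled by~(a); both in $\Omega_2$, handled by~(b); or $z_i\in\Omega_1$, $z_{i+1}\in\Omega_2$, handled by~(c) since $\ell(z_i)\mathcal{R}\ell(z_{i+1})$. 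The remaining pattern $z_i\in\Omega_2$, $z_{i+1}\in\Omega_1$ cannot occur, because $z_i\leq z_{i+1}\leq\omega$ would force $z_i\in\downset[H]{\omega}=\Omega_1$. Transitivity of $\leq_3$ then yields $x\leq_3 y$. The main obstacle, and the only place real care is needed, is precisely this step: one must avoid reasoning about $\leq$ directly and instead break each relation into $\mathcal{R}$-adjacent comparable links so that the generators~(a)--(c) apply, with the down-closure of $\downset[H]{\omega}$ both supplying the cross-link and excluding the backwards one; the rest is routine bookkeeping with down-sets, up-sets, and maximal elements.
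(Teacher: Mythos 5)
Your proposal is correct and follows essentially the same route as the paper's proof: both establish $\mathcal{M}(H_2)=\mathcal{M}(H)\setminus\{\omega\}$ by double inclusion using up-closedness of the complement, and prove $\leq_3\ =\ \leq$ by double inclusion, with down-closedness of $\downset[H]{\omega}$ excluding the backwards cross-relation and heap axiom (1) plus concurrence orienting the cross pairs. The only differences are cosmetic: you split $\leq$ into $\preceq$-links via the remark after \Cref{def:heap_viennot} and check the generators of $\leq_3$ directly, where the paper runs inductions along cover chains, and you additionally verify explicitly that $H_1,H_2$ are heaps and that $H_1$ is a pyramid, steps the paper leaves implicit.
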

See \hyperlink{proof:downsetrecomposed}{proof} on p.~\pageref*{proof:downset_recomposed}. 

\subsection{Bijections between Pyramids and Trails/Walks}
\label{sec:bijections_pyramids_walks}
Let $D$ be an Eulerian multi-digraph. Let $u\in V\ev{D}$ be any vertex, and let $e\in E\ev{D}$ be any edge.

We will construct a bijection between Eulerian trails of $D$ at $u$ and decomposition pyramids of $\mathcal{B}\ev{D}$ with the maximal piece containing $u$. Likewise, Eulerian trails of $D$ ending at $e$ are in bijection with decomposition pyramids of $\mathcal{B}\ev{D}$ with maximal piece containing $e$. 

First, we have some preliminary definitions and results on closed trails.

Let $\mathbf{w}=\ev{v_0,e_1,v_1,e_2,\ldots, v_d, e_d, v_0}$ be a closed trail at $u$, where $v_0=u$. Let $X\subseteq V\ev{\mathbf{w}}=\{v_0,\ldots,v_d\}$ be a subset of the vertices of $\mathbf{w}$. Let $j$ be the minimal index such that $v_j\in X$. Then, we say that $v_j$ is the \textit{first vertex $\mathbf{w}$ visits in $X$.} 

Given two not-necessarily-closed trails $\mathbf{w}_1,\mathbf{w}_2$, assume that the terminal vertex of $\mathbf{w}_1$ is the same as the initial vertex of $\mathbf{w}_2$:
$$ \mathbf{w}_1 = \ev{ v_0, e_1,v_1,\ldots, v_{m-1},e_{m-1},v_m} \text{ and } \mathbf{w}_2 = \ev{ v_0', e_0', v_{1}', \ldots, v_{r-1}',e_{r-1}',v_r'}$$
where, $v_m = v_0'$ and $0\leq m, r$. Then, the \textit{concatenation} of $\mathbf{w}_1$ and $\mathbf{w}_2$, denoted $\mathbf{w}_1 \mathbf{w}_2$, is the trail:
$$ \ev{ v_0, e_1,v_1,\ldots, v_{m-1},e_{m-1},v_m, e_0', v_{1}', \ldots, v_{r-1}',e_{r-1}',v_r'} $$
Given a trail $\mathbf{w} = \ev{ v_0,e_1,\ldots,e_d,v_d }$, if $v_i = v_j$ for some $0\leq i < j \leq d$, then $\mathbf{w}_1 := v_i \mathbf{w} v_j$ is a closed \textit{subtrail} of $\mathbf{w}$. Then, we obtain another closed subtrail:
$$
\mathbf{w} \setminus \mathbf{w}_1 := \ev{ v_0,e_1, \ldots, e_i, v_i, e_{j+1},v_{j+1}, \ldots, e_d,v_d }
$$
If $\mathbf{w}_2$ is a closed subtrail of $\mathbf{w}\setminus \mathbf{w}_1$, then we use the notation $\mathbf{w}\setminus \ev{ \mathbf{w}_1 \cup \mathbf{w}_2 } := \ev{ \mathbf{w} \setminus \mathbf{w}_1 }\setminus \mathbf{w}_2 $. 

Let $\mathbf{w}_1, \mathbf{w}_2$ be closed trails at $v_0$ and $v_0'$, respectively:
$$ \mathbf{w}_1 = \ev{ v_0, e_1,v_1,\ldots, v_{m-1},e_{m-1},v_m} \text{ and } \mathbf{w}_2 = \ev{ v_0', e_0', v_{1}', \ldots, v_{r-1}',e_{r-1}',v_r'}$$
where $E\ev{ \mathbf{w}_1 } \cap E\ev{ \mathbf{w}_2 } = \emptyset$ and $v_0' = v_j \in V\ev{\mathbf{w}_1}$ for some $j\geq 0$, such that $V\ev{ \mathbf{w}_1 } \cap \{ v_i \}_{i=0}^{j-1} = \emptyset$ (in particular, $v_j$ is the first vertex $\mathbf{w}_2$ visits in $V\ev{\mathbf{w}_1} \cap V\ev{\mathbf{w}_2}$). Then, the \textit{insertion} of $\mathbf{w}_1$ into $\mathbf{w}_2$ is defined as the concatenation:
$$
\mathbf{w}_1\cdot \mathbf{w}_2 := (v_0 \mathbf{w}_2 v_j) \ev{ v_0' \mathbf{w}_1 v_r' }  \ev{ v_j \mathbf{w}_2 v_d }
$$ 
In particular, when $j=0$, i.e., both $\mathbf{w}_1$ and $\mathbf{w}_2$ are closed trails at the same vertex $v_0 = v_0'$, then $\mathbf{w}_1 \cdot \mathbf{w}_2 = \mathbf{w}_1 \mathbf{w}_2$ is just the concatenation of $\mathbf{w}_1$ and $\mathbf{w}_2$, as defined above.
\begin{example}
\label{ex:non_assoc}
Consider the digraph and the simple closed trails shown in Figure \ref{fig:nonassocexample}.
\begin{figure}[ht]
~ \qquad \qquad \qquad  \qquad  \qquad \qquad 
\begin{subfigure}{0.17\textwidth}%
\centering
\raisebox{-0.7cm}{ \includegraphics[width=\linewidth]{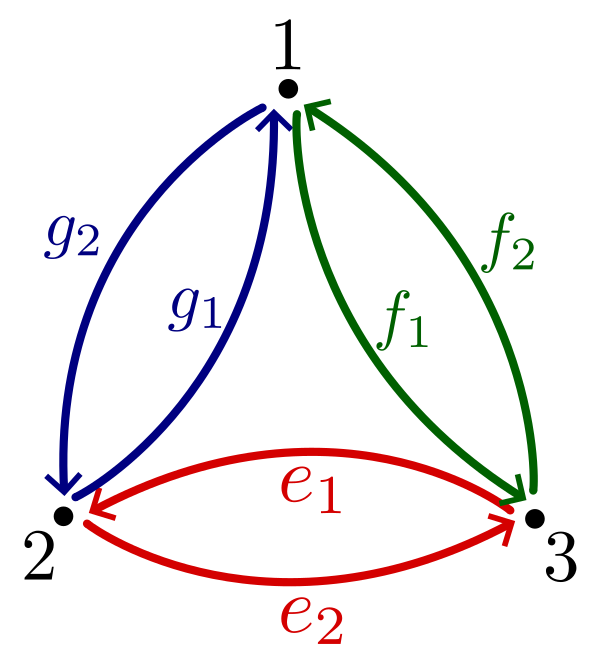} } 
\end{subfigure}
~ \quad
\begin{subfigure}{0.3\textwidth}%
\centering
\begin{align*}
& { \color{blue!60!black} \mathbf{w}_1 } = \ev{ 2, { \color{blue!60!black} g_1 }, 1, { \color{blue!60!black} g_2 }, 2} \\ 
& { \color{green!40!black} \mathbf{w}_2 }   = \ev{ 3, { \color{green!40!black} f_2 }, 1, { \color{green!40!black} f_1 }, 3} \\ 
& { \color{red!80!black} \mathbf{w}_3 } = \ev{ 2, { \color{red!80!black} e_2 }, 3, { \color{red!80!black} e_1 }, 2} 
\end{align*} 
\end{subfigure}
\caption{A digraph $D$ and some of its simple closed trails.}
\label{fig:nonassocexample}
\end{figure}

Then, we have 
$$
{ \color{blue!60!black} \mathbf{w}_1 } \cdot \ev{ { \color{green!40!black} \mathbf{w}_2 } \cdot {\color{red!80!black} \mathbf{w}_3 } } = { \color{blue!60!black} \mathbf{w}_1 } \cdot \ev{ 2,  { \color{red!80!black} e_2 } , 3, { \color{green!40!black} f_2 } , 1, { \color{green!40!black} f_1 }, 3,  { \color{red!80!black} e_1 }, 2} = \ev{ 2, { \color{blue!60!black} g_1 } , 1, { \color{blue!60!black} g_2 }, 2,  { \color{red!80!black} e_2 } , 3, { \color{green!40!black} f_2 } , 1, { \color{green!40!black} f_1 }, 3,  { \color{red!80!black} e_1 }, 2 }
$$ 
whereas $( \mathbf{w}_1 \cdot \mathbf{w}_2 ) \cdot \mathbf{w}_3$ is undefined, since $ \mathbf{w}_1 \cdot \mathbf{w}_2 $ is undefined, as the conditions of the insertion operation are not satisfied. 
\end{example}

\begin{remark}
As Example \ref{ex:non_assoc} demonstrates, the insertion operation is not associative. We will use the short notation $\mathbf{w}_1 \cdots \mathbf{w}_m$ to refer to the trail $\mathbf{w}_1 \cdot \ev{ \mathbf{w}_2 \cdot ( \cdots ( \mathbf{w}_{m-2} \cdot \ev{ \mathbf{w}_{m-1} \cdot \mathbf{w}_m } }$, for $m\geq 3$, whenever it is defined.
\end{remark}

Given a cycle $\beta\in \mathcal{B}\ev{D}$, and a vertex $v\in V\ev{ \beta }$, let $ \beta\ev{v} $ be the unique simple closed trail $\mathbf{w}$ at $v$ such that $[\mathbf{w}]_{\rho} = \beta$.

\begin{definition}[The insertion $\beta \cdot \mathbf{w}$ of a cycle $\beta$ into $\mathbf{w}$]
\label{def:insertion_beta}
Let $\mathbf{w} = \ev{ v_0, e_1,v_1,\ldots, v_{m-1},e_{m-1},v_m}$ be a closed trail at $v_0$ and let $\beta$ be a cycle such that $V\ev{\beta}\cap V\ev{\mathbf{w}} \neq \emptyset$ and $E\ev{ \beta } \cap E\ev{ \mathbf{w} } = \emptyset$. Let $j$ be minimal such that $v_j \in V\ev{ \beta}$. Then, the \textit{insertion} of $\beta$ into $\mathbf{w}$, is defined as the insertion of the trail $\beta \ev{ v_j } $ into $\mathbf{w}$:
$$\beta \cdot \mathbf{w} :=  \beta\ev{ v_j }  \cdot \mathbf{w}$$
\end{definition}

Let $D$ be a multi-digraph. From each closed trail $\mathbf{w} \in \mathcal{T}\ev{D}$, we will construct a unique sequence of cycles, as follows: First, we express $\mathbf{w}$ uniquely as a concatenation $\alpha \mathbf{w}_1 \gamma$ where,
\begin{itemize}
\item[i)] $\alpha$ is a path starting at $u$ (possibly of length zero); and 
\item[ii)] $\mathbf{w}_1$ is the simple closed trail, the internal vertices of which are distinct and disjoint from $\alpha$. 
\end{itemize}
We say that $\mathbf{w}_1$ is the \textsl{first simple closed subtrail} of $\mathbf{w}$ and $\alpha \gamma$ is the \textsl{remainder}. Note that the remainder $\alpha\gamma$ is again a closed trail at $u$, or a single vertex, i.e., a trail of length $0$. If $\alpha \gamma$ has length $>0$, we may again extract the first simple closed subtrail of $\alpha\gamma$, say $\mathbf{w}_2$. In this way, we iteratively decompose $\mathbf{w}$ into simple closed trails, $\{ \mathbf{w}_i \}_{i=1}^{m}$. Now, we provide a formal definition, to capture the idea exactly.
\begin{figure}[ht]
\centering
\includegraphics[width=0.6\linewidth]{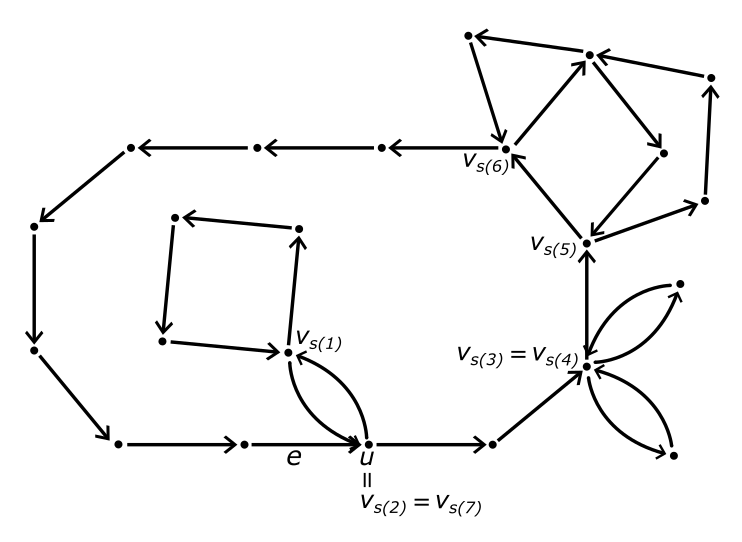}
\caption{\textit{An example of a closed trail.}}
\label{fig:walkspyramids}
\end{figure}
\begin{definition}[Cycle sequence of a closed trail]
\label{def:cycle_seq}
Let $\mathbf{w} =  \ev{ v_0, e_1,v_1,\ldots, v_{d-1},e_{d-1},v_d} $ be a closed trail. We define an increasing sequence $0\leq t\ev{1} < \ldots < t\ev{m} = d$ of indices as follows: Let $\widetilde{\mathbf{w}}_{1} = \mathbf{w}$. For $j \geq 1$, let $t\ev{j}$ be the minimum repeated index of $\widetilde{\mathbf{w}}_{j}$. Let $s\ev{j}$ be the unique index of $\widetilde{\mathbf{w}}_{j}$ such that $v_{s\ev{j}} = v_{t\ev{j}}$ (cf. Figure \ref{fig:walkspyramids}). Then, let $\mathbf{w}_{j} = v_{s\ev{j}} \widetilde{\mathbf{w}}_{j} v_{t\ev{j}}$ and $\widetilde{\mathbf{w}}_{j+1}:= \widetilde{\mathbf{w}}_{j} \setminus \mathbf{w}_j $. This gives a sequence of subtrails: 
\begin{align*}
\widetilde{\mathbf{w}}_{1} &:= \mathbf{w} 										&&	\hspace{-2.5cm}	&&		\hspace{-2.5cm}		= \alpha_1 \mathbf{w}_{1} \gamma_1 \\ 
\widetilde{\mathbf{w}}_{2} &:= \mathbf{w} \setminus \mathbf{w}_1 											&&\hspace{-2.5cm} = \alpha_1 \gamma_1  && \hspace{-2.5cm} = \alpha_2 \mathbf{w}_{2} \gamma_2 \\ 
& \vdots && \hspace{-2.5cm} \quad \vdots && \hspace{-2.5cm} \quad \vdots  \\
\widetilde{\mathbf{w}}_{m-1} &:= \mathbf{w} \setminus \ev{ \mathbf{w}_1 \cup \ldots \cup \mathbf{w}_{m-2} } &&\hspace{-2.5cm} =  \alpha_{m-2}\gamma_{m-2}  && \hspace{-2.5cm} = \alpha_{m-1} \mathbf{w}_{m-1} \gamma_{m-1} \\ 
\widetilde{\mathbf{w}}_{m} &:= \mathbf{w} \setminus \ev{ \mathbf{w}_1 \cup \ldots \cup \mathbf{w}_{m-1} } 	&&\hspace{-2.5cm} = \alpha_{m-1} \gamma_{m-1} && \hspace{-2.5cm} = \mathbf{w}_{m}  
\end{align*}
where:
\begin{equation}
\text{ $\alpha_i = v_0 \widetilde{\mathbf{w}}_{i}  v_{s\ev{ i }}$ is a path and $\mathbf{w}_i := v_{s\ev{i}} \widetilde{\mathbf{w}}_{i} v_{t\ev{i}} $ is a simple closed trail, for each $i = 1,\ldots,m $.}
\label{eqn:uniqueness_property}
\end{equation}
and in particular, $\alpha_m $ is the trail of length $0$ at $v_0$. Define $\beta_i := [\mathbf{w}_i]_\rho$, for $i=1,\ldots,m$. This yields a sequence of cycles $(\beta_1, \ldots, \beta_m )$, called the \textsl{cycle sequence} of $\mathbf{w}$. 
\end{definition}

We define a function $ \textup{cs}: \mathcal{T}\ev{D} \rightarrow \mathcal{B}\ev{D}^{*} $, where 
$$
\mathbf{w}\mapsto \begin{cases} \emptyset & \text{ if } |\mathbf{w}| = 0 \\ (\beta_1, \beta_2,\ldots, \beta_{m}) & \text{ otherwise}
\end{cases}
$$
mapping each closed trail to its cycle sequence.

\begin{remark}
\label{rmk:can_not_reach_back}
By definition of a cycle sequence, we satisfy the following: For each $r=2,\ldots,m$,
$$
\textup{cs}\ev{ \widetilde{\mathbf{w}}_{r} } = \ev{\beta_r,\ldots, \beta_m} \text{ and } \beta_1 \cdots \beta_{r-1} \cdot \widetilde{\mathbf{w}}_{r} = \mathbf{w} 
$$
Furthermore, the simple closed subtrails $\{ \mathbf{w}_{i} \}_{i=1}^{m}$ uniquely satisfy the property (\ref{eqn:uniqueness_property}), i.e., for each $i=1,\ldots,m$,
$$
\{ v_a \widetilde{\mathbf{w}}_{i} v_b: v_0 \widetilde{\mathbf{w}}_{i} v_a \text{ is a path and } v_a \widetilde{\mathbf{w}}_{i} v_b \text{ is simple closed and } b \geq a \geq 0 \} =  \{ \mathbf{w}_i \}
$$
\end{remark}

\begin{lemma}
\label{lem:cycle_seq}

\phantom{a}
Let $D$ be a multi-digraph. Let $e\in E\ev{D}$ be a fixed edge. Then, the mapping $\textup{cs}: \mathcal{T}^{e}\ev{D} \rightarrow \mathcal{B}\ev{D}^{*}$ is an injection, and moreover, $e\in E\ev{ \beta_m }$.
\end{lemma}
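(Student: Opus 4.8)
The plan is to establish both assertions together by induction on the length $d=|\mathbf{w}|$, letting the membership $e\in E(\beta_m)$ supply exactly what the injectivity argument needs. Fix $\psi(e)=(z_1,z_2)$. The observation that drives everything is that any $\mathbf{w}\in\mathcal{T}^e(D)$ of positive length has the form $\mathbf{w}=(z_2,e_1,v_1,\ldots,z_1,e,z_2)$: the edge $e$ is its last edge, so its base vertex $v_0$ is forced to be $z_2$, the head of $e$. (The unique length-$0$ element of $\mathcal{T}^e(D)$ is $(z_2)$, mapping to the empty sequence; the ``moreover'' clause concerns $\mathbf{w}$ with $\textup{cs}(\mathbf{w})\neq\emptyset$, and the length-$0$ trail cannot collide with any positive-length trail under $\textup{cs}$, so it may be set aside for injectivity.)

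First I would prove $e\in E(\beta_m)$. If the minimum repeated index $t(1)$ of $\mathbf{w}$ equals $d$, then $m=1$ and $\textup{cs}(\mathbf{w})$ is the single cycle $[\mathbf{w}]_\rho=\beta_m$, which visibly contains $e=e_d$; this is the base case. Otherwise $t(1)<d$, so the first simple closed subtrail $\mathbf{w}_1=v_{s(1)}\mathbf{w}v_{t(1)}$ consists only of the edges $e_{s(1)+1},\ldots,e_{t(1)}$ and in particular omits $e=e_d$. Hence $\widetilde{\mathbf{w}}_2=\mathbf{w}\setminus\mathbf{w}_1$ is again a closed trail ending at $e$, of length strictly less than $d$; by \Cref{rmk:can_not_reach_back} its cycle sequence is $(\beta_2,\ldots,\beta_m)$, and the induction hypothesis applied to $\widetilde{\mathbf{w}}_2$ gives $e\in E(\beta_m)$.

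For injectivity I would show that $\mathbf{w}$ is determined by the pair $(\textup{cs}(\mathbf{w}),e)$. Since $e\in E(\beta_m)$ and $z_2$ is the head of $e$, we have $z_2\in V(\beta_m)$, so $\beta_m(z_2)$ is defined; moreover the last piece of the decomposition, $\widetilde{\mathbf{w}}_m=\mathbf{w}_m$, is a simple closed trail, and because the ``$\setminus$'' operation never changes the initial vertex it is based at $v_0=z_2$ and represents $\beta_m$, hence equals $\beta_m(z_2)$. Now \Cref{rmk:can_not_reach_back} with $r=m$ gives $\mathbf{w}=\beta_1\cdots\beta_{m-1}\cdot\widetilde{\mathbf{w}}_m$, that is, $\mathbf{w}$ is the iterated insertion of the cycles $\beta_1,\ldots,\beta_{m-1}$ into $\beta_m(z_2)$; since each insertion of a cycle into a trail produces a uniquely determined trail whenever it is defined (\Cref{def:insertion_beta}), the outcome depends only on $(\beta_1,\ldots,\beta_m)$ and on $z_2$, and $z_2$ is recovered from $e$. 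Thus two trails of $\mathcal{T}^e(D)$ with equal cycle sequence coincide (the case $m=1$ being simply $\mathbf{w}=\beta_1(z_2)$).

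The step I expect to be the crux — really a matter of care rather than of a new idea — is this reconstruction, and specifically the need to recover the base vertex $v_0$: that is precisely why one restricts to trails ending at a fixed edge, since $\textup{cs}$ on all of $\mathcal{T}(D)$ is plainly not injective (a single cycle $\beta$ yields the cycle sequence $(\beta)$ from each of its $|V(\beta)|$ starting vertices). The surrounding bookkeeping — that $\widetilde{\mathbf{w}}_2$ retains $e$ as its last edge, that $\widetilde{\mathbf{w}}_m$ is based at $z_2$ and so equals $\beta_m(z_2)$, and that \Cref{rmk:can_not_reach_back} correctly relates the cycle sequence of a trail to that of its reduction — is routine once the definitions of \Cref{def:cycle_seq} are unwound.
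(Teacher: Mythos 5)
Your proof is correct, but it reaches injectivity by a different route than the paper. The paper's proof is an induction on $m$ that compares two trails $\mathbf{w}^1,\mathbf{w}^2$ with the same cycle sequence: it peels off the first simple closed subtrail of each, applies the inductive hypothesis to the remainders, and then shows $\alpha^1=\alpha^2$ by a minimality-of-first-repeated-vertex contradiction, finishing with the uniqueness of the trail on $\beta_1$ ending at a prescribed edge. You instead exhibit an explicit reconstruction of $\mathbf{w}$ from the pair $(\textup{cs}(\mathbf{w}),e)$: since $\widetilde{\mathbf{w}}_m$ is based at $v_0=z_2$ (the head of $e$, unchanged by the deletion operation) and represents $\beta_m$, it equals $\beta_m(z_2)$, and \Cref{rmk:can_not_reach_back} with $r=m$ gives $\mathbf{w}=\beta_1\cdots\beta_{m-1}\cdot\beta_m(z_2)$, a deterministic iterated insertion — so $\textup{cs}$ has a left inverse on $\mathcal{T}^{e}(D)$. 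This is essentially the ``shortcut algorithm'' and the left-inverse $h$ that the paper deploys only later (after the lemma and in \Cref{thm:pyramids_containing_edge}), pulled forward to prove the lemma itself; it is arguably cleaner, but it leans on \Cref{rmk:can_not_reach_back}, which the paper asserts from the definitions without proof, whereas the paper's argument for this lemma is self-contained apart from the uniqueness property recorded in \Cref{def:cycle_seq} and \Cref{rmk:can_not_reach_back}. A further point in your favor: you prove the ``moreover'' clause $e\in E(\beta_m)$ explicitly (by induction on length, noting the first simple closed subtrail omits $e_d$ whenever $t(1)<d$), while the paper's proof addresses only injectivity and leaves that clause to the definitions. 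Both treatments of the base case and of the length-$0$ trail agree.
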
 

\begin{proof}
We prove, by induction on $m$, that $\textup{cs}\ev{\mathbf{w}^1}=\textup{cs}\ev{\mathbf{w}^2}=\ev{\beta_1,\ldots,\beta_m}$ implies $\mathbf{w}^1=\mathbf{w}^2$, for any two closed trails $\mathbf{w}^1,\mathbf{w}^2$ ending at $e$. 

For the base case let $m=1$. If $\textup{cs}\ev{\mathbf{w}^1} = \beta = \textup{cs}\ev{\mathbf{w}^2}$, then $\mathbf{w}^1 = \mathbf{w}^2$, since there is a unique trail ending at $e$ with edge-set $E\ev{\beta}$. For the inductive hypothesis, assume that for each $m\geq 1$ and for all trails $\mathbf{w}^1,\mathbf{w}^2$ ending at $e$, if $\textup{cs}\ev{\mathbf{w}^1}=\textup{cs}\ev{\mathbf{w}^2} =\ev{ \beta_1,\ldots,\beta_{m-1}}$ then $\mathbf{w}^1=\mathbf{w}^2$. For the inductive step, let $m\geq 2$ and assume that $(\beta_1, \beta_2,\ldots, \beta_{m})$ is the cycle sequence of both $\mathbf{w}^1$ and $\mathbf{w}^2$. Then, by the definition of a cycle sequence, we have some closed subtrails $\mathbf{w}_1^{1}, \mathbf{w}_1^{2}$ such that $\beta_1 = [\mathbf{w}_1^{1}]_{\rho} = [\mathbf{w}_1^{2}]_{\rho} $ and 
$$\mathbf{w}^1=\alpha^{1} \mathbf{w}_1^{1} \gamma^{1}  \text{ and } \mathbf{w}^2=\alpha^{2} \mathbf{w}_1^{2} \gamma^{2}$$
where $\alpha^{i}$, for $i=1,2$, is a path (possibly of length zero), and $\widetilde{\mathbf{w}}_1^{i} = \alpha^{i} \gamma^{i}$ is a closed subtrail ending at $e$, with $\textup{cs}\ev{\widetilde{\mathbf{w}}_1^{i}} = (\beta_2, \beta_3,\ldots, \beta_{m})$. By the inductive hypothesis, $\widetilde{\mathbf{w}}_1^{1} = \widetilde{\mathbf{w}}_1^{2}$. For each $i=1,2$, let $v_i$ be the first vertex that $\mathbf{w}^i$ repeats, which is also the terminal vertex of $\alpha^{i}$. Suppose, for a contradiction, that $\alpha^1 \neq \alpha^2$. Assume, without loss of generality, $\alpha^1\subsetneq \alpha^2$. In particular, $v_1 \in V\ev{\alpha^2} \setminus V\ev{\alpha^1}$ and $v_1 \neq v_2$. But then, in the subtrail $\alpha^2 \mathbf{w}_1^{2}$ of $\mathbf{w}^2$, the second occurrence of $v_1$ in $\alpha^2 \mathbf{w}_1^{2} $ is before the second occurrence of $v_2$, which contradicts the definition of $v_2$. 

Hence, $\alpha^1=\alpha^2$, which also implies $\gamma^1=\gamma^2$. In particular, the terminal vertex of $\alpha^1 = \alpha^2$ is $v_1 = v_2$. On the cycle $\beta_1$, there is a unique edge $\widetilde{e}$ with terminal vertex $v_1$. Since $\mathbf{w}_1^{1},\mathbf{w}_1^{2}$ are both trails with cycle sequence $\beta_1$, ending at $\widetilde{e}$, the basis step implies $\mathbf{w}_1^{1} = \mathbf{w}_1^{2}$. Therefore, $\mathbf{w}_1=\alpha^1 \mathbf{w}_1^{1} \gamma^1 = \alpha^2 \mathbf{w}_1^{2} \gamma^2 =\mathbf{w}_2$. 
\end{proof}

Given a sequence $\vec{b} = \ev{ \beta_1,\ldots, \beta_m} \in \mathcal{B}\ev{D}^{*}$ and an edge $e\in E\ev{\beta_m}$, it is not necessarily the case that the sequence in question is the cycle sequence of a closed trail of $D$, ending at $e$. To determine if $\vec{b}$ is in the image $\mathrm{im}\ew{\textup{cs}\big\vert_{\mathcal{T}^{e}\ev{D}}}$, we propose a shortcut algorithm: Let $v$ be the terminal vertex of $e$ and let $ \widetilde{\mathbf{w}}_m :=  \beta_m\ev{ v }$. For each $i=m-1,\ldots,1$, construct $\widetilde{\mathbf{w}}_i = \beta_i \cdots \beta_{m-1} \cdot \beta_m\ev{ v }$ by the insertion operation. Then, test if $\textup{cs}\ev{  \widetilde{\mathbf{w}}_i   } = \ev{ \beta_i,\ldots,\beta_m}$. If not, stop and conclude that $\vec{b}$ does not correspond to a closed walk ending at $e$. Otherwise, the algorithm stops at $i=1$ and yields the unique closed trail $\mathbf{w}_1$ with the given cycle sequence.

\begin{remark}
\label{rmk:to_be_lemma}
Let $D$ be a digraph and let $\mathbf{w}_1$ and $\mathbf{w}_2 = \ev{v_0,e_1,v_1,\ldots,v_{d-1},e_d,v_d}$ be closed trails such that $V\ev{ \mathbf{w}_1 } \cap V\ev{ \mathbf{w}_2 } \neq \emptyset$, $E\ev{ \mathbf{w}_1 } \cap E\ev{ \mathbf{w}_2 } = \emptyset$ and 
\begin{align*}
& \textup{cs}\ev{ \mathbf{w}_1 } = (\beta_1, \beta_2,\ldots, \beta_{r})\\
& \textup{cs}\ev{ \mathbf{w}_2 } = (\widetilde{\beta}_1, \widetilde{\beta}_2,\ldots, \widetilde{\beta}_k) 
\end{align*}
Let $j$ be the minimal index such that $v_j\in V\ev{ \mathbf{w}_1 } \cap V\ev{ \mathbf{w}_2 }$. If $v_0 \mathbf{w}_2 v_j $ is a path, then the insertion $\mathbf{w}_1 \cdot \mathbf{w}_2$ of $\mathbf{w}_1$ into $\mathbf{w}_2$ satisfies:
$$ \textup{cs}\ev{ \mathbf{w}_1 \cdot \mathbf{w}_2 } = (\beta_1, \beta_2,\ldots, \beta_{r} , \widetilde{\beta}_1, \widetilde{\beta}_2,\ldots, \widetilde{\beta}_k) $$
\end{remark}

In the following, we refer to the elements of the pyramid-posets via their labels, which is unambiguous, since the labeling function $\ell$ in each pyramid is injective. 

\begin{theorem}
\label{thm:pyramids_containing_edge}
Let $D$ be an Eulerian (connected) multi-digraph. Define the mappings
\begin{align*}
\mathcal{T}\ev{D}	\xrightarrow{\textup{cs}}	\mathcal{B}\ev{D}^*  \xrightarrow{f}	\mathbbm{p}\ev{ D }
\end{align*}
where $\textup{cs}\ev{\mathbf{w}} = \ev{\beta_1,\ldots,\beta_m}$ is the cycle sequence of a closed trail $\mathbf{w}$, $\circ$ denotes composition of heaps, $f\ev{ \beta_1,\ldots,\beta_m } = \beta_1 \circ \ldots \circ \beta_m$ and $g = f\circ \textup{cs}$. Then, the following restrictions are bijections:
\begin{enumerate}
\item $g \big\vert_{\mathcal{W}^{e}\ev{D}} : \mathcal{W}^{e}\ev{D} \rightarrow \mymathbb{dp}^{e}\ev{ D }$ is a bijection between Eulerian trails of $D$ ending at $e$ and decomposition pyramids of $D$, with maximal piece containing $e$. (recall Definition \ref{def:heaps_of_a_digraph}.)
\item $g \big\vert_{\mathcal{W}^{u}\ev{D}} : \mathcal{W}^{u}\ev{D} \rightarrow \mymathbb{dp}^{u}\ev{ D }$ is a bijection between Eulerian trails of $D$ at $u$ and decomposition pyramids of $D$, with maximal piece containing $u$.
\end{enumerate}
\end{theorem}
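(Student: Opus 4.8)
The plan is to prove part (1) and then deduce part (2) from it. Throughout, $v$ denotes the terminal vertex of $e$.

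\emph{Well-definedness of $g$.} For an arbitrary closed trail $\mathbf w$ with $\textup{cs}(\mathbf w)=(\beta_1,\dots,\beta_m)$, the heap $g(\mathbf w)=\beta_1\circ\dots\circ\beta_m$ is a pyramid: by construction of the cycle sequence (\Cref{def:cycle_seq}) and \Cref{rmk:can_not_reach_back}, for each $i<m$ we have $\textup{cs}(\widetilde{\mathbf w}_{i+1})=(\beta_{i+1},\dots,\beta_m)$ together with $\beta_i\cdot\widetilde{\mathbf w}_{i+1}=\widetilde{\mathbf w}_i$, so $\beta_i$ shares a vertex with $\widetilde{\mathbf w}_{i+1}$, whose vertex set lies in $V(\beta_{i+1})\cup\dots\cup V(\beta_m)$; hence $\beta_i$ is concurrent with some later $\beta_j$, so $\beta_i<\beta_j$ in the composition, and a downward induction on $i$ gives $\beta_i\le\beta_m$ for every $i$, so that $\beta_m$ is the unique maximal piece. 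If in addition $\mathbf w\in\mathcal W^{e}(D)$, then the subtrails $\mathbf w_1,\dots,\mathbf w_m$ are edge-disjoint and their edges exhaust $E(D)$, so the pieces of $g(\mathbf w)$ partition $E(D)$ into cycles, and $e\in E(\beta_m)$ by \Cref{lem:cycle_seq}. Thus $g$ restricts to a map $\mathcal W^{e}(D)\to\mymathbb{dp}^{e}\ev{D}$.

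\emph{The bijection.} Because $\mathcal W^{e}(D)\subseteq\mathcal T^{e}(D)$ and $\textup{cs}$ is injective on $\mathcal T^{e}(D)$ (\Cref{lem:cycle_seq}), it suffices to prove that each $P\in\mymathbb{dp}^{e}\ev{D}$ has exactly one preimage under $g\vert_{\mathcal W^{e}(D)}$; I would argue by strong induction on the number $m$ of pieces of $P$. For $m=1$, $P=\{\beta\}$ with $E(\beta)=E(D)$ and $e\in E(\beta)$, the unique Eulerian trail of $D$ ending at $e$ is $\beta(v)$, and $\textup{cs}(\beta(v))=(\beta)$, so $\beta(v)$ is the unique preimage. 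For $m\ge2$, let $\beta_{\max}$ be the unique maximal piece of $P$. Every preimage $\mathbf w$ satisfies $\textup{cs}(\mathbf w)=(\beta_1,\dots,\beta_m)$ with $\beta_m=\beta_{\max}$ and $\mathbf w=\beta_1\cdot\widetilde{\mathbf w}_2$, where $\beta_1$ is a minimal piece of $P$, $\widetilde{\mathbf w}_2\in\mathcal W^{e}(D-E(\beta_1))$ and $g(\widetilde{\mathbf w}_2)=P\setminus\{\beta_1\}$ — this last identity follows from \Cref{lem:push_down}, which writes $P=\beta_1\circ(P\setminus\{\beta_1\})$, together with the left-cancellativity of heap composition by a single minimal piece. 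Conversely, deleting any minimal piece $\beta\neq\beta_{\max}$ from $P$ gives $P\setminus\{\beta\}\in\mymathbb{dp}^{e}\ev{D-E(\beta)}$ with $D-E(\beta)$ still connected — two pieces in different components of $D-E(\beta)$ share no vertex, so none of them can be joined to $\beta_{\max}$ by a chain of concurrences — so by the induction hypothesis there is a unique $\mathbf w_\beta\in\mathcal W^{e}(D-E(\beta))$ with $g(\mathbf w_\beta)=P\setminus\{\beta\}$, and by \Cref{rmk:to_be_lemma} the insertion $\beta\cdot\mathbf w_\beta$ is a preimage of $P$ exactly when the initial segment of $\mathbf w_\beta$ from $v$ to its first visit of $V(\beta)$ is a path.

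\emph{The main obstacle.} It remains — and this is the crux — to show that exactly one minimal piece $\beta$ of $P$ has this path property; equivalently, that among the linear extensions of the poset of $P$ ending with $\beta_{\max}$, exactly one survives every test of the shortcut (reconstruction) algorithm described just before the theorem. Uniqueness comes from the uniqueness clause of property \eqref{eqn:uniqueness_property} (cf. \Cref{rmk:can_not_reach_back}): the first simple closed subtrail of a closed trail, hence the first cycle of its cycle sequence, is forced, so two distinct surviving extensions would produce two trails in $\mathcal W^{e}(D)$ with equal cycle sequences, contradicting \Cref{lem:cycle_seq}. Existence — exhibiting, for the given $P$, a minimal piece with the path property — is the step I expect to be the most delicate, and it can be obtained by an inductive analysis of how the first simple closed subtrail and the cycle sequence interact with the insertion operation (\Cref{rmk:to_be_lemma}). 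Granting this, $g\vert_{\mathcal W^{e}(D)}\colon\mathcal W^{e}(D)\to\mymathbb{dp}^{e}\ev{D}$ is a bijection, which is part (1).

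\emph{From (1) to (2).} A cycle through a vertex $u$ contains exactly one edge with terminal vertex $u$. Hence $\mymathbb{dp}^{u}\ev{D}=\bigsqcup_{e\,:\,\mathrm{term}(e)=u}\mymathbb{dp}^{e}\ev{D}$, the index $e$ being recovered as the unique in-edge at $u$ of the maximal piece, and similarly an Eulerian trail at $u$ is split according to its last edge, giving $\mathcal W^{u}(D)=\bigsqcup_{e\,:\,\mathrm{term}(e)=u}\mathcal W^{e}(D)$. Since $g$ carries each $\mathcal W^{e}(D)$ bijectively onto $\mymathbb{dp}^{e}\ev{D}$ by part (1), it carries $\mathcal W^{u}(D)$ bijectively onto $\mymathbb{dp}^{u}\ev{D}$, which is part (2).
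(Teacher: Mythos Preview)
Your well-definedness argument and the reduction of (2) to (1) are fine and close to the paper's treatment. The gap is in your inductive step for the bijection.

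Your uniqueness argument does not work as stated. If two distinct minimal pieces $\beta,\beta'$ of $P$ both satisfied the path property, the trails $\beta\cdot\mathbf w_\beta$ and $\beta'\cdot\mathbf w_{\beta'}$ would have \emph{different} cycle sequences (starting with $\beta$ and $\beta'$ respectively), not equal ones, so \Cref{lem:cycle_seq} gives no contradiction. What you would actually need is that two linear extensions of $P$ which both arise as cycle sequences of trails in $\mathcal W^e(D)$ coincide --- but that is exactly injectivity of $g\vert_{\mathcal W^e(D)}$, which is the statement under proof, so the argument is circular. As for existence, you explicitly flag it as the ``main obstacle'' and only gesture at \Cref{rmk:to_be_lemma}; so the crux of the theorem is left open. (There is also a smaller issue: your ``exactly when'' after \Cref{rmk:to_be_lemma} asserts an ``only if'' direction that the remark does not supply.)

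The paper circumvents both issues by determining the piece to peel off from the top of $P$ rather than by testing minimal pieces. For surjectivity, starting from the maximal piece $\delta$, it finds the first vertex that the trail $\delta(u)$ visits among the pieces covered by $\delta$, takes the covered piece $\tilde\delta$ containing that vertex, splits $P$ as $\downarrow\!\{\tilde\delta\}\circ(P\setminus\downarrow\!\{\tilde\delta\})$ via \Cref{lem:push_down}, and recurses on each factor. For injectivity, it builds an explicit left inverse $h$: from $(\delta,u)$ it repeatedly descends via the rule ``first vertex visited among the pieces strictly below'' to reach a canonical minimal piece $a_k$, and sets $h(P)=a_k\cdot h(P\setminus a_k)$. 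The verification $h\circ g=\mathrm{id}$ rests on \Cref{lem:strict_down_set}, which pins down, for a given trail, exactly which pieces lie strictly below each $\beta_{r(j)}$ in $g(\mathbf w)$ and which vertex is first visited there. This lemma supplies precisely the structural control your bottom-up sketch is missing; if you want to salvage your approach, the analogue you would need is a direct proof that the ``first visited below'' walk singles out the unique minimal piece with your path property.
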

Before the \hyperlink{proof:walkspyramidsfinal}{proof} of \Cref{thm:pyramids_containing_edge} (on p.~\pageref*{proof:walks_pyramids_final} below), we have some technical lemmas. 

\begin{lemma}
\label{lem:strict_down_set}
Consider the indices $ \mathcal{R}\ev{1} := \{ i \in [1,m]: s\ev{ i }\leq s\ev{ 1 } \}$ in \Cref{def:cycle_seq}. We can find a strictly increasing sequence $1 = r\ev{1} < \ldots < r\ev{ k } = m $ such that $ \mathcal{R}\ev{1} = \{ r\ev{ i } : 1\leq i \leq k \}$, and in particular, $ 0 = s\ev{ m } = s\ev{ r\ev{ k } } \leq \ldots \leq s\ev{ r\ev{1} } $. Let $y_j:= v_{ s\ev{r\ev{ j }} } $, for each $j=1,\ldots,k$. Then, for each $j=2,\ldots,k$:
\begin{enumerate}
\item If $a<r\ev{j}$, then $sr\ev{j-1} \leq s\ev{a}$.
\item $\alpha_{r\ev{j}} = v_0 \widetilde{\mathbf{w}}_{r\ev{j}} v_{sr\ev{j}} = v_0 \mathbf{w} v_{sr\ev{j}} $, and in particular, $\alpha_{r\ev{k}} \subseteq \ldots \subseteq \alpha_{r\ev{1}}$.
\item $y_{j-1} \in V\ev{ \mathbf{w}_{ r\ev{ j }} }$.
\item If $ i \leq r\ev{j}-1$ and $y_{j-1} \in V\ev{ \beta_i }$, then $i \leq r\ev{ j-1 }$.
\item The first vertex that the simple closed trail $\mathbf{w}_{r\ev{j}} = \beta_{r\ev{j}}\ev{ y_{j} }$ visits in $V\ev{ \mathbf{w}_{r\ev{j}} } \cap \bigcup_{1\leq i \leq r\ev{j}-1 } V\ev{ \mathbf{w}_i}$ is $y_{j-1}$. Furthermore, we have $\langle \beta_{r\ev{j}} \rangle_{P} = \{ \beta_i: 1\leq i \leq r\ev{j}-1 \}$.
\end{enumerate}
\end{lemma}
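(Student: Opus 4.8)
The plan is to prove all five parts by unwinding the definitions in \Cref{def:cycle_seq}, arguing by induction on $j$, since each assertion about $r(j)$ only refers to data constructed at earlier stages $r(1),\ldots,r(j-1)$ and to the cycle sequence of the single trail $\mathbf{w}$.

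First I would fix notation: recall that $\widetilde{\mathbf{w}}_i$ is the residual trail after removing $\mathbf{w}_1,\ldots,\mathbf{w}_{i-1}$, that $t(i)$ is its minimal repeated index, $s(i)$ is the earlier occurrence of that vertex, $\mathbf{w}_i = v_{s(i)}\widetilde{\mathbf{w}}_i v_{t(i)}$ is the $i$-th simple closed subtrail, and $\alpha_i = v_0 \widetilde{\mathbf{w}}_i v_{s(i)}$ is a path. The crucial observation, which I would establish first as a preliminary, is that the indices in $\mathcal{R}(1)$ are exactly those stages $i$ at which the extracted subtrail $\mathbf{w}_i$ ``touches'' the initial path $\alpha_1 = v_0 \mathbf{w} v_{s(1)}$, equivalently $s(i) \le s(1)$; at all other stages the subtrail lies strictly inside the residual cycle-part past $v_{s(1)}$. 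Ordering $\mathcal{R}(1) = \{r(1) < \cdots < r(k)\}$ with $r(1) = 1$ and $r(k) = m$ is then immediate, and the monotonicity $0 = s(r(k)) \le \cdots \le s(r(1))$ follows because once we reach a stage touching position $s(r(j))$, every subsequent touch must occur at or before that position (the trail is being eaten from $v_{s(1)}$ backward toward $v_0$). For part (1), if $a < r(j)$ then either $a \notin \mathcal{R}(1)$, in which case $s(a) > s(1) \ge sr(j-1)$, or $a \in \mathcal{R}(1)$, in which case $a \le r(j-1)$ so $s(a) \ge sr(j-1)$ by the monotonicity just established. Part (2) is then a bookkeeping consequence: since no subtrail removed before stage $r(j)$ touches the segment of $\mathbf{w}$ up to position $sr(j)$, that initial segment of $\widetilde{\mathbf{w}}_{r(j)}$ coincides with the corresponding initial segment of $\mathbf{w}$, giving $\alpha_{r(j)} = v_0 \widetilde{\mathbf{w}}_{r(j)} v_{sr(j)} = v_0 \mathbf{w} v_{sr(j)}$, and the nesting $\alpha_{r(k)} \subseteq \cdots \subseteq \alpha_{r(1)}$ follows from $sr(k) \le \cdots \le sr(1)$.

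For part (3), $y_{j-1} = v_{sr(j-1)}$ is the vertex at which $\mathbf{w}_{r(j-1)}$ was cut from the residual trail; I would argue that after removing $\mathbf{w}_{r(j-1)}$, the vertex $v_{sr(j-1)}$ becomes (or remains) a branch point, and that the very next stage at which the minimal repeated index again touches position $\le s(1)$ is $r(j)$, whose subtrail $\mathbf{w}_{r(j)} = v_{sr(j)}\widetilde{\mathbf{w}}_{r(j)}v_{tr(j)}$ must then pass through $v_{sr(j-1)} = y_{j-1}$ — here one uses that $\mathbf{w}_{r(j)}$ is a \emph{maximal} simple closed subtrail hanging off the path $v_0 \mathbf{w} v_{sr(j)}$ (Remark~\ref{rmk:can_not_reach_back}), together with $sr(j) \le sr(j-1)$. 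Part (4) is the converse-style statement: if $i \le r(j)-1$ and $y_{j-1} \in V(\beta_i)$, I would show that $\mathbf{w}_i$ cannot be one of the ``interior'' subtrails past $v_{s(1)}$ but also cannot lie strictly between stages $r(j-1)$ and $r(j)$, because any subtrail extracted in that range is confined (by the minimality of $t(\cdot)$ and part (1)) to the portion of the trail strictly after position $sr(j-1)$, hence cannot contain $y_{j-1} = v_{sr(j-1)}$; therefore $i \le r(j-1)$. Finally part (5) combines the above: the first vertex $\mathbf{w}_{r(j)}$ visits among the previously-used vertices is $y_{j-1}$ by (3) and (4) (it lies on $\mathbf{w}_{r(j-1)}$, and nothing earlier-in-$\mathbf{w}_{r(j)}$ can have been used before), and the down-set identity $\langle \beta_{r(j)}\rangle_P = \{\beta_i : 1 \le i \le r(j)-1\}$ follows from how the composition-of-heaps order on $P = \beta_1 \circ \cdots \circ \beta_m$ is generated (Definition of $\circ$): $\beta_{r(j)}$ sits above exactly those earlier pieces to which it is connected through a chain, and the cycle-sequence construction guarantees this chain reaches all of $\beta_1,\ldots,\beta_{r(j)-1}$ precisely because each $\mathbf{w}_i$ with $i < r(j)$ shares a vertex (transitively) with $\mathbf{w}_{r(j)}$.

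The main obstacle I anticipate is part (3) — showing the shared vertex is exactly $y_{j-1}$ rather than some other common vertex of $\mathbf{w}_{r(j)}$ with the earlier subtrails. This requires carefully tracking how the ``minimal repeated index'' migrates as subtrails are deleted, and in particular arguing that no stage strictly between $r(j-1)$ and $r(j)$ can create a repeated vertex at a position $\le sr(j-1)$; I would handle this by a careful induction using part (1) as the inductive engine, since part (1) precisely controls where the cut points $s(a)$ can lie for intermediate stages $a$. Parts (4) and (5) then reduce largely to bookkeeping once (1)–(3) are in place, though the down-set claim in (5) also needs the observation that consecutive subtrails in the cycle sequence are linked into a single connected ``spine'' via the insertion operation, which is implicit in Remark~\ref{rmk:can_not_reach_back}.
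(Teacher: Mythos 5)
Your plan for parts (1) and (2) coincides with the paper's argument (the same two-case analysis for (1), and the positional bookkeeping for (2)), and the monotonicity preliminary is fine. The gap begins at part (4), which you treat as routine: the inference that a subtrail $\mathbf{w}_a$ extracted at a stage $a$ strictly between $r(j-1)$ and $r(j)$ is ``confined to positions after $s(r(j-1))$, hence cannot contain $y_{j-1}$'' is not valid as stated, because a closed trail may visit the same vertex at many positions, so the vertex $y_{j-1}=v_{s(r(j-1))}$ could in principle reappear at some position $b$ inside $[s(a),t(a)]$. Excluding this is about vertex coincidences, not cut-point locations: the paper shows that such an occurrence would make $v_{s(r(j-1))}\,\widetilde{\mathbf{w}}_i\,v_b$ a simple closed subtrail sitting at the end of a path-prefix, so by the uniqueness property of \Cref{rmk:can_not_reach_back} it must equal $\mathbf{w}_i$, forcing $s(i)=s(r(j-1))$, hence $i\in\mathcal{R}(1)$ and $i\le r(j-1)$. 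Your parenthetical ``by the minimality of $t(\cdot)$'' gestures at a repairable version of this, but the step is exactly where the work lies and it is not carried out.

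The more serious gap is part (5). You assert the first-vertex statement ``by (3) and (4)'', but those parts only concern the single vertex $y_{j-1}$; they say nothing about whether a vertex of $\mathbf{w}_{r(j)}$ occurring \emph{before} $y_{j-1}$ --- that is, a vertex of the path $v_0\,\mathbf{w}\,v_{s(1)}$ --- coincides with a vertex of some earlier piece $\beta_i$, $i<r(j)$, appearing at a later position of $\mathbf{w}$. This is the heart of the lemma, and the engine you propose (your part (1), which only controls where the cut points $s(a)$ lie) cannot supply it, since the difficulty is one vertex occupying two different positions, which positional bookkeeping does not detect. The paper closes this by first proving the auxiliary property that no vertex at a position $a<s(1)$ can lie in $V(\beta_1)$ (again via the uniqueness property of \Cref{rmk:can_not_reach_back}: otherwise $v_a\,\mathbf{w}\,v_b$ would be a simple closed subtrail preceding $\mathbf{w}_1$), and then propagating it by an induction on the length $m$ of the cycle sequence, peeling off $\beta_1$ and applying the claim to $\widetilde{\mathbf{w}}_1$ with cycle sequence $(\beta_2,\dots,\beta_m)$; the same induction, together with the chain $\beta_1\le_P\beta_{r(2)}\le_P\cdots\le_P\beta_{r(j)}$ through the shared vertices $y_{z-1}$, yields the down-set identity. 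Your proposal contains neither this auxiliary property nor the induction on $m$, so as written it does not close parts (4) and (5).
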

\begin{proof}
\phantom{a}

\begin{enumerate}
\item We need to consider two cases: If $a\in \mathcal{R}\ev{1}$, then $a=r\ev{j'}$, for some $j'$. We get $a = r\ev{j'} < r\ev{ j }$, which implies $j'\leq j-1$, $r\ev{j'} \leq r\ev{ j-1 }$ and $sr\ev{j-1} \leq sr\ev{j'} = s\ev{ a }$. If, on 
the other hand, $a\not\in  \mathcal{R}\ev{1}$, then $sr\ev{j-1} \leq s\ev{1} < s\ev{a}$. 
\item By Part 1, for any $a\leq r\ev{j}-1$, we have $sr\ev{j} \leq sr\ev{j-1} \leq s\ev{a}$, and so:
$$
\alpha_{r\ev{j}} = v_0 \widetilde{\mathbf{w}}_{r\ev{j}} v_{sr\ev{j}} = v_0 \mathbf{w}\setminus\ev{ \mathbf{w}_1 \cup \ldots \cup \mathbf{w}_{r\ev{j}-1} }  v_{sr\ev{j}} = v_0 \mathbf{w} v_{sr\ev{j}} 
$$
\item By Part 1, for any $a\leq r\ev{j}-1$, we have $sr\ev{j} \leq s\ev{a}$, which implies, 
$$
v_0 \widetilde{\mathbf{w}}_{r\ev{j}} v_{sr\ev{ j-1 }}  = v_0 \mathbf{w} \setminus \ev{ \mathbf{w}_1 \cup \ldots \cup \mathbf{w}_{r\ev{j}-1} } v_{sr\ev{ j-1 }} = v_0 \mathbf{w} v_{sr\ev{ j-1 }} =  \alpha_{r\ev{ j-1 }} 
$$
where the last step follows from Part 2. Since $v_{tr\ev{j}}$ is the first repeated vertex of $\widetilde{\mathbf{w}}_{r\ev{j}}$, it follows that $sr\ev{j-1} \leq tr\ev{j}$. Therefore, \\
$v_{sr\ev{j}} \widetilde{\mathbf{w}}_{r\ev{j}} v_{sr\ev{j-1}} $ is a subpath of the simple closed trail $\mathbf{w}_{r\ev{j}} = v_{sr\ev{j}} \widetilde{\mathbf{w}}_{r\ev{j}} v_{tr\ev{j}}$, and the claim follows. 

\item Let $i\in [1,r\ev{j}-1]$ be chosen, such that $y_{j-1}\in V\ev{ \mathbf{w}_i }$, where $\mathbf{w}_i =  v_{s\ev{i}} \widetilde{\mathbf{w}}_{i} v_{t\ev{i}} $ from Definition \ref{def:cycle_seq}. In particular, $ y_{j-1} = v_{sr\ev{ j-1 }} = v_b$ for some $s\ev{i} \leq b \leq t\ev{i}$. 

Then, consider the path $\alpha_i = v_0 \widetilde{\mathbf{w}}_{i} v_{ s\ev{ i } }$. 

\begin{enumerate}
\item The path $ v_0 \widetilde{\mathbf{w}}_{i} v_{ sr\ev{ j-1 } }$ is a subpath of $\alpha_i$. 
\item The paths $ v_{ sr\ev{ j-1 } } \widetilde{\mathbf{w}}_{i} v_{s\ev{ i } } $ and $ v_{s\ev{ i } } \widetilde{\mathbf{w}}_{i} v_b $ are subpaths of $\alpha_i$ and $\mathbf{w}_i$, respectively, which means that $v_{sr\ev{ j-1 }} \widetilde{\mathbf{w}}_{i} v_b$ is a simple closed trail. 
\end{enumerate}
Therefore, by the uniqueness property of $\mathbf{w}_{i}$ noted in \Cref{rmk:can_not_reach_back}, we have $v_{sr\ev{ j-1 }} \widetilde{\mathbf{w}}_{i} v_b = \mathbf{w}_{ i } = v_{s\ev{ i }} \widetilde{\mathbf{w}}_{i} v_{t\ev{i}} $ and $s\ev{ i } = sr\ev{ j-1 }$. Then, $s\ev{ i } \leq s\ev{ 1 }$ and $i \in \mathcal{R}\ev{1}$. Now, suppose, for a contradiction, that $r\ev{j-1} < i $. Then, $r\ev{j-1} < i < r\ev{j}$ implies $i \notin \mathcal{R}\ev{1}$, which is a contradiction. Hence, we obtain that $i\leq r\ev{ j-1 }$ and the claim is established. 

\item Assume $m\geq 2$. Define $\mathcal{R}\ev{2} = \{ i \in [1,m]: s\ev{ i }\leq s\ev{ 2 } \}$. We have two cases to consider. If $s\ev{2} < s\ev{1}$, then we have $r\ev{2} = 2$ and $\mathcal{R}\ev{2} = \mathcal{R}\ev{1}\setminus \{ 1 \}$. If, on the other hand, $s\ev{1} \leq s\ev{2}$, then $\mathcal{R}\ev{1} \subseteq \mathcal{R}\ev{2}$. In both cases, we can find a strictly increasing sequence $ 2= \tilde{r}\ev{ a } < \ldots <  \tilde{r}\ev{ 2 } < \ldots < \tilde{r}\ev{ k } = m $ such that $\mathcal{R}\ev{2} = \{ \tilde{r}\ev{ i } \}_{i=a}^{k} $, where $ \Z \ni a\leq 2$ and $\tilde{r}\ev{ j } = r\ev{j}$, for $j=2,\ldots,k$.

We first show that for each $i=2,\ldots, m$, 
\begin{equation}
v_a \in V\ev{ \beta_1 } \cap V\ev{ \beta_i } \text{ implies } s\ev{1} \leq a
\label{eqn:beta_intersecting}
\end{equation}

Let $i\in [2,m]$ be fixed. Suppose, for a contradiction, that there is some $a$ with $ a < s\ev{1}$ such that $ v_a \in V\ev{ \mathbf{w}_{i} } \cap V\ev{ \mathbf{w}_1 } $. Then, there is some index $b$ with $s\ev{1} \leq b \leq t\ev{1}$ such that $v_a = v_b$. Consider the path $\alpha_1 = v_0 \mathbf{w} v_{ s\ev{ 1 } }$. 
\begin{enumerate}
\item The path $ v_0 \mathbf{w} v_{ a }$ is a subpath of $\alpha_1$. 
\item The paths $ v_{ a } \mathbf{w} v_{s\ev{ 1 } } $ and $ v_{s\ev{ 1 } } \mathbf{w} v_b $ are subpaths of $\alpha_1$ and $\mathbf{w}_1$, respectively, which means that $v_{a} \mathbf{w} v_b$ is a simple closed trail. 
\end{enumerate}
Therefore, by the uniqueness property of $\mathbf{w}_{1}$ from \Cref{rmk:can_not_reach_back}, we have $v_{a} \mathbf{w} v_b = \mathbf{w}_{ 1 } = v_{s\ev{ 1 }} \mathbf{w} v_{t\ev{1}} $ and $a = s\ev{ 1 }$, which is a contradiction.

We apply induction on $m\geq 2$, to establish both claims. First, we show that a closed trail $\mathbf{w}$ with $\textup{cs}\ev{ \mathbf{w} } = \ev{ \beta_1,\ldots,\beta_m } $ satisfies the first claim. For the base case, assume $m=2$. Then, $y_1 = v_{s\ev{1}} \in V\ev{ \mathbf{w}_2 }$, by Part 3 above. The claim follows from property (\ref{eqn:beta_intersecting}). For the inductive step, assume $m\geq 3$. By the inductive hypothesis applied to $\widetilde{\mathbf{w}}_1 $, we know that the first vertex $\mathbf{w}_{ r\ev{ j }}$ visits in $V\ev{ \mathbf{w}_{r\ev{j}} } \cap \bigcup_{2\leq i \leq r\ev{j}-1 } V\ev{ \mathbf{w}_i}$ is $y_{j-1} = v_{sr\ev{j-1}}$. If $V\ev{\beta_1} \cap V\ev{ \mathbf{w}_{r\ev{j}} } = \emptyset$, then the claim follows immediately. Otherwise, let $v_a\in V\ev{\beta_1} \cap V\ev{ \mathbf{w}_{r\ev{j}} }$. By property (\ref{eqn:beta_intersecting}), we get $s\ev{ 1 }\leq a$. So, it follows that $sr\ev{ j-1 } = s\tilde{r} \ev{ j-1 } \leq \ldots \leq s\tilde{r} \ev{ 2 } = sr\ev{ 2 } \leq sr\ev{ 1 } =  s\ev{ 1 } \leq a $.

Finally, we apply induction on $m\geq 2$, to show that a closed trail $\mathbf{w}$ with $g\ev{ \mathbf{w} } = \beta_1 \circ \ldots \circ \beta_m = P$ has $\langle \beta_{r\ev{j}} \rangle_{P} = \{ \beta_i: 1\leq i \leq r\ev{j}-1 \}$. The base case of $m=2$ is clear. For the inductive step, let $m\geq 3$. Let $j\geq 2$ be fixed. We have $\mathbf{w} = \alpha_0 \mathbf{w}_{1} \gamma_0 $, where $R:= g\ev{ \widetilde{\mathbf{w}}_1 } = \beta_2 \circ \ldots \circ \beta_m $ and $\beta_1 \circ R = P$. First, note that for $z=2,\ldots,j$, the pieces $\beta_{r\ev{z-1}}$ and $\beta_{r\ev{z}}$ are concurrent, as they share the vertex $y_{z-1}$ and so, $ \beta_{r\ev{z-1}} \leq_{P}  \beta_{r\ev{z}}$. By transitivity, we have $\beta_1 \leq_{P}  \beta_{r\ev{j}}$. 
Therefore, 
\begin{align*}
& \langle \beta_{r\ev{j}} \rangle_{P} = \langle \beta_{\tilde{r}\ev{j}} \rangle_{R} \cup \{ \beta_1 \} \\ 
& = \{ \beta_i: 2\leq i \leq \tilde{r}\ev{j} - 1 \} \cup \{ \beta_1 \} & \text{ by the inductive hypothesis} \\ 
& = \{ \beta_i: 1\leq i \leq r\ev{j} - 1 \} 
\end{align*}
\end{enumerate}
\end{proof}

\begin{lemma}
\label{lem:terminal_subtrail}
For a multi-digraph $D$, let $\textup{cs}\ev{ \mathbf{w} } = \ev{\beta_1,\ldots,\beta_m}$ be the cycle sequence of a trail $\mathbf{w} \in \mathcal{T}\ev{D}$. Then, for each $i=1,\ldots, m-1$, the cycle $\beta_i$ is concurrent with at least one $\beta_j$ with $i < j \leq m$. In particular, $\beta_{m-1}$ and $\beta_m$ are concurrent. 
\end{lemma}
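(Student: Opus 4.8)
The plan is to produce, for each $i\in\{1,\ldots,m-1\}$, an explicit vertex that witnesses the concurrence of $\beta_i$ with some $\beta_j$ with $i<j\le m$. The candidate I would use is $y := v_{s\ev{i}}$, the common initial and terminal vertex of the simple closed subtrail $\mathbf{w}_i = v_{s\ev{i}}\widetilde{\mathbf{w}}_i v_{t\ev{i}}$ extracted at step $i$ of \Cref{def:cycle_seq}. Since $\beta_i = [\mathbf{w}_i]_\rho$, we have $V\ev{\beta_i} = V\ev{\mathbf{w}_i}\ni y$ automatically, so the whole statement reduces to showing $y\in V\ev{\mathbf{w}_j}$ for some $j$ with $i<j\le m$.

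For this, I would first recall from the definition of subtrail-deletion $\widetilde{\mathbf{w}}_i\setminus\mathbf{w}_i$ that $y = v_{s\ev{i}} = v_{t\ev{i}}$ remains a vertex of $\widetilde{\mathbf{w}}_{i+1} := \widetilde{\mathbf{w}}_i\setminus\mathbf{w}_i$ — the deletion removes the edges strictly between the two occurrences of $y$ in $\widetilde{\mathbf{w}}_i$ and splices the trail back together at $y$. Iterating the construction gives $E\ev{\widetilde{\mathbf{w}}_{i+1}} = \bigsqcup_{j=i+1}^m E\ev{\mathbf{w}_j}$, a nonempty set, since $i+1\le m$ and each $\mathbf{w}_j$, being a cycle, has at least two edges. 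Every vertex appearing in a closed trail of positive length is incident with some edge of that trail; applied to $\widetilde{\mathbf{w}}_{i+1}$, this shows $y$ is incident with an edge $e\in E\ev{\widetilde{\mathbf{w}}_{i+1}}$, and $e\in E\ev{\mathbf{w}_j}$ for a unique $j$ with $i<j\le m$. Hence $y\in V\ev{\mathbf{w}_j} = V\ev{\beta_j}$, so $\beta_i$ and $\beta_j$ share $y$ and are concurrent. Specializing to $i = m-1$ forces $j = m$, which is the final assertion.

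I do not expect a genuine obstacle: the argument is uniform in $D$ and uses no case analysis. The only points needing care are the index conventions of \Cref{def:cycle_seq} — namely that $\widetilde{\mathbf{w}}_i\setminus\mathbf{w}_i$ keeps $v_{s\ev{i}}$ as a vertex, and that $\widetilde{\mathbf{w}}_{i+1}$ is genuinely a closed trail of positive length for every $i<m$ (which holds because it still contains the cycle $\beta_{i+1}$).
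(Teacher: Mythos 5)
Your argument is correct, and it is a genuinely different (and lighter) route than the paper's. The paper proves the lemma by induction on $m$: the inductive hypothesis handles $i\geq 2$ via the remainder $\widetilde{\mathbf{w}}_2$, and the remaining case $i=1$ is settled by invoking \Cref{lem:strict_down_set}, whose Part 3 exhibits the specific vertex $y_1=v_{s\ev{1}}$ inside the specific later cycle $\beta_{r\ev{2}}$. You instead argue directly for every $i$: the base vertex $v_{s\ev{i}}$ of the extracted simple closed trail $\mathbf{w}_i$ survives the splicing $\widetilde{\mathbf{w}}_{i+1}=\widetilde{\mathbf{w}}_i\setminus\mathbf{w}_i$, the remainder is a closed trail of positive length (for $i<m$) whose edge set is $\bigsqcup_{j=i+1}^m E\ev{\mathbf{w}_j}$, and any vertex occurring in such a trail is incident with one of its edges, hence lies on some $\mathbf{w}_j$ with $j>i$. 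This is self-contained, avoids both the induction and the machinery of \Cref{lem:strict_down_set}, and proves exactly what the lemma asserts (existence of a concurrent later cycle). What the paper's heavier route buys is the extra precision — identifying \emph{which} later cycle ($\beta_{r\ev{2}}$, via the indices $r\ev{j}$ and vertices $y_j$) — which is not needed here but is used elsewhere in the proof of \Cref{thm:pyramids_containing_edge}; your witness only needs existence, which suffices. One trivial remark: you do not need each $\mathbf{w}_j$ to have at least two edges, only that $E\ev{\widetilde{\mathbf{w}}_{i+1}}\neq\emptyset$, which already follows from $i+1\leq m$.
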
	
\begin{proof}
Let $m\geq 1$. Given a trail $\mathbf{w}$ with cycle sequence $\textup{cs}\ev{\mathbf{w}}= \ev{ \beta_1,\ldots,\beta_m}$, then, by the definition of a cycle sequence, we have $\mathbf{w} = \mathbf{w}_1 \cdot \widetilde{\mathbf{w}}_2 = \beta_1 \cdot \widetilde{\mathbf{w}}_2$, where $\mathbf{w}_1$ is the unique closed subtrail with $[\mathbf{w}_1]_{\rho} = \beta_1$ and $\widetilde{\mathbf{w}}_2 $ is a closed subtrail with $\textup{cs} \ev{ \widetilde{\mathbf{w}}_2 } = \ev{\beta_2,\ldots, \beta_m} $.

We prove, by induction on $m$, that, for each $i=1,\ldots, m-1$, the cycle $\beta_i$ is concurrent with at least one cycle $\beta_j$ with $i < j \leq m$. In the base case $m=1$, the statement holds vacuously. For the inductive step, let $m\geq 2$. By the inductive hypothesis applied to $\widetilde{\mathbf{w}}_2$, we infer that for each $i=2,\ldots, m-1$, the cycle $\beta_i$ is concurrent with at least one cycle $\beta_j$ with $i < j \leq m$. Hence, it is enough to show the statement when $i=1$: By \Cref{lem:strict_down_set}, we have $y_1 \in V\ev{ \beta_1 } \cap V\ev{ \beta_{r\ev{2} } }$, and so, $\beta_1$ is concurrent with at least one $\beta_j$ with $1 < j \leq m$, and the inductive step is complete. 
\end{proof}

\hypertarget{proof:walkspyramidsfinal}{\phantom{a}}
\begin{proof}[Proof of \Cref{thm:pyramids_containing_edge}:]
\label{proof:walks_pyramids_final}

We only show that $g \big\vert_{\mathcal{W}^{e}\ev{D}}$ is bijective. The second part is proven in a similar way.

Let $u$ be the terminal vertex of $e$. Let $\mathbf{w} \in \mathcal{T}^{e}\ev{D}$ be a trail, ending at $e$. In particular, $\mathbf{w}$ is a trail at the vertex $u$, i.e., $\mathbf{w}\in \mathcal{T}^{u}\ev{D}$. Let $\ev{ \beta_1, \beta_2,\ldots, \beta_{m} }$ be the cycle sequence of $\mathbf{w}$. By \Cref{lem:cycle_seq}, we have $e\in E\ev{ \beta_m }$, the last cycle in the cycle sequence of $\mathbf{w}$. 

i) First, we show that $g\big\vert_{\mathcal{W}^{e}\ev{D}} \ev{\mathbf{w}} = \beta_1 \circ  \beta_2 \circ \ldots \circ \beta_{m} $ is a pyramid $P$ with maximal piece $\beta_m$ (which contains $e$). The base case, $m=1$ is clear. For the inductive step, let $m\geq 2$. By \Cref{rmk:can_not_reach_back}, the subtrail $\widetilde{\mathbf{w}}_{2}$ ending at $e$ has cycle sequence $(\beta_{2}, \ldots, \beta_m)$. By the inductive hypothesis, $g\ev{ \widetilde{\mathbf{w}}_{2} } = \beta_2 \circ \ldots \circ \beta_m $ is a pyramid with maximal piece $\beta_m \in \mathcal{B}^e$. By \Cref{lem:terminal_subtrail}, $\beta_1$ is concurrent with $\beta_j$, for some $2\leq j\leq m$. By the definition of composition of heaps, we have $\beta_1 \leq \beta_j$. Since $\beta_m $ is the maximal piece of the heap $\beta_2 \circ \ldots \circ \beta_m$, we also have $\beta_j \leq \beta_m$. Therefore, by transitivity, $\beta_1 \leq \beta_m$.

ii) $g\big\vert_{\mathcal{W}^{e}\ev{D}}$ is a surjection: 

By induction on the number of pieces, we prove the claim: For all pyramids $P$ with $m \geq 1$ pieces, where maximal piece $\delta$ contains $e$, there is some trail $\mathbf{w}$ with cycle sequence $\ev{\beta_1,\ldots,\beta_m = \delta }$ such that $g\ev{\mathbf{w}}=f\ev{\beta_1,\ldots,\beta_m}=P$. For the base case, assume that $m=1$ and consider the pyramid $P = \beta_1 $, a trivial heap with a single piece, containing $e$. Then, the trail $\mathbf{w} = \beta_1\ev{u}$ is in the preimage of $\beta_1$. For the inductive step, let $P$ be a pyramid on $m>1$ pieces, with maximal piece $\delta$ containing $e$. Let $\beta_{i_1},\ldots, \beta_{i_k}$, $k\geq 1$ be the pieces that $\delta$ covers. As $\delta$ covers $\beta_{i_1},\ldots,\beta_{i_k}$, they are not comparable to each other in the poset of $P$, and so, they are pairwise non-concurrent, by the definition of a heap. Let $v\in V\ev{ \delta } \cap \ew{  V\ev{\beta_{i_1}}\cup \ldots \cup V\ev{\beta_{i_k} } } $ be the first vertex that the trail $ \delta\ev{ u }$ visits. (It is possible that $ u = v $.) There is a unique $t$ with $1\leq t \leq k$ such that $v \in \beta_{i_t} =:\tilde{\delta} \lessdot \delta$. There is a unique edge $\tilde{e}$ of $\tilde{\delta}$, with terminal point $v$. Define $P_1:=\downset[P]{\tilde{\delta}} $, which is a pyramid with maximal piece $\tilde{\delta}$ containing the edge $\tilde{e}$. Also, $P_2:=P\setminus P_1$ is a pyramid with maximal piece $\delta$ and by \Cref{lem:push_down}, we have $P=P_1\circ P_2$. By the inductive hypothesis, there are some $a,b\geq 1$ with $a+b=m$ and some trails 
$$\mathbf{w}_1 \text{ ending at $\tilde{e}$ with cycle sequence } \ev{\widetilde{\beta}_1,\ldots,\widetilde{\beta}_{a-1}, \tilde{\delta} }$$ and $$\mathbf{w}_2 \text{ ending at $e$ with cycle sequence } \ev{\beta_1,\ldots,\beta_{b-1}, \delta} $$
such that $g\ev{\mathbf{w}_1}=P_1$ and $g\ev{\mathbf{w}_2}=P_2$. Since the first vertex $\delta$ visits in $V\ev{\delta} \cap V\ev{\tilde{\delta}}$ is $v$, it follows, by \Cref{rmk:to_be_lemma}, that the trail $\mathbf{w} = \mathbf{w}_1 \cdot \mathbf{w}_2 $ obtained by insertion has cycle sequence $$(\beta_1,\ldots,\beta_{r-1},\tilde{\delta},\gamma_1,\ldots,\gamma_{s-1},\delta)$$ and $$g(\mathbf{w}_1 \cdot \mathbf{w}_2) = P_1\circ P_2 = P$$

iii) $g\big\vert_{\mathcal{W}^{e}\ev{D}} $ is a injection: To show this claim, we define a left inverse for $g$. Given a pyramid $P \in \mymathbb{p}^{e}\ev{ D }$ with maximal piece $\delta$ containing $e$, we will recursively define a finite sequence of pieces $\ev{ a_i^{P} }_{i\geq 1}$ and root vertices $x_i^{P} \in V\ev{a_i^{P}}$, for $i\geq 1 $. 
Let $a_1 := \delta $ and $x_1  := u$, the terminal vertex of $e$. For each $i \geq 1$, if $a_i$ is minimal, stop. If, on the other hand, $a_i $ is not minimal in $P$, let $x_{i+1} $ be the first vertex that the simple closed trail $ a_i\ev{ x_i }$ visits in $V\ev{ a_i } \cap \bigcup_{ \beta \in \langle a_i  \rangle_{P}  } V\ev{ \beta }$. Let $a_{i+1} $ be the unique maximal piece $\beta$ in $\langle a_i  \rangle_{P}$ such that $x_{i+1} \in V\ev{ \beta }$. (The pieces $\{ \beta \in P: x_{i+1}\in V\ev{\beta}\}$ are pairwise concurrent and so, comparable in $P$, by the definition of a heap.) 

Next, we recursively define a function $h: \mymathbb{p}^{e}\ev{ D } \rightarrow \mathcal{W}^{e}\ev{D} $, where $h\ev{ \beta } = a_{k}^{P} \cdot h\ev{ P \setminus a_{k}^{P} }$. We claim that $h$ is a left-inverse of $g\big\vert_{\mathcal{W}^{e}\ev{D}}$. By induction on the number of pieces, we prove the claim: For all $\mathbf{w}\in \mathcal{W}^{e}\ev{D}$ such that $\text{cs}\ev{\mathbf{w}} = \ev{\beta_1,\ldots,\beta_m}$, we have $hg\ev{ \mathbf{w} } = \mathbf{w}$. The base case $m=1$ is clear. For the inductive step, assume $m\geq 2$. Let $g\ev{ \mathbf{w} } = \beta_1\circ\ldots \circ\beta_m =:P $ and consider the sequence of pieces $\ev{ a_i^{P} }_{i\geq 1}$. We apply reverse induction on $j$ and show that the sequence stops at $k$ and $a_{k+1-j}= \beta_{r\ev{j}}$ and $x_{k+1-j} = y_{j}$, for each $j=k,\ldots,1$. The base case, $j=k$ is clear. For the inductive hypothesis, assume that the statement holds for $ j\leq k $. By \Cref{lem:strict_down_set}, the elements less than $a_{k+1-j} = \beta_{r\ev{j}}$ are $\beta_1,\ldots,\beta_{r\ev{j}-1}$. The first vertex that the closed trail $ a_{k+1-j}\ev{ x_{k+1-j} } =  \beta_{r\ev{j}} \ev{ y_j }$ visits in $ V\ev{ a_i } \cap \bigcup_{ \beta \in \langle a_i  \rangle_{P}  } V\ev{ \beta } = V\ev{ \beta_{r\ev{j}} } \cap \bigcup_{ \beta \in \langle \beta_{r\ev{j}}  \rangle_{P}  } V\ev{ \beta }$ is $y_{j-1}$. Hence, we obtain $x_{k+1-(j-1)} = y_{j-1}$. Furthermore, $y_{j-1}\in \beta_{r\ev{j-1}}$ and any piece $\beta_i \in  \langle \beta_{r\ev{j}}  \rangle_{P} $ such that $y_{j-1} \in V\ev{\beta_i} $ satisfies $i\leq r\ev{j-1}$. For any $i_1 \leq i_2$, we have either $\beta_{i_1} \leq_P \beta_{i_2}$ or $\beta_{i_1}$ and $\beta_{i_2}$ are incomparable in $P$. Therefore, $a_{k-j} = \beta_{r\ev{j-1}}$ is the unique maximal piece in $ \langle \beta_{r\ev{j}}  \rangle_{P}  $ such that $x_{k-j} \in V\ev{ a_{k-j} }$, and the reverse induction is complete. Since $a_k = \beta_1$ is minimal in $P$, the sequence stops at $k$. Therefore,
\begin{align*}
&hg\ev{ \mathbf{w} } = h\ev{ \beta_1 \circ \ldots \circ \beta_m } \\
& = \beta_1\cdot h\ev{ \beta_2 \circ \ldots \circ \beta_m }  && \text{ by the definition of $h$, since $a_k^{P} = \beta_1$}\\ 
& = \beta_1 \cdot hg\ev{ \widetilde{\mathbf{w}}_{2} } \\ 
& = \beta_1 \cdot \widetilde{\mathbf{w}}_{2} && \text{ by the inductive hypothesis} \\ 
& = \textbf{ w }
\end{align*}

\end{proof}
As a consequence of \Cref{thm:pyramids_containing_edge} and \Cref{rmk:walks_circuits_bij}, note that the number of decomposition pyramids with maximal piece containing an edge $e\in E\ev{D}$ does not depend on $e$. Likewise, the number of decomposition pyramids with maximal piece containing a vertex $u\in V\ev{D}$ does not depend on $u$:
$$ |\mymathbb{dp}^{e}\ev{ D }| = | \mathcal{W}^{e}\ev{D} | = |\mathfrak{C}\ev{D}| \quad \text{ and } \quad  |\mymathbb{dp}^{u}\ev{ D }| = | \mathcal{W}^{u}\ev{D} | = \deg_D^{-}\ev{u} \cdot |\mathfrak{C}\ev{D}|$$

The bijection between decomposition pyramids of a digraph and its closed trails naturally translates into a bijection between decomposition pyramids of an undirected multi-graph and its closed trails (see \Cref{cor:bij_trails_pyramids}). We will further ``flatten" the edges of the pyramids and the closed trails of multi-graphs, to obtain a bijection involving pyramids and closed walks of a simple graph (cf. \Cref{cor:bij_walks_pyramids})

\begin{corollary}
\label{cor:bij_trails_pyramids}
Let $X$ be a connected multi-graph. Let $u\in V\ev{X}$ and $e\in E\ev{X}$. Define the mapping 
$$
h: \mathcal{T}\ev{X}\rightarrow \mathbbm{p}\ev{ X }
$$
defined as in \Cref{thm:pyramids_containing_edge}: $h\ev{\mathbf{w}} = \beta_1 \circ \ldots \circ \beta_m$ where $\ev{ \beta_1, \ldots, \beta_m}$ is the decomposition of $\mathbf{w}$ into (distinct) cycles. Then, the following restrictions are bijections: 
\begin{enumerate}
\item $h \big\vert_{\mathcal{W}^{e}\ev{X}} : \mathcal{W}^{e}\ev{X} \rightarrow \mymathbb{dp}^{e}\ev{ X }$ is a bijection between Eulerian trails of $X$ ending at $e$ and decomposition pyramids of $X$, with maximal piece containing $e$.
\item $h \big\vert_{\mathcal{W}^{u}\ev{X}} : \mathcal{W}^{u}\ev{X} \rightarrow \mymathbb{dp}^{u}\ev{ X }$ is a bijection between Eulerian trails of $X$ at $u$ and decomposition pyramids of $X$, with maximal piece containing $u$.
\end{enumerate}
\end{corollary}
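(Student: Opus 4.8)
The plan is to reduce the statement to the digraph case, \Cref{thm:pyramids_containing_edge}, by passing through the orientations $\mathcal{O}\ev{X}$ of $X$. Since an orientation $O\in\mathcal{O}\ev{X}$ has $V\ev{O}=V\ev{X}$ and $E\ev{O}=E\ev{X}$, every closed trail of $O$ is literally a closed trail of $X$ (the orientation merely constrains the direction in which each edge is traversed). Conversely, an Eulerian trail $\mathbf{w}$ of $X$ (a closed trail with $E\ev{\mathbf{w}}=E\ev{X}$) induces a unique orientation $O_\mathbf{w}\in\mathcal{O}\ev{X}$, in which each edge points the way $\mathbf{w}$ runs along it, and then $\mathbf{w}\in\mathcal{W}\ev{O_\mathbf{w}}$. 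These two operations are mutually inverse, and $O_\mathbf{w}$ is determined by $\mathbf{w}$, so
$$ \mathcal{W}^{e}\ev{X}=\bigsqcup_{O\in\mathcal{O}\ev{X}}\mathcal{W}^{e}\ev{O} \qquad\text{and}\qquad \mathcal{W}^{u}\ev{X}=\bigsqcup_{O\in\mathcal{O}\ev{X}}\mathcal{W}^{u}\ev{O}, $$
where in the first union the summand for an orientation in which $e$ points from $z_1$ to $z_2$ collects exactly the trails of $\mathcal{W}^{e}\ev{X}$ terminating at $z_2$. Only Eulerian orientations contribute nonempty summands, and such an $O$ is connected because $X$ is, so \Cref{thm:pyramids_containing_edge} applies to each summand.

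Next I would record the matching decomposition on the pyramid side. Recall that a cycle of a multi-graph records its cyclic direction of traversal, so that $[\mathbf{w}]_{\rho}$ and the class of the reversal of $\mathbf{w}$ are distinct elements of $\mathcal{B}\ev{X}$; hence orienting the edges of a cycle $\beta$ along that direction turns $\beta$ into a directed cycle, and this correspondence is reversible. A decomposition pyramid $P\in\mymathbb{dp}^{e}\ev{X}$ partitions $E\ev{X}$ among its pieces, so orienting each edge according to the unique piece containing it yields an orientation $O_P\in\mathcal{O}\ev{X}$; replacing each piece by the corresponding directed cycle of $O_P$ produces a decomposition pyramid of $O_P$ on the same ground poset, since the concurrence relation is ``sharing a vertex'' and the vertex sets of the pieces are unchanged. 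This is a bijection onto $\mymathbb{dp}^{e}\ev{O_P}$, with inverse ``forget the digraph structure but keep the cyclic direction of each piece'', and $O_P$ is recovered from $P$. Consequently
$$ \mymathbb{dp}^{e}\ev{X}=\bigsqcup_{O\in\mathcal{O}\ev{X}}\mymathbb{dp}^{e}\ev{O} \qquad\text{and}\qquad \mymathbb{dp}^{u}\ev{X}=\bigsqcup_{O\in\mathcal{O}\ev{X}}\mymathbb{dp}^{u}\ev{O}, $$
with empty summands for non-Eulerian orientations on this side as well.

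Then I would check that $h$ respects these two decompositions and agrees, summand by summand, with the digraph map $g$ of \Cref{thm:pyramids_containing_edge}. The cycle-decomposition procedure of \Cref{def:cycle_seq} depends only on the vertex--edge sequence of a trail and ignores any orientation, so for $\mathbf{w}\in\mathcal{W}^{e}\ev{O}\subseteq\mathcal{W}^{e}\ev{X}$ the simple closed subtrails $\mathbf{w}_1,\ldots,\mathbf{w}_m$ it extracts are literally the same whether $\mathbf{w}$ is read in $X$ or in $O$; under the cycle correspondence above $[\mathbf{w}_i]_{\rho}$ (a cycle of $X$) corresponds to the directed cycle $[\mathbf{w}_i]_{\rho}$ of $O$, and since $\circ$ is built from the same concurrence relation on the same vertex set, $h\ev{\mathbf{w}}$ corresponds to $g\ev{\mathbf{w}}$ under the pyramid bijection. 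Since $\mathbf{w}$ is Eulerian its pieces partition $E\ev{X}$, and by \Cref{lem:cycle_seq} applied in $O$ the maximal piece contains $e$, so $h\ev{\mathbf{w}}$ indeed lies in the $O$-summand of $\mymathbb{dp}^{e}\ev{X}$. Hence $h\big\vert_{\mathcal{W}^{e}\ev{X}}$ is, under the identifications above, the disjoint union over $O\in\mathcal{O}\ev{X}$ of the bijections $g\big\vert_{\mathcal{W}^{e}\ev{O}}\colon\mathcal{W}^{e}\ev{O}\to\mymathbb{dp}^{e}\ev{O}$ furnished by \Cref{thm:pyramids_containing_edge}; a disjoint union of bijections over a common index set is a bijection, which gives part (1), and part (2) is the same argument with $u$ in place of $e$. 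The same reduction shows $h$ is well defined as a map $\mathcal{T}\ev{X}\to\mathbbm{p}\ev{X}$: any closed trail $\mathbf{w}$ of $X$ is a closed trail of the digraph obtained by orienting the edges it traverses, and the composition of its cycle sequence is a pyramid by the argument in part i) of the proof of \Cref{thm:pyramids_containing_edge}.

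I expect the only real work to be bookkeeping rather than a genuine difficulty: one must take care that the elements of $\mathcal{B}\ev{X}$ remember their cyclic orientation, so that the ``forget orientation'' maps on cycles and on pyramids are bijections and not two-to-one, and that the decompositions of $\mathcal{W}^{e}\ev{X}$ and of $\mymathbb{dp}^{e}\ev{X}$ are indexed by one and the same set $\mathcal{O}\ev{X}$, with empty summands on both sides for non-Eulerian orientations, so that the summand-by-summand comparison with $g$ is legitimate.
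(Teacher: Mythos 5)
Your proposal is correct and follows essentially the same route as the paper: the paper also reduces to \Cref{thm:pyramids_containing_edge} via the correspondence $\mathcal{W}\ev{X}\leftrightarrow\bigsqcup_{D\in\mathcal{O}\ev{X}}\mathcal{W}\ev{D}$ (its maps $\mathcal{J},\mathcal{L}$) and a map $\mathcal{K}$ identifying decomposition pyramids of orientations with those of $X$, concluding from the commutative diagram $h=\mathcal{K}\circ g\circ\mathcal{J}$. Your summand-by-summand bookkeeping, including the observation that cycles of $X$ retain their traversal direction so the forgetful maps are bijections rather than two-to-one, is exactly the ``routine check'' the paper leaves implicit.
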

See \hyperlink{proof:bijtrailspyramids}{proof} on p.~\pageref*{proof:bij_trails_pyramids_pf}. 

\begin{remark}
Let $\mymathbb{ip}^{e}\ev{ X }$ be the set of injective pyramids of $X$, i.e., the pyramids $P = \ev{ \Omega, \leq, \ell } \in \mathbbm{p}\ev{\mathcal{B}\ev{X},\mathcal{R}} $ with $\ell$ injective. \Cref{cor:bij_trails_pyramids} applied to sub-multi-hypergraphs of $X$ yields bijections:
\begin{enumerate}
\item $h \big\vert_{\mathcal{T}^{e}\ev{X}} : \mathcal{T}^{e}\ev{X} \rightarrow \mymathbb{ip}^{e}\ev{ X }$ between closed trails of $X$ ending at $e$ and injective pyramids of $X$ with maximal piece containing $e$. 
\item $h \big\vert_{\mathcal{T}^{u}\ev{X}} : \mathcal{T}^{u}\ev{X} \rightarrow \mymathbb{ip}^{u}\ev{ X }$ between closed trails of $X$ at $u$ and injective pyramids of $X$ with maximal piece containing $u$. 
\end{enumerate}
(Similar considerations apply to a digraph.)
\end{remark}

Trails of a multi-graph do not visit the same edge more than once. Given a simple graph $G$, we have a bijection (cf. \Cref{cor:bij_walks_pyramids} below.) featuring walks in which the same edge may be visited more than once and pyramids made of not necessarily distinct pieces. Given a sequence of vertices of a simple graph $G$, there is at most one walk visiting the sequence of vertices in the given order. This means that a walk is uniquely determined by its sequence of vertices.

Let $G$ be a connected simple graph of rank $2$. As defined in Section \ref{sec:preliminaries_for_graphs}, the set of cycles of $G$ is denoted $\mathcal{B}\ev{G}$. Since the graph $G$ is simple, it can not have a digon (a cycle of length $2$). However, walks of a graph of length $2$ up to cyclic permutation of edges, called \textit{edgegons}, are included in the set of pieces of $G$: Let $\mathcal{Q}\ev{G} := \{ [\ev{ v_0,e,v_1,e,v_0 }]_{\rho}: e = \{ v_0,v_1 \} \in E\ev{G} \} $. Write $\widetilde{B}\ev{ G } := \mathcal{B}\ev{ G } \sqcup \mathcal{Q}\ev{G} $ for the union of the set of cycles (of length $\geq 3$) and the set of edgegons of $G$. (We have $[\ev{ v_0,e,v_1,e,v_0 }]_{\rho} = [\ev{ v_1,e,v_0,e,v_1 }]_{\rho}$, for each edge $e = \{ v_0,v_1 \} \in E\ev{G}$ and necessarily, a graph $G$ has $|E\ev{G}|$ many edgegons.)
\begin{example}
\begin{enumerate}
\item As an example, we consider a simple graph, namely, the path of length $2$. 
\begin{figure}[H]
\hspace{6cm} \begin{tikzpicture}
\pic {graph_1};
\end{tikzpicture}
\end{figure}
\vspace{-0.5cm}
\begin{align*}
& \mathcal{U} \ev{G} = \{ \ \ev{ 1 \ e \ 2 \ f \ 3 \ f \ 2 \ e \ 1},\  \ev{ 2\ e\ 1\ e\ 2\ f\ 3\ f\ 2}, \ \ev{ 2\ f\ 3\ f\ 2\ e\ 1\ e\ 2}, \ \ev{ 3\ f\ 2\ e\ 1\ e\ 2\ f\ 3} \ \} \\ 
& \widetilde{\mathcal{B}}\ev{ G } = \{ \ [ \ev{ 1\ e\ 2\ e\ 1 } ]_\rho, [ \ev{ 2\ f\ 3\ f\ 2 } ]_\rho \ \}
\end{align*}

\item Consider the simple graph $G$, obtained by appending an edge to a triangle. We list the pieces of $G$ below. 
\begin{figure}[ht]
\hspace{6cm} \begin{tikzpicture}
\pic {graph_2};
\end{tikzpicture}
\end{figure}
\vspace{-0.2cm}
$$
\widetilde{\mathcal{B}}\ev{ G } = \{ \ [ \ev{ 1\ e\ 2\ e\ 1 } ]_\rho, [ \ev{ 2\ f\ 3\ f\ 2 } ]_\rho, [ \ev{ 1\ g\ 3\ g\ 1 } ]_\rho, [ \ev{ 1\ a\ 4\ a\ 1 } ]_\rho, [ \ev{ 1\ e\ 2\ f\ 3 \ g\ 1} ]_\rho, [ \ev{ 1\ g\ 3\ f\ 2 \ e\ 1 } ]_\rho \ \}
$$
\end{enumerate}
\end{example}
We note that, although non-trail walks of non-simple multi-graphs and digraphs are defined in theory, we will not need them in this paper. For simple graphs, the set of walks and the set of trails are used in different settings. 
\begin{remark}
By Veblen's Theorem (\cite[p. 87]{veblen}), a connected simple graph $G$ has an Eulerian trail, i.e., $\mathcal{W}\ev{G} \neq \emptyset $ if and only if it is obtained by an edge-disjoint union of cycles of length $\geq 3$. 
\end{remark}

Next, we derive a bijection involving walks of a simple graph $G$ and pyramids made of cycles and edgegons.

\begin{corollary}
\label{cor:bij_walks_pyramids}

Let $G$ be a simple connected graph, $u\in V\ev{G}$ and $e\in E\ev{ G}$. Define the mapping 
$$
q: \mathcal{U}\ev{G} \rightarrow \mathbbm{p}\ev{ G }
$$
defined as in \Cref{thm:pyramids_containing_edge}: $q\ev{\mathbf{w}} = \beta_1 \circ \ldots \circ \beta_m$ where $\ev{ \beta_1, \ldots, \beta_m}$ is the decomposition of $\mathbf{w}$ into (not necessarily distinct) cycles and edgegons. 
Then, the following restrictions are bijections: 
\begin{enumerate}
\item $q \big\vert_{\mathcal{U}^{e}\ev{G}} : \mathcal{U}^{e}\ev{G} \rightarrow \mathbbm{p}^{e}\ev{ \widetilde{\mathcal{B}}\ev{ G }  }$ is a bijection between closed walks of $G$ ending at $e$ and pyramids of $G$, with maximal piece containing $e$. 
\item $q \big\vert_{\mathcal{U}^{u}\ev{G}} : \mathcal{U}^{u}\ev{G} \rightarrow \mathbbm{p}^{u}\ev{ \widetilde{\mathcal{B}}\ev{ G }  }$ is a bijection between closed walks of $G$ at $u$ and pyramids of $G$, with maximal piece containing $u$. 
\end{enumerate}
\end{corollary}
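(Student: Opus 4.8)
The plan is to reduce the statement to \Cref{cor:bij_trails_pyramids} by \emph{lifting} closed walks of the simple graph $G$ to Eulerian trails of auxiliary multi-graphs and then \emph{flattening} the resulting decomposition pyramids piece by piece; I will describe part (1) and note that part (2) is entirely analogous. First I would fix $\mathbf{w}\in\mathcal{U}^{e}\ev{G}$, write $m_{\mathbf{w}}(f)$ for the number of times $\mathbf{w}$ traverses an edge $f\in E\ev{G}$, and form the multi-graph $X_{\mathbf{w}}$ on $V\ev{\mathbf{w}}$ obtained by replacing each traversed edge $f$ with $m_{\mathbf{w}}(f)$ parallel copies $f^{(1)},\dots,f^{(m_{\mathbf{w}}(f))}$. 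Relabelling the $j$-th traversal of $f$ along $\mathbf{w}$ by $f^{(j)}$ turns $\mathbf{w}$ into an \emph{Eulerian} closed trail $\hat{\mathbf{w}}$ of the connected multi-graph $X_{\mathbf{w}}$, ending at the copy $\hat e$ of $e$ used in its last step. Applying \Cref{cor:bij_trails_pyramids}(1) to $X_{\mathbf{w}}$ then produces a decomposition pyramid $\tilde P:=h_{X_{\mathbf{w}}}(\hat{\mathbf{w}})\in\mymathbb{dp}^{\hat e}\ev{X_{\mathbf{w}}}$.

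Next I would flatten. Every piece of $\tilde P$ is a cycle of $X_{\mathbf{w}}$: one of length $2$ is a digon on two parallel copies of some $f$ and flattens to the edgegon $[\ev{v_0,f,v_1,f,v_0}]_{\rho}\in\mathcal{Q}\ev{G}$, while one of length $\geq 3$ has pairwise distinct vertices, so — because $G$ is simple — it flattens to a bona fide cycle of $G$ of length $\geq 3$. Since flattening does not change the vertex-set of any piece, it preserves the concurrence relation, so replacing the (injective) labelling of $\tilde P$ by its flattening gives a heap $\bar P\in\mathbbm{h}\ev{\widetilde{\mathcal{B}}\ev{G}}$ on the \emph{same} poset; hence $\bar P$ is again a pyramid, and its maximal piece contains $e$. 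To identify $\bar P$ with $q\ev{\mathbf{w}}$ I would observe that the first-repeated-vertex procedure of \Cref{def:cycle_seq} depends only on the sequence of vertices of a walk — and $\mathbf{w}$ and $\hat{\mathbf{w}}$ have the same vertex sequence — so $\textup{cs}\ev{\hat{\mathbf{w}}}$ is exactly the lift of the decomposition of $\mathbf{w}$ into cycles and edgegons, and composition of heaps commutes with flattening. This already shows that $q$ restricts to a well-defined map $\mathcal{U}^{e}\ev{G}\to\mathbbm{p}^{e}\ev{\widetilde{\mathcal{B}}\ev{G}}$.

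For bijectivity I would run the construction backwards. Given $P\in\mathbbm{p}^{e}\ev{\widetilde{\mathcal{B}}\ev{G}}$, build the multi-graph $X_P$ with one parallel copy of $f\in E\ev{G}$ for each cycle-piece of $P$ through $f$ and two copies for each edgegon-piece of $P$ on $f$; after fixing some naming of these copies (say, ordering the pieces of $P$ meeting $f$ by a linear extension of $P$ and giving each edgegon's two copies consecutive indices), the labelling of $P$ becomes an injective labelling by $\mathcal{B}\ev{X_P}$ and so determines a decomposition pyramid $\tilde P$ of $X_P$ whose maximal piece is a copy $\hat e$ of $e$. Applying the inverse of \Cref{cor:bij_trails_pyramids}(1) and then flattening the resulting Eulerian trail yields a closed walk $\mathbf v\in\mathcal{U}^{e}\ev{G}$ with $X_{\mathbf v}=X_P$. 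Checking that $\mathbf w\mapsto\bar P$ and $P\mapsto\mathbf v$ are mutually inverse then comes down to the fact that flattening a trail forgets the names of the parallel copies, so the final walk, resp.\ pyramid, is independent of the arbitrary choices made in forming $X_{\mathbf w}$, resp.\ $X_P$.

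The step I expect to be the main obstacle is exactly this last point: verifying that the passage through auxiliary multi-graphs is well defined, i.e.\ that no two distinct closed walks of $G$ flatten (after the canonical copy-assignment) to the same decomposition pyramid of some $X$ and that every element of $\mathbbm{p}^{e}\ev{\widetilde{\mathcal{B}}\ev{G}}$ is obtained — equivalently, that the lift $\mathbf{w}\mapsto(X_{\mathbf{w}},\hat{\mathbf{w}})$ and the chosen un-flattening $P\mapsto(X_P,\tilde P)$ become mutually inverse once parallel-edge labels are quotiented out. If this bookkeeping proves awkward, a safe fallback is to avoid the auxiliary multi-graphs altogether and re-run the proof of \Cref{thm:pyramids_containing_edge} verbatim with ``walk'' in place of ``trail'' and ``cycle or edgegon'' in place of ``cycle'': the only substantive change is that the uniqueness input ``a trail ending at $e$ is determined by its edge-set'' must be replaced by ``a walk is determined by its sequence of vertices'', which holds precisely because $G$ is simple, after which \Cref{lem:cycle_seq}, \Cref{lem:strict_down_set} and \Cref{lem:terminal_subtrail} carry over unchanged.
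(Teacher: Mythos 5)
Your proposal is correct and follows essentially the same route as the paper: the paper's proof likewise lifts each closed walk of $G$ to an Eulerian trail of a multigraph (infragraph) via a map $\mathcal{J}$, applies \Cref{cor:bij_trails_pyramids}, and flattens the resulting decomposition pyramid back to a pyramid over $\widetilde{\mathcal{B}}\ev{G}$. The only difference is bookkeeping at the step you flag as the main obstacle: where you fix canonical labelings of the parallel copies and argue the arbitrary choices wash out after flattening, the paper quotients both the Eulerian trails and the decomposition pyramids by parallelism (each equivalence class having size $M_X$), so that the lift, the trail--pyramid bijection, and the flattening descend to a commutative square of bijections.
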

See \hyperlink{proof:bijwalkspyramidspf}{proof} on p.~\pageref*{proof:bij_walks_pyramids_pf}. 

\subsection{The Characteristic Polynomial of a Graph as a sum over Trivial Heaps}
We copy below \cite[Corollary 4.5]{krattenthaler}:
\begin{theorem}[Viennot, 1986]
\label{thm:viennot_krattenthaler}
Let $\mathcal{B}$ be a set of pieces with concurrence relation $\mathcal{R}$. Let $\mathcal{B}_0 \subseteq \mathcal{B}$ be a subset. Let $w: \mathcal{B}\rightarrow R$ be a weight function, where $R$ is a ring with unity. Define the weight of a heap as the product of the weights of its pieces. Then,
\begin{enumerate}
\item 
$$
\sum_{\substack{H \in \mathbbm{h}(\mathcal{B} ) \\ \mathcal{M}\ev{H} \subseteq \mathcal{B}_0}} w(H)=\frac{\sum_{T \in \mathbbm{t}(\mathcal{B} \backslash \mathcal{B}_0 )}(-1)^{|T|} w(T)}{\sum_{T \in \mathbbm{t}(\mathcal{B} )}(-1)^{|T|} w(T)}
$$
\item In particular, substituting $\mathcal{B}_0 = \mathcal{B}$,
$$
\sum_{H \in \mathbbm{h}(\mathcal{B} )} w(H)=\frac{1}{\sum_{T \in \mathbbm{t}(\mathcal{B} )}(-1)^{|T|} w(T)}
$$
\item 
$$ 
\log \ew{ \sum_{T \in \mathbbm{t}(\mathcal{B} )}(-1)^{|T|} w(T) } = -\sum_{P \in \mathbbm{p}(\mathcal{B} )} \dfrac{1}{ |P| } w(P)
$$
\end{enumerate}

\end{theorem}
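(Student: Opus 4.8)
The plan is to obtain part~(3) from part~(2), and part~(2) from part~(1), all three resting on the same ``peel off an antichain of extremal pieces'' cancellation. I would first prove~(1): setting $Z := \sum_{T \in \mathbbm{t}(\mathcal{B})}(-1)^{|T|}w(T)$, it suffices to verify the cross-multiplied identity
\[
Z \cdot \sum_{\substack{H \in \mathbbm{h}(\mathcal{B})\\ \mathcal{M}(H)\subseteq\mathcal{B}_0}} w(H) \;=\; \sum_{T \in \mathbbm{t}(\mathcal{B}\setminus\mathcal{B}_0)}(-1)^{|T|}w(T).
\]
Since $w$ is multiplicative, the left side expands as $\sum_{(T,H)}(-1)^{|T|}w(T\circ H)$ over pairs with $T$ trivial and $\mathcal{M}(H)\subseteq\mathcal{B}_0$, composing $T$ at the bottom. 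Reindexing by $K := T\circ H$, the composition rules force the pieces of $T$ to be exactly a subset $S$ of $\min(K)$ (two distinct elements of which must have non-concurrent labels, so $T$ is legitimately trivial), and $H = K\setminus S$ is recovered uniquely; moreover deleting minimal elements creates no new maximal ones, so $\mathcal{M}(K\setminus S) = \mathcal{M}(K)\setminus S$. Hence the coefficient of $w(K)$ is $\sum_{S}(-1)^{|S|}$ over $S$ with $M_0 \subseteq S\subseteq\min(K)$, where $M_0 := \{x\in\mathcal{M}(K): \ell(x)\notin\mathcal{B}_0\}$; this alternating sum vanishes unless $\min(K) = M_0$, which happens precisely when $K$ is a trivial heap with all labels outside $\mathcal{B}_0$, yielding the right-hand side. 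Part~(2) is the specialisation $\mathcal{B}_0 = \mathcal{B}$, where the numerator collapses to $w(\emptyset)=1$, giving $\sum_{H\in\mathbbm{h}(\mathcal{B})}w(H) = Z^{-1}$.

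For part~(3) I would introduce a bookkeeping variable, replacing each $w(\beta)$ by $x\,w(\beta)$ and working in $R[[x]]$, so a heap of size $n$ carries $x^n$. Put $Z(x) := \sum_T(-1)^{|T|}x^{|T|}w(T)$ and $G(x) := \sum_H x^{|H|}w(H)$; by part~(2), $Z(x)G(x)=1$, and since $G(0)=w(\emptyset)=1$ both $G(x)^{-1}$ and $\log G(x)$ are well-defined in $R[[x]]$. It then suffices to show
\[
x\frac{d}{dx}\log G(x) \;=\; \sum_{P\in\mathbbm{p}(\mathcal{B})} x^{|P|}w(P),
\]
since integrating (every pyramid has at least one piece) gives $\log G(x) = \sum_{P}\frac{x^{|P|}}{|P|}w(P)$, whence $\log Z(x) = -\log G(x) = -\sum_{P}\frac{1}{|P|}x^{|P|}w(P)$, which is the stated identity once $x$ is read as the grading parameter. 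Now $x\frac{d}{dx}\log G = \frac{xG'(x)}{G(x)} = xG'(x)\,Z(x)$, and $xG'(x) = \sum_{H}|H|x^{|H|}w(H)$ is the generating function of heaps carrying one distinguished (pointed) piece $\beta_0$. Expanding the product over pairs consisting of a pointed heap $(H,\beta_0)$ and a trivial heap $T$, and reindexing by $K := H\circ T$ — this time composing $T$ \emph{on top} — the pieces of $T$ form a subset $S\subseteq\mathcal{M}(K)$ with $\beta_0\notin S$, and $H = K\setminus S$ is recovered uniquely. The contribution of a fixed pointed heap $(K,\beta_0)$ is therefore $w(K)\sum_{S\subseteq\mathcal{M}(K)\setminus\{\beta_0\}}(-1)^{|S|}$, which vanishes unless $\mathcal{M}(K)\setminus\{\beta_0\}=\emptyset$; since $K\ni\beta_0$ is nonempty this means $\mathcal{M}(K)=\{\beta_0\}$, i.e.\ $K$ is a pyramid pointed at its unique maximal piece. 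Summing the survivors over all pyramids gives exactly $\sum_{P}x^{|P|}w(P)$, as required.

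The main obstacle is the structural bookkeeping for the composition of heaps underlying every step: one must check carefully that (a) when $T$ is trivial its pieces land on an antichain of extremal (bottom, resp.\ top) elements of $T\circ H$, resp.\ $H\circ T$, with no spurious relations introduced; (b) every composite heap $K$ together with a choice of such an antichain $S$ comes from a \emph{unique} pair, namely $K\setminus S$ and the trivial heap on $S$, so that the summation ranges on the two sides match exactly; and (c) — needed only for part~(1) — deleting minimal elements preserves the maximality status of the rest, so the constraint $\mathcal{M}(H)\subseteq\mathcal{B}_0$ transports cleanly to $K$. These are precisely the facts packaged by \Cref{lem:push_down} and the composition rules recalled just before it. A secondary, purely formal point — legitimacy of $\log$, of $\frac{d}{dx}$, and of rearranging the sums — is dispatched by working degreewise in $R[[x]]$, where each coefficient is a finite sum (the set $\mathcal{B}$ of pieces being finite in all cases of interest here).
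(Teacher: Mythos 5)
Your proof is correct. Note that the paper contains no internal proof of \Cref{thm:viennot_krattenthaler} to compare against: the statement is quoted from Krattenthaler's Corollary 4.5 (attributed to Viennot) and used as a black box, and what you have written is essentially the standard proof of that cited result — part (1) by cross-multiplying and cancelling over pairs $(K,S)$ with $S$ a set of minimal elements forced to contain the maximal elements labelled outside $\mathcal{B}_0$, and part (3) by pointing a piece and computing $x\frac{d}{dx}\log G$. The structural facts you isolate do check out: any subset of $\min(K)$ (resp.\ $\mathcal{M}(K)$) is an antichain whose labels are pairwise non-concurrent by the first heap axiom, deleting such a subset and recomposing recovers $K$ by the same argument as \Cref{lem:push_down}, and removing minimal elements creates no new maximal ones. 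Two small caveats: the rearrangements and the $1/|P|$ in part (3) implicitly require $R$ to be commutative and torsion-free over $\mathbb{Q}$ (the paper elsewhere assumes a commutative ring with unity), and in the hypergraph application the set of pieces $\textup{Inf}^1(\mathcal{H})$ is infinite, so the degreewise finiteness should be taken with respect to the weight grading (the $t$-degree) rather than the number of pieces; neither point affects the argument.
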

We intend to apply \Cref{thm:viennot_krattenthaler} to the characteristic polynomial of a simple graph $G$ of order $n$ and rank $2$, which is defined as the determinant $\text{det}\ev{t \cdot I_n - \mathbb{A}_G}$, where $I_n$ is the identity matrix of dimension $n$ and $\mathbb{A}_G$ is the adjacency matrix of $G$. We use the notation $\phi_G\ev{t}$ for the characteristic polynomial of $G$. 

In \cite{harary} by Harary and in \cite{sachs} by Sachs, the following theorem is proven, which calculates the coefficients of the characteristic polynomial $G$ in terms of the counts of the elementary subgraphs of $G$: 
\begin{theorem}[Harary-Sachs Theorem for Graphs of rank $2$]
\label{harary_sachs_ordinary}
Call a graph $X$ \textup{elementary} if it is a disjoint union of edges and cycles. Let $\mathcal{E}_d$ be the set of isomorphism classes of elementary graphs with $d\geq 0$ edges. 

Given a graph $G$ of order $n$, then we have the following formula for the co-degree $d$-th coefficient of $\phi_G \ev{ t }$:
$$[t^{n-d}] \phi_G \ev{ t } = \sum_{ X \in \mathcal{E}_d }  \ev{-1}^{c\ev{X}} 2^{z\ev{X}} \genfrac{\lbrack}{\rbrack}{0pt}{0}{G}{X} $$
where
\begin{enumerate}
\item $c\ev{X}$ is the number of components of the elementary graph $X$
\item $z\ev{X}$ is the number of cycles in $X$ of length $\geq 3$
\item $\genfrac{\lbrack}{\rbrack}{0pt}{0}{G}{X} $ is the number of isomorphic copies of $X$ in $G$
\end{enumerate} 
\end{theorem}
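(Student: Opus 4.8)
The plan is to prove \Cref{harary_sachs_ordinary} by the classical argument, directly from the Leibniz (permutation) expansion of $\phi_G(t)=\det(tI_n-\mathbb{A}_G)$. Write $\mathbb{A}:=\mathbb{A}_G$. Since $G$ is simple, $\mathbb{A}_{ii}=0$, so in
\[
\phi_G(t)=\sum_{\sigma\in S_n}\operatorname{sgn}(\sigma)\prod_{i=1}^{n}\bigl(t\,\delta_{i\sigma(i)}-\mathbb{A}_{i\sigma(i)}\bigr),
\]
every factor with $\sigma(i)=i$ equals $t$ and every factor with $\sigma(i)\neq i$ equals $-\mathbb{A}_{i\sigma(i)}$. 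Letting $\operatorname{supp}(\sigma)$ be the set of non-fixed points and $\operatorname{fix}(\sigma)=n-|\operatorname{supp}(\sigma)|$, this rewrites as
\[
\phi_G(t)=\sum_{\sigma\in S_n}\operatorname{sgn}(\sigma)\,(-1)^{|\operatorname{supp}(\sigma)|}\,t^{\operatorname{fix}(\sigma)}\prod_{i\in\operatorname{supp}(\sigma)}\mathbb{A}_{i\sigma(i)},
\]
and hence $[t^{n-d}]\phi_G(t)$ is exactly the subsum over $\sigma$ with $|\operatorname{supp}(\sigma)|=d$.

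The next step is to identify which such $\sigma$ contribute and to organize them by the subgraph they trace. The product $\prod_{i\in\operatorname{supp}(\sigma)}\mathbb{A}_{i\sigma(i)}$ is $0/1$-valued and equals $1$ exactly when every nontrivial cycle $(i_1\,i_2\,\cdots\,i_\ell)$ of $\sigma$ traces a closed walk $i_1-i_2-\cdots-i_\ell-i_1$ in $G$; since the $i_j$ within one cycle are distinct and the cycles of a permutation are supported on disjoint sets, such a $2$-cycle is precisely an edge of $G$ and such an $\ell$-cycle with $\ell\geq 3$ is precisely an $\ell$-cycle subgraph of $G$. Thus $\operatorname{supp}(\sigma)$, together with these components, is an elementary subgraph $X(\sigma)\subseteq G$ on $d$ vertices. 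Conversely, for a fixed elementary subgraph $X\subseteq G$ on $d$ vertices, the permutations $\sigma$ with $X(\sigma)=X$ are obtained by orienting the components: an edge-component forces a unique transposition, while a cycle-component of length $\geq 3$ admits exactly its two cyclic orientations, so there are exactly $2^{z(X)}$ such $\sigma$, where $z(X)$ is the number of cycle-components of $X$.

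Finally one handles the sign. A permutation whose support, of size $d$, splits into $c(X)$ cycles has $\operatorname{sgn}(\sigma)=(-1)^{d-c(X)}$, and this is unchanged by reorienting a cycle (a cycle and its reverse have the same length), so every one of the $2^{z(X)}$ terms arising from $X$ carries the same total sign $\operatorname{sgn}(\sigma)\,(-1)^{|\operatorname{supp}(\sigma)|}=(-1)^{d-c(X)}(-1)^{d}=(-1)^{c(X)}$ and the same product value $1$. Summing the $2^{z(X)}$ equal contributions, then over all subgraphs $X\subseteq G$, and finally grouping isomorphic copies — each isomorphism class $X$ occurring $\genfrac{\lbrack}{\rbrack}{0pt}{0}{G}{X}$ times — yields
\[
[t^{n-d}]\phi_G(t)=\sum_{X\in\mathcal{E}_d}(-1)^{c(X)}\,2^{z(X)}\,\genfrac{\lbrack}{\rbrack}{0pt}{0}{G}{X}.
\]

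The only delicate point is the bookkeeping in the last two paragraphs: checking that the preimage of each $X$ has size exactly $2^{z(X)}$, and that the three sources of sign — $\operatorname{sgn}(\sigma)$, the $(-1)^{|\operatorname{supp}(\sigma)|}$ from the off-diagonal entries, and the parity of the cycle count — collapse uniformly to $(-1)^{c(X)}$ for all preimages at once; everything else is routine. One could alternatively extract the identity from the heaps formula of \Cref{thm:viennot_krattenthaler} applied to the cycle/edgegon pieces of $G$, but the determinant expansion is the most direct route for this preliminary fact.
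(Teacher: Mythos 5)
Your proof is the classical determinant-expansion argument, and its core is sound; but be aware that the paper itself never proves \Cref{harary_sachs_ordinary} --- it is quoted from Harary and Sachs as a known input --- so there is no internal proof to compare against. Your instinct at the end is also the right one for this paper's logic: deriving the identity from \Cref{thm:viennot_krattenthaler} would be backwards here, since the paper uses Harary--Sachs (together with the orientation count $2^{z(X)}$) to \emph{obtain} the trivial-heaps expansion of $\widetilde{\phi}_G$, not the other way around. So an independent Leibniz-expansion proof is exactly the appropriate route, and your handling of the sign ($\operatorname{sgn}(\sigma)(-1)^{|\operatorname{supp}(\sigma)|}=(-1)^{c(X)}$) and of the $2^{z(X)}$ preimage count is correct.

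The one point you must reconcile is the grading. Your expansion indexes contributions by $|\operatorname{supp}(\sigma)|$, i.e.\ by the number of \emph{vertices} of the elementary subgraph $X(\sigma)$, and that is indeed what multiplies $t^{n-d}$: a single-edge component comes from a transposition, occupies two fixed-point slots, and hence contributes to $[t^{n-2}]$, not $[t^{n-1}]$. The statement, however, defines $\mathcal{E}_d$ by ``$d$ edges''; this only matches your derivation if each $K_2$-component is counted with multiplicity two (the doubled-edge/Veblen convention the paper uses implicitly: the edgegon piece carries weight $t^{-2}$, and in the $K_4$ example the disjoint union of two edges contributes to the $t^{-4}$ coefficient). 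As written, your final display asserts a sum over ``elementary graphs with $d$ edges'' while your bookkeeping established a sum over elementary subgraphs on $d$ vertices --- e.g.\ a single edge lies in $\mathcal{E}_1$ under the literal reading but contributes to $[t^{n-2}]$. Fix this by either stating explicitly that $d$ counts the vertices of $X$ (equivalently its edges with each edge-component doubled), or by adopting the doubled-edge convention up front; with that clarification the proof is complete.
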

We divide the characteristic polynomial by its degree (for a simple graph, the degree is the order of the graph) and obtain the following function:
$$\widetilde{\phi}_\mathcal{H}\ev{t}:=t^{-\deg\ev{\phi_{\mathcal{H}}\ev{t}}}\phi_\mathcal{H}\ev{t}$$

Next, we apply \Cref{thm:viennot_krattenthaler} and obtain equations relating number of walks and sums of heaps. Let $G$ be a simple graph. For each cycle $\beta \in \widetilde{\mathcal{B}}\ev{G}$, we let $w\ev{\beta}:= t^{ - |E\ev{ \beta }| }$, where $|E\ev{ \beta }|$ is the length of $\beta$. Recall that the weight of a heap is defined as the product of the weights of its pieces.
\subsubsection*{Trivial Heaps}
Elementary subgraphs of $G$ are of the form $X = Y_1\sqcup \ldots \sqcup Y_r \sqcup Z_1 \sqcup \ldots \sqcup Z_s \in \mathcal{E}\ev{ G }$, where $\{ Y_i \}_{i=1}^{r}$ are edges and $\{ Z_j \}_{j=1}^{s}$ are cycles of length $\geq 3$. By orienting the cycles in two directions, we can make $2^s$ many trivial heaps from such an elementary subgraph. Therefore, the following equation holds:
\begin{align*}
\widetilde{\phi}_G\ev{t} = \sum_{T\in \mathbbm{t}\ev{G}} \ev{-1}^{c\ev{T}} w\ev{T} = \sum_{X \in \mathcal{E}\ev{ G }  } t^{-|E\ev{X}|}  \ev{-1}^{c\ev{X}} 2^{z\ev{X}} 
\end{align*}
where $X = \emptyset$ is an elementary graph with zero components, with $\ev{ -1 }^{c\ev{X}} 2^{z\ev{X}} = \ev{ -1 }^{0} 2^0 = 1$. 

\subsubsection*{All Pyramids}
Application of \Cref{thm:viennot_krattenthaler} yields:
\begin{corollary}
\label{cor:all_pyramids}
\begin{align*}
-\log\ev{ \widetilde{\phi}_G\ev{t} } & = -\log\ew{ \sum_{X\in \mathbbm{t}\ev{\mathcal{B} }}   \ev{-1}^{c\ev{X}}  w\ev{X}}  = \sum_{P\in \mathbbm{p}\ev{G }} \frac{1}{|P|} w\ev{P}  = \sum_{P\in \mathbbm{p}\ev{G }} \frac{1}{|P|} t^{-|E\ev{P}|} & 
\end{align*}
\end{corollary}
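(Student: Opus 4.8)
The plan is to read off each of the three equalities from results already established. The first equality is immediate from the identity proved in the preceding discussion of trivial heaps, $\widetilde{\phi}_G(t) = \sum_{X\in\mathbbm{t}(G)}(-1)^{c(X)}w(X)$: applying $-\log(\cdot)$ to both sides gives exactly the leftmost equality. It is worth recording here that $\widetilde{\phi}_G(t) = t^{-n}\phi_G(t)$ has constant term $1$ as a polynomial in $t^{-1}$ (since $\phi_G$ is monic of degree $n = |V(G)|$), so $\log\widetilde{\phi}_G(t)$ is a well-defined element of $\Q[[t^{-1}]]$ and the manipulation is legitimate.

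For the second equality I would invoke part 3 of \Cref{thm:viennot_krattenthaler} directly, with set of pieces $\mathcal{B} = \widetilde{\mathcal{B}}(G)$, concurrence relation given by sharing a vertex, and weight function $\beta\mapsto w(\beta) = t^{-|E(\beta)|}$. The hypotheses are satisfied because the weight of a heap is defined to be the product of the weights of its pieces, hence multiplicative. The theorem gives $\log\bigl(\sum_{T\in\mathbbm{t}(\widetilde{\mathcal{B}}(G))}(-1)^{|T|}w(T)\bigr) = -\sum_{P\in\mathbbm{p}(\widetilde{\mathcal{B}}(G))}\tfrac{1}{|P|}w(P)$; negating both sides and using that a trivial heap is an antichain, so that its number of components equals its number of pieces $|T|$, produces the middle expression of the Corollary (with $\mathbbm{p}(G)$ read as $\mathbbm{p}(\widetilde{\mathcal{B}}(G))$).

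The third equality is obtained by unwinding the weight on a pyramid: by multiplicativity, $w(P) = \prod_{\beta\in P} t^{-|E(\beta)|} = t^{-\sum_{\beta\in P}|E(\beta)|} = t^{-|E(P)|}$, where $|E(P)|$ counts the edges of $P$ with multiplicity across its pieces. Substituting this into the sum completes the chain of equalities.

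I expect the only point needing genuine care to be the well-definedness of the right-hand series. I would observe that every piece of $\widetilde{\mathcal{B}}(G)$ has at least two edges (edgegons have exactly $2$, cycles at least $3$) and that $\widetilde{\mathcal{B}}(G)$ is finite, so for each $d$ there are only finitely many pyramids $P$ with $|E(P)| = d$; hence $\sum_P \tfrac{1}{|P|}t^{-|E(P)|}$ is a well-defined element of $\Q[[t^{-1}]]$, and all three equalities hold as formal identities there. Everything else is routine bookkeeping.
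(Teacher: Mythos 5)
Your proposal is correct and follows exactly the route the paper intends: the corollary is stated as a direct application of part 3 of \Cref{thm:viennot_krattenthaler} to the trivial-heap expansion of $\widetilde{\phi}_G(t)$ established just before it, with $(-1)^{c(T)}=(-1)^{|T|}$ for trivial heaps and $w(P)=t^{-|E(P)|}$ by multiplicativity. Your added remarks on the constant term of $\widetilde{\phi}_G$ and on the finiteness of pyramids of each codegree are sound and only make explicit what the paper leaves implicit.
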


On the other hand, we have the following identity, for the logarithm of the characteristic polynomial:
\begin{theorem}
\label{thm:log_and_all_walks}
$$ -\log\ev{ \widetilde{\phi}_G\ev{t} } = \sum_{ d\geq 1 } \dfrac{1}{d} w_d t^{-d} = \sum_{\mathbf{w}\in \mathcal{U}\ev{G}} \frac{1}{|\mathbf{w}|} t^{-|\mathbf{w}|} $$
\end{theorem}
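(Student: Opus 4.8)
The plan is to prove both equalities by working in the ring $\Q[[t^{-1}]]$ of formal power series in $t^{-1}$: the first equality is the classical Newton--Girard logarithmic expansion of the characteristic polynomial, and the second is merely a regrouping of the sum over closed walks according to their length.

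First I would put $\widetilde{\phi}_G\ev{t}$ into a shape suited to taking logarithms. Since $G$ is simple of order $n$ we have $\deg\ev{ \phi_G\ev{t} } = n$ and $\phi_G\ev{t} = \det\ev{ t I_n - \mathbb{A}_G }$, whence
$$\widetilde{\phi}_G\ev{t} = t^{-n}\,\det\ev{ t I_n - \mathbb{A}_G } = \det\ev{ I_n - t^{-1}\mathbb{A}_G }.$$
As $\mathbb{A}_G$ is a real symmetric matrix with (real) eigenvalues $\lambda_1,\dots,\lambda_n$, this factors as $\widetilde{\phi}_G\ev{t} = \prod_{i=1}^{n}\ev{ 1 - \lambda_i t^{-1} }$, a polynomial in $t^{-1}$ with constant term $1$; hence $\log\widetilde{\phi}_G\ev{t}$ is a well-defined element of $\Q[[t^{-1}]]$.

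Next I would expand each logarithm using $\log\ev{ 1 - x } = -\sum_{d\geq 1} x^{d}/d$ and interchange the \emph{finite} sum over $i$ with the sum over $d$, obtaining
$$-\log\widetilde{\phi}_G\ev{t} = -\sum_{i=1}^{n}\log\ev{ 1 - \lambda_i t^{-1} } = \sum_{d\geq 1}\frac{1}{d}\ew{ \sum_{i=1}^{n}\lambda_i^{d} }t^{-d} = \sum_{d\geq 1}\frac{1}{d}\operatorname{tr}\ev{ \mathbb{A}_G^{d} }\,t^{-d},$$
since the $d$-th power sum of the eigenvalues of $\mathbb{A}_G$ equals $\operatorname{tr}\ev{ \mathbb{A}_G^{d} }$. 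Now $\ev{ \mathbb{A}_G^{d} }_{v,v}$ is exactly the number of closed walks of length $d$ based at $v$, so $\operatorname{tr}\ev{ \mathbb{A}_G^{d} } = \sum_{v\in V\ev{G}}\lav \mathcal{U}_d^{v}\ev{G} \rav = \lav \mathcal{U}_d\ev{G} \rav = w_d$; substituting gives the first equality $-\log\widetilde{\phi}_G\ev{t} = \sum_{d\geq 1}\tfrac{1}{d}\,w_d\,t^{-d}$.

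For the second equality I would simply partition $\mathcal{U}\ev{G} = \bigsqcup_{d\geq 0}\mathcal{U}_d\ev{G}$; the terms with $|\mathbf{w}| = 0$ are absent from $\sum_{\mathbf{w}\in\mathcal{U}\ev{G}}\tfrac{1}{|\mathbf{w}|}t^{-|\mathbf{w}|}$, and each of the $w_d = \lav\mathcal{U}_d\ev{G}\rav$ walks of length $d\geq 1$ contributes $\tfrac{1}{d}t^{-d}$, so
$$\sum_{\mathbf{w}\in\mathcal{U}\ev{G}}\frac{1}{|\mathbf{w}|}\,t^{-|\mathbf{w}|} = \sum_{d\geq 1}\ \sum_{\mathbf{w}\in\mathcal{U}_d\ev{G}}\frac{1}{d}\,t^{-d} = \sum_{d\geq 1}\frac{w_d}{d}\,t^{-d},$$
which matches the middle expression. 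None of these steps is delicate; the only things to verify are that $\widetilde{\phi}_G$ has constant term $1$ as a series in $t^{-1}$ (so the logarithm exists) and the walk-counting identity $\operatorname{tr}\ev{ \mathbb{A}_G^{d} } = w_d$, which is the single genuinely combinatorial ingredient, so I do not expect a real obstacle here: the statement is precisely the combination of the logarithmic expansion of $\det\ev{ I_n - t^{-1}\mathbb{A}_G }$ with the interpretation of powers of $\mathbb{A}_G$ as walk counts. (Combined with \Cref{cor:all_pyramids}, this also yields the identity $\sum_{P\in\mathbbm{p}\ev{G}}\tfrac{1}{|P|}t^{-|E\ev{P}|} = \sum_{\mathbf{w}\in\mathcal{U}\ev{G}}\tfrac{1}{|\mathbf{w}|}t^{-|\mathbf{w}|}$, though that is not needed for the present proof.)
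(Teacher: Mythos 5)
Your proof is correct, but it takes a genuinely different route from the paper's. You argue spectrally: $\widetilde{\phi}_G\ev{t}=\det\ev{I_n-t^{-1}\mathbb{A}_G}=\prod_{i}\ev{1-\lambda_i t^{-1}}$, expand the logarithm formally in $t^{-1}$, and use the classical identity $\operatorname{tr}\ev{\mathbb{A}_G^{d}}=w_d$; the second equality is then just a regrouping by length (and you rightly note that length-$0$ walks must be read as excluded from the right-hand sum). The paper instead stays inside the infragraph/Eulerian-trail machinery it is building for hypergraphs: it quotes \cite[Lemma 13]{cc1} and \cite[Lemma 5]{cc2} to write $-\log\ev{\widetilde{\phi}_G\ev{t}}$ as $\sum_{X\in\textup{Inf}^{1}\ev{G}}t^{-|E\ev{X}|}\,|\mathfrak{C}\ev{X}|/M_X$, converts Eulerian circuits to Eulerian trails at the cost of a factor $|E\ev{X}|$, and then invokes the correspondence (from the proof of \Cref{cor:bij_walks_pyramids}) between closed walks of $G$ and equivalence classes, of size $M_X$, of Eulerian trails of connected infragraphs of $G$. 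Your argument is more elementary and self-contained for rank $2$, resting only on Newton--Girard expansion and the walk interpretation of $\operatorname{tr}\ev{\mathbb{A}_G^{d}}$; the paper's argument is heavier but is deliberately the rank-$2$ instance of the hypergraph computation, where no determinant factorization or simple trace formula is available (the characteristic polynomial being a resultant), which is the expository point of that section. One caveat if your proof were to replace the paper's: the surrounding results (e.g.\ \Cref{cor:interesting_identity}) lean on the walk--trail--infragraph dictionary that the paper's proof exercises, so that correspondence would still need to be established elsewhere.
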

\begin{proof}
Recall that $M_X$ is the product of factorials of multiplicities of the edges of $X$ (see Section \ref{sec:preliminaries_for_graphs}). 
\begin{align*}
& -\log\ev{ \widetilde{\phi}_G\ev{t} }  =  \sum_{X\in \textup{Inf}^{1}\ev{ \mathcal{H} }}  t^{-|E\ev{X}|}  \dfrac{ |\mathfrak{C}\ev{ X }| }{ M_X } \\ 
& \hspace{3cm}\text{ by \cite[Lemma 13]{cc1} and \cite[Lemma 5]{cc2}}  \\
& = \sum_{X\in \textup{Inf}^{1}\ev{ G }}  t^{-|E\ev{X}|} \dfrac{1}{|E\ev{X}|} \dfrac{ |\mathcal{W}\ev{ X }| }{ M_X }  \\
& \hspace{3cm} \text{since each Eulerian circuit is represented by $|E\ev{X}|$ Eulerian trails}\\ 
& = \sum_{d\geq 0} t^{-d} \dfrac{1}{d} \sum_{X\in \textup{Inf}^{1}_{d}\ev{ G }}   \dfrac{ |\mathcal{W}\ev{ X }| }{ M_X } = \sum_{ \mathbf{w} \in \mathcal{U}\ev{G}} \frac{1}{|\mathbf{w}|} t^{-|\mathbf{w}|} 
\end{align*}
where the last equality follows from the bijective correspondence $\mathcal{U}\ev{G} \leftrightarrow \bigsqcup_{X\in \textup{Inf}^{1}\ev{G}} \mathcal{W}\ev{X} / \unsim_{\mathcal{T}\ev{X}}$, shown in the proof of \Cref{cor:bij_walks_pyramids}, where each equivalence class $[w]_{\unsim_{\mathcal{T}\ev{X}}} \in \bigsqcup_{\textup{Inf}^{1}\ev{G}} \mathcal{W}\ev{X} / \unsim_{\mathcal{T}\ev{X}}$ has size $M_X$. 
\end{proof}

\subsubsection*{All Heaps}
\begin{align*}
\frac{1}{\widetilde{\phi}_G\ev{t}} =\frac{1}{\sum_{T\in \mathbbm{t}\ev{G }}   \ev{-1}^{c\ev{T}} w\ev{T}} =\sum_{H\in \mathbbm{h}\ev{G }} w\ev{H} =\sum_{H\in \mathbbm{h}\ev{G }} t^{-|E\ev{H}|}  
\end{align*}

\subsection*{Pyramids with maximal piece containing a vertex $u$}
A classic theorem in algebraic graph theory measures the number of closed walks of a simple graph $G$ at a vertex $u$ via the characteristic polynomial. In particular, it is known that (\cite[p.~36]{viennot_french_paper} \cite{viennot_talk, abdesselam_brydges}) the Heaps of Pieces framework yields a proof of this theorem, stated below. (In the next section, we use the same approach with hypergraphs.) 
\begin{theorem}
\label{thm:walks_at_u_and_quotient}
Let $G$ be a simple graph of rank $k=2$. Let $u\in V\ev{G}$ be a vertex. Then,
$$\dfrac{\widetilde{\phi}_{G-u}\ev{t}}{\widetilde{\phi}_G\ev{t}} = \sum_{ d\geq 0  } w^{u}_d t^{-d}$$
In other words, the number of closed walks of $G$ at $u$ of length $d$ is equal to the coefficient of $t^{-d}$ in the fraction $\dfrac{\widetilde{\phi}_{G-u}\ev{t}}{\widetilde{\phi}_G\ev{t}}$. 
\end{theorem}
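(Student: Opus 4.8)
The plan is to apply part 1 of \Cref{thm:viennot_krattenthaler} to the set of pieces $\mathcal{B} = \widetilde{\mathcal{B}}\ev{G}$ (cycles of length $\geq 3$ and edgegons of $G$), with concurrence relation ``sharing a vertex'', the weight function $w\ev{\beta} = t^{-|E\ev{\beta}|}$ fixed above, and the distinguished subset $\mathcal{B}_0 = \mathcal{B}^u := \{\, \beta \in \widetilde{\mathcal{B}}\ev{G} : u \in V\ev{\beta}\,\}$. This yields
$$
\sum_{\substack{H\in\mathbbm{h}\ev{\widetilde{\mathcal{B}}\ev{G}}\\ \mathcal{M}\ev{H}\subseteq\mathcal{B}^u}} w\ev{H}
= \frac{\sum_{T\in\mathbbm{t}\ev{\widetilde{\mathcal{B}}\ev{G}\setminus\mathcal{B}^u}}\ev{-1}^{|T|}w\ev{T}}{\sum_{T\in\mathbbm{t}\ev{\widetilde{\mathcal{B}}\ev{G}}}\ev{-1}^{|T|}w\ev{T}},
$$
and the proof reduces to identifying the three sums.

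First I would handle the right-hand side. In a trivial heap the pieces are pairwise non-concurrent, hence pairwise vertex-disjoint, so they are exactly its connected components and $|T| = c\ev{T}$; thus the denominator equals $\sum_{T\in\mathbbm{t}\ev{G}}\ev{-1}^{c\ev{T}}w\ev{T} = \widetilde{\phi}_G\ev{t}$ by the Trivial Heaps identity established just above. For the numerator, a piece of $\widetilde{\mathcal{B}}\ev{G}$ missing $u$ is precisely a cycle of length $\geq 3$ or an edgegon of the vertex-deleted subgraph $G-u$, so $\widetilde{\mathcal{B}}\ev{G}\setminus\mathcal{B}^u = \widetilde{\mathcal{B}}\ev{G-u}$ and $w$ restricts correctly; since the Trivial Heaps identity is nothing but the elementary-subgraph expansion of \Cref{harary_sachs_ordinary}, which carries no connectivity hypothesis, the numerator equals $\widetilde{\phi}_{G-u}\ev{t}$ even when $G-u$ is disconnected.

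Next I would identify the left-hand side with the walk generating function. Two distinct pieces through $u$ are concurrent, hence comparable in every heap, so a heap all of whose maximal pieces lie in $\mathcal{B}^u$ has a unique maximal piece; therefore $\{\,H : \mathcal{M}\ev{H}\subseteq\mathcal{B}^u\,\} = \mathbbm{p}^u\ev{\widetilde{\mathcal{B}}\ev{G}}$, the pyramids with maximal piece through $u$. By part 2 of \Cref{cor:bij_walks_pyramids}, the map $q$ restricts to a bijection $\mathcal{U}^u\ev{G}\to\mathbbm{p}^u\ev{\widetilde{\mathcal{B}}\ev{G}}$, and since $q\ev{\mathbf{w}} = \beta_1\circ\cdots\circ\beta_m$ is assembled from a decomposition of the $|\mathbf{w}|$ steps of $\mathbf{w}$ among its pieces (an edgegon accounting for two steps, like a digon), one has $w\ev{q\ev{\mathbf{w}}} = \prod_i t^{-|E\ev{\beta_i}|} = t^{-|\mathbf{w}|}$. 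Hence the left-hand side equals $\sum_{\mathbf{w}\in\mathcal{U}^u\ev{G}} t^{-|\mathbf{w}|} = \sum_{d\geq 0} w_d^u\,t^{-d}$. Substituting the three evaluations into the displayed identity gives $\widetilde{\phi}_{G-u}\ev{t}/\widetilde{\phi}_G\ev{t} = \sum_{d\geq 0} w_d^u\,t^{-d}$, as claimed.

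I expect the only real friction to be the weight-normalization bookkeeping: confirming that the convention under which an edge component of an elementary subgraph contributes $t^{-2}$ (so that the Trivial Heaps identity is literally the Harary-Sachs expansion of $\widetilde{\phi}$) is the same convention under which a back-and-forth step of a closed walk contributes $t^{-2}$ to $w\ev{q\ev{\mathbf{w}}}$. Both facts are already recorded in the two preceding subsections, so once they are cited the argument is short. A secondary check is $\widetilde{\mathcal{B}}\ev{G}\setminus\mathcal{B}^u = \widetilde{\mathcal{B}}\ev{G-u}$ together with the remark that $q\big\vert_{\mathcal{U}^u\ev{G}}$ and all heaps with $\mathcal{M}\ev{H}\subseteq\mathcal{B}^u$ live in the connected component of $u$, so no generality is lost when $G$ (hence $G-u$) is disconnected.
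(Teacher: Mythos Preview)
Your proposal is correct and follows essentially the same route as the paper's proof: apply \Cref{thm:viennot_krattenthaler} with $\mathcal{B}_0=\widetilde{\mathcal{B}}^u$, identify numerator and denominator with $\widetilde{\phi}_{G-u}$ and $\widetilde{\phi}_G$ via the trivial-heap expansion, reduce the heap sum to a pyramid sum by the observation that any two pieces through $u$ are concurrent, and then invoke \Cref{cor:bij_walks_pyramids} Part~2. The paper's argument is more terse but structurally identical; your additional remarks on weight bookkeeping and disconnected $G-u$ are sound but not needed beyond what the paper already records.
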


\begin{proof}
Let $\widetilde{\mathcal{B}}^u\ev{ G }$, or shortly $\widetilde{\mathcal{B}}^u$ be the union of cycles and edgegons of $G$, containing $u$. By \Cref{thm:viennot_krattenthaler}, we have:
\begin{align*}
& \dfrac{\widetilde{\phi}_{G-u}\ev{t}}{\widetilde{\phi}_G\ev{t}} = \dfrac{ \sum_{ T \in \mathbbm{t}\ev{ \widetilde{\mathcal{B}}  \setminus \widetilde{\mathcal{B}}^u  }}  \ev{-1}^{c\ev{T}} w\ev{T} }{ \sum_{T\in \mathbbm{t}\ev{ \widetilde{\mathcal{B}} }}  \ev{-1}^{c\ev{T}} w\ev{T} } = \sum_{ \substack{ H \in \mathbbm{h}\ev{ \widetilde{\mathcal{B}}  }  \\ \mathcal{M}\ev{ H }\subseteq \widetilde{\mathcal{B}}^u  } } w\ev{H} = \sum_{ \substack{ P \in \mathbbm{p}\ev{ \widetilde{\mathcal{B}}  }  \\ \mathcal{M}\ev{ P }\subseteq \widetilde{\mathcal{B}}^u  } } w\ev{P} \\
&  \hspace{3cm} \text{ since every pair of elements labeled by pieces from $\widetilde{\mathcal{B}}^u$ are concurrent} \\
& = \sum_{ \mathbf{w} \in \mathcal{U}^{u}\ev{G}   } t^{- | \mathbf{w} | }  = \sum_{ d\geq 0  } w^{u}_d t^{-d} & \hspace{-8cm} \text{ by \Cref{cor:bij_walks_pyramids} Part 2}  \hspace{8cm}
\end{align*}
\end{proof}

\subsection*{Pyramids with maximal piece containing an edge $e$}
Similarly, for a simple graph $G$ and an edge $e\in E\ev{G}$, the number of walks ending at $e$ can be measured via the characteristic polynomial. Given a simple graph $\mathcal{H} = \ev{ V\ev{\mathcal{H}}, E\ev{\mathcal{H}} }$, then a simple graph $G$ is a \textit{spanning subgraph} of $\mathcal{H}$ provided $V\ev{G} = V\ev{\mathcal{H}}$ and $ E\ev{ G } \subseteq E\ev{ \mathcal{H} }$ (\cite[p.~46]{bondy}) In particular, we write $\mathcal{H}-e$ for the subgraph obtained deleting $e$, without removing any vertex. 

Let $w_{d}^{e}$ be the number of closed walks of a simple graph, of length $d\geq 0$, ending at $e$. 

\begin{theorem}
\label{thm:walks_at_e_and_quotient}
Let $G$ be a simple graph of rank $k=2$. Let $e\in E\ev{G}$ be an edge. Let $G - e$ be the spanning subgraph of $G$, obtained by removing $e$. Then, 
$$\dfrac{\widetilde{\phi}_{G-e}\ev{t}}{\widetilde{\phi}_G\ev{t}} = \sum_{ d\geq 0  } w^{e}_d t^{-d}$$
In other words, the number of closed walks of $G$ ending at $e$ of length $d$ is equal to the coefficient of $t^{-d}$ in the fraction $\dfrac{\widetilde{\phi}_{G-e}\ev{t}}{\widetilde{\phi}_G\ev{t}}$. 
\end{theorem}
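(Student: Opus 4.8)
The plan is to transcribe the proof of \Cref{thm:walks_at_u_and_quotient} almost verbatim, replacing the vertex $u$ by the edge $e$ and the set $\widetilde{\mathcal{B}}^u$ by the set $\widetilde{\mathcal{B}}^e\ev{G}$ (shortly $\widetilde{\mathcal{B}}^e$) of cycles and edgegons of $G$ containing $e$. The three inputs are the trivial-heaps formula for the normalized characteristic polynomial, \Cref{thm:viennot_krattenthaler}(1), and the walk/pyramid bijection \Cref{cor:bij_walks_pyramids}(1).

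First I would record the two facts about $G-e$ that make the substitution legitimate. Since $G-e$ is a \emph{spanning} subgraph, $|V\ev{G-e}| = |V\ev{G}|$, so $\deg\ev{\phi_{G-e}\ev{t}} = \deg\ev{\phi_G\ev{t}}$ and $\widetilde{\phi}_{G-e}$ is normalized by the same power of $t$ as $\widetilde{\phi}_G$ --- in contrast to the vertex case, where deleting $u$ drops the degree by one. Second, a cycle of length $\geq 3$ or an edgegon of $G$ survives in $G-e$ precisely when it avoids $e$, so $\widetilde{\mathcal{B}}\ev{G-e} = \widetilde{\mathcal{B}}\ev{G}\setminus\widetilde{\mathcal{B}}^e$. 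Running the trivial-heaps computation already carried out for $\widetilde{\phi}_G$ (via \Cref{harary_sachs_ordinary}, with the weight $w\ev{\beta} = t^{-|E\ev{\beta}|}$) on the graph $G-e$ then gives
\[
\sum_{T\in\mathbbm{t}\ev{\widetilde{\mathcal{B}}\ev{G}\setminus\widetilde{\mathcal{B}}^e}}\ev{-1}^{c\ev{T}}w\ev{T} \;=\; \widetilde{\phi}_{G-e}\ev{t}.
\]

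Next I would apply \Cref{thm:viennot_krattenthaler}(1) with $\mathcal{B} = \widetilde{\mathcal{B}}\ev{G}$ and $\mathcal{B}_0 = \widetilde{\mathcal{B}}^e$, together with the trivial-heaps identity for $\widetilde{\phi}_G$ itself, to get
\[
\frac{\widetilde{\phi}_{G-e}\ev{t}}{\widetilde{\phi}_G\ev{t}} \;=\; \sum_{\substack{H\in\mathbbm{h}\ev{\widetilde{\mathcal{B}}\ev{G}}\\ \mathcal{M}\ev{H}\subseteq\widetilde{\mathcal{B}}^e}} w\ev{H}.
\]
Here is the only genuinely edge-specific step: any two pieces of $\widetilde{\mathcal{B}}^e$ both contain $e$, hence both endpoints of $e$, so they are concurrent; therefore a heap with all maximal pieces in $\widetilde{\mathcal{B}}^e$ has exactly one maximal piece, i.e.\ it is a pyramid with maximal piece containing $e$. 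Hence the right-hand side equals $\sum_{P\in\mathbbm{p}^e\ev{\widetilde{\mathcal{B}}\ev{G}}}w\ev{P}$. Finally, \Cref{cor:bij_walks_pyramids}(1) gives a bijection $q\big\vert_{\mathcal{U}^e\ev{G}}: \mathcal{U}^e\ev{G}\to\mathbbm{p}^e\ev{\widetilde{\mathcal{B}}\ev{G}}$ under which $w\ev{P} = t^{-|E\ev{P}|} = t^{-|\mathbf{w}|}$ for the corresponding walk $\mathbf{w}$, and collecting walks by length yields $\sum_{d\geq0}w_d^e\,t^{-d}$, as claimed.

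I do not expect a real obstacle: the argument is a line-by-line copy of the vertex case. The only point demanding a little care is the bookkeeping in the second paragraph --- namely that edge-deletion preserves the order of the graph (so the degree normalization is unchanged) and removes from the piece set exactly the cycles and edgegons through $e$ --- together with checking that the trivial-heaps computation for $\widetilde{\phi}_G$ was stated in enough generality to apply unchanged to the subgraph $G-e$. Everything else is supplied by \Cref{thm:viennot_krattenthaler} and \Cref{cor:bij_walks_pyramids}.
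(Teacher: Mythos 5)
Your proposal is correct and follows essentially the same route as the paper's own proof: apply the trivial-heaps expression for $\widetilde{\phi}_{G-e}$ and $\widetilde{\phi}_G$, invoke \Cref{thm:viennot_krattenthaler}(1) with $\mathcal{B}_0 = \widetilde{\mathcal{B}}^e$, note that pieces containing $e$ are pairwise concurrent so the heaps are pyramids, and finish with \Cref{cor:bij_walks_pyramids}(1). The extra bookkeeping you spell out (equal normalization degrees for the spanning subgraph, and $\widetilde{\mathcal{B}}\ev{G-e} = \widetilde{\mathcal{B}}\ev{G}\setminus\widetilde{\mathcal{B}}^e$) is left implicit in the paper but is exactly right.
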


\begin{proof}
Let $\widetilde{\mathcal{B}}^e\ev{ G }$, or shortly $\widetilde{\mathcal{B}}^e$ be the union of cycles and edgegons of $G$, containing the edge $e$. (There is a single edgegon.) By \Cref{thm:viennot_krattenthaler}, we get:
\begin{align*}
& \dfrac{\widetilde{\phi}_{G-e}\ev{t}}{\widetilde{\phi}_G\ev{t}} = \dfrac{ \sum_{ T \in \mathbbm{t}\ev{ \widetilde{\mathcal{B}}  \setminus \widetilde{\mathcal{B}}^e  }}  \ev{-1}^{c\ev{T}} w\ev{T} }{ \sum_{T\in \mathbbm{t}\ev{ \widetilde{\mathcal{B}} }}  \ev{-1}^{c\ev{T}} w\ev{T} } = \sum_{ \substack{ H \in \mathbbm{h}\ev{ \widetilde{\mathcal{B}}  }  \\ \mathcal{M}\ev{ H }\subseteq \widetilde{\mathcal{B}}^e  } } w\ev{H} = \sum_{ \substack{ P \in \mathbbm{p}\ev{ \widetilde{\mathcal{B}}  }  \\ \mathcal{M}\ev{ P }\subseteq \widetilde{\mathcal{B}}^e  } } w\ev{P} \\
& \hspace{3cm}\text{ since every pair of elements labeled by pieces from $\widetilde{\mathcal{B}}^e$ are concurrent} \\
& = \sum_{ \mathbf{w} \in \mathcal{U}^{e}\ev{G}} t^{- | \mathbf{w} | } = \sum_{ d\geq 0  } w^{e}_d t^{-d}  && \hspace{-8cm}\text{ by \Cref{cor:bij_walks_pyramids} Part 1 }  \hspace{8cm}
\end{align*}
\end{proof}

By an abuse of notation, let us write $\mathcal{M}\ev{P}$ for the unique piece of a pyramid $P \in \mathbbm{p}\ev{G }$. By combining \Cref{thm:log_and_all_walks} and \Cref{cor:all_pyramids}, we obtain an interesting identity: 
\begin{corollary}
\label{cor:interesting_identity}
$$
\sum_{P\in \mathbbm{p}\ev{G }} \frac{1}{|P|} t^{-|E\ev{P}|} = \sum_{ P \in \mymathbb{p} \ev{ G }  } \frac{ |V\ev{\mathcal{M}\ev{P}}| }{|E\ev{P}|} t^{-|E\ev{P}|} 
$$
where $| E\ev{P} |$ is the number of edges and $|P|$ is the number of pieces in a pyramid. In particular, for each $d\geq 0$, 
$$
\sum_{P\in \mathbbm{p}_d\ev{G }} \frac{1}{|P|} = \frac{1}{d} \sum_{ P \in \mymathbb{p}_d \ev{ G }  }  |V\ev{\mathcal{M}\ev{P}}|
$$
where $\mymathbb{p}_{d}\ev{ G } $ is the set of pyramids with $d\geq 0$ edges. 
\end{corollary}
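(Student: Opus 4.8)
The plan is to route both sides of the identity through the enumeration of closed walks, using the two logarithmic formulas already available. First I would observe that \Cref{cor:all_pyramids} and \Cref{thm:log_and_all_walks} express the \emph{same} Laurent series $-\log\ev{\widetilde{\phi}_G\ev{t}}$ in two ways, namely as $\sum_{P\in\mathbbm{p}\ev{G}} \frac{1}{|P|}\, t^{-|E\ev{P}|}$ and as $\sum_{\mathbf{w}\in\mathcal{U}\ev{G}} \frac{1}{|\mathbf{w}|}\, t^{-|\mathbf{w}|}$; equating them already identifies the left-hand side of the corollary with the walk series. (Both sums implicitly range over objects with at least one edge / of positive length, so no denominator vanishes; the empty heap and the length-$0$ walks contribute nothing. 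Here, as in the statement, $\mathbbm{p}\ev{G}$ abbreviates $\mathbbm{p}\ev{\widetilde{\mathcal{B}}\ev{G}}$.) It then remains only to rewrite $\sum_{\mathbf{w}\in\mathcal{U}\ev{G}} \frac{1}{|\mathbf{w}|}\, t^{-|\mathbf{w}|}$ in the form appearing on the right.

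For that, I would partition the closed walks by their base point, $\mathcal{U}\ev{G}=\bigsqcup_{u\in V\ev{G}} \mathcal{U}^{u}\ev{G}$, and invoke, for each $u$, the bijection $q\big\vert_{\mathcal{U}^{u}\ev{G}}\colon \mathcal{U}^{u}\ev{G}\to\mathbbm{p}^{u}\ev{G}$ of \Cref{cor:bij_walks_pyramids} (Part 2). The key compatibility is that $q$ preserves the relevant size: since the cycle/edgegon decomposition $\ev{\beta_1,\ldots,\beta_m}$ of $\mathbf{w}$ partitions its edge multiset, the pyramid $q\ev{\mathbf{w}}=\beta_1\circ\cdots\circ\beta_m$ has $|E\ev{q\ev{\mathbf{w}}}| = \sum_{i=1}^m |E\ev{\beta_i}| = |\mathbf{w}|$. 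Hence
\[
\sum_{\mathbf{w}\in\mathcal{U}\ev{G}} \frac{1}{|\mathbf{w}|}\, t^{-|\mathbf{w}|}
= \sum_{u\in V\ev{G}}\ \sum_{P\in\mathbbm{p}^{u}\ev{G}} \frac{1}{|E\ev{P}|}\, t^{-|E\ev{P}|}.
\]

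Finally, I would interchange the two sums on the right. Each pyramid $P\in\mathbbm{p}\ev{G}$ has a unique maximal piece $\mathcal{M}\ev{P}$, and by definition $P\in\mathbbm{p}^{u}\ev{G}$ precisely when $u\in V\ev{\mathcal{M}\ev{P}}$; so as $u$ ranges over $V\ev{G}$, each $P$ is counted exactly $|V\ev{\mathcal{M}\ev{P}}|$ times, which yields
\[
\sum_{u\in V\ev{G}}\ \sum_{P\in\mathbbm{p}^{u}\ev{G}} \frac{1}{|E\ev{P}|}\, t^{-|E\ev{P}|}
= \sum_{P\in\mathbbm{p}\ev{G}} \frac{|V\ev{\mathcal{M}\ev{P}}|}{|E\ev{P}|}\, t^{-|E\ev{P}|},
\]
the claimed identity; extracting the coefficient of $t^{-d}$ from both sides then gives the coefficient form. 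I do not expect any genuine obstacle: the argument is bookkeeping layered on top of \Cref{cor:all_pyramids}, \Cref{thm:log_and_all_walks}, and the walk/pyramid bijection. The only points that need a little care are verifying $|E\ev{q\ev{\mathbf{w}}}| = |\mathbf{w}|$ directly from the construction of $q$, and the multiplicity count over base vertices $u$; one should also note the harmless exclusion of the trivial length-$0$ walk (equivalently, the empty heap), so that all denominators are nonzero.
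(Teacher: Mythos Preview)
Your proposal is correct and follows essentially the same route as the paper: both equate the two expressions for $-\log\ev{\widetilde{\phi}_G\ev{t}}$ from \Cref{cor:all_pyramids} and \Cref{thm:log_and_all_walks}, partition closed walks by base vertex, apply the bijection of \Cref{cor:bij_walks_pyramids} (Part 2), and then swap the order of summation to collect the factor $|V\ev{\mathcal{M}\ev{P}}|$. Your explicit check that $|E\ev{q\ev{\mathbf{w}}}|=|\mathbf{w}|$ and your remark on excluding length-$0$ walks are careful additions, but do not constitute a different approach.
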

See \hyperlink{proof:interestingidentitypf}{proof} on p.~\pageref*{proof:interesting_identity_pf_page}. 

\subsection{An Example: Heaps of Pieces of a Simple Graph}

Let $G=K_4^{\ev{2}}$ be the complete graph on $4$ vertices. 

\begin{enumerate}
\item \textbf{Trivial Heaps.} Using Sagemath (\cite{sagemath}), we calculate: 
\begin{align*}
\widetilde{\phi}_G\ev{t} = 1-6t^{-1}-8t^{-2}-3t^{-4}
\end{align*}
The trivial heaps' contribution to the coefficient of $t^{-4}$ are,
$$
\sum_{T\in \mathbbm{t}\ev{G}} \ev{-1}^{c\ev{T}} w\ev{T} = \sum_{ X \in \mathcal{E}_d }  \ev{-1}^{c\ev{X}} 2^{z\ev{X}} \genfrac{\lbrack}{\rbrack}{0pt}{0}{G}{X} = \ldots + \ev{-1}^{1} \cdot 2 \cdot 3 t^{-4} + \ev{-1}^{2} 3 t^{-4} = -3 t^{-4}
$$
where the trivial heaps are formed from a cycle of length $4$ and a disjoint union of two edges, respectively. 
\item \textbf{All Pyramids.}
$$\log\ev{\widetilde{\phi}_G\ev{t}} = \log\ev{1-6t^{-1}-8t^{-2}-3t^{-4}} = -6 t^{-2}-8t^{-3}-21t^{-4}-\ldots $$
The contributions of walks and pyramids to the term $21t^{-4}$ are,
\begin{align*}
& - \sum_{\mathbf{w}\in \mathcal{U}\ev{G}} \frac{1}{|\mathbf{w}|} t^{-|\mathbf{w}|} = \ldots - t^{-4} \ew{ \frac{1}{4}  \cdot 4 \cdot 2 \cdot 3 + \frac{1}{4} \cdot 2 \cdot 1 \cdot 6 + \frac{1}{4} \cdot (1+2+1) \cdot 12 } - \ldots \\
& - \sum_{P\in \mathbbm{p}\ev{G }} \frac{1}{|P|} w\ev{P} = \ldots - t^{-4} \ew{ \dfrac{1}{1} \cdot 2 \cdot 3 + \dfrac{1}{2}\cdot 6 + \dfrac{1}{2} \cdot 2 \cdot 12 } - \ldots
\end{align*}
where pyramids are formed from a cycle of length $4$, a single edge and a path of length $2$, respectively. 
\item \textbf{All heaps.}
$$\frac{1}{\widetilde{\phi}_G\ev{t}} \underset{\text{Sage}}{=} 1+6 t^{-2}+8t^{-3}+39t^{-4}+\ldots $$
The contributions of all heaps to the term $39t^{-4}$ are,
$$
\sum_{H\in \mathbbm{h}\ev{\mathcal{B} }} t^{-|E\ev{H}|} = \ldots + t^{-4} \ev{ 6 + 6 + 2 \cdot 12 + 3 } + \ldots  = \ldots + 39t^{-4} + \ldots 
$$
where heaps are formed from a cycle of length $4$, a single edge, a path of length $2$ and a disjoint union of two edges, respectively. 
\end{enumerate}

\section{Heaps of Pieces on Hypergraphs}
\label{sec:heap_hyper}
\subsection{Preliminaries}
In \cite{cc1} and \cite{cc2}, a Harary-Sachs Theorem for hypergraphs of rank $k\geq 2$ was proven (\Cref{thm:harary_sachs_hyper} below), where the codegree $d$ coefficient of the characteristic polynomial $\phi_\mathcal{H}\ev{t}$ of $\mathcal{H}$ is calculated via infragraphs of $\mathcal{H}$ on $d$ edges and their associated coefficients (see Definition \ref{def:veblen_multi_hypergraph}). In this section, we give an alternative version of the Harary-Sachs Theorem for Hypergraphs. Hence, we include some definitions below and restate the version of the Harary-Sachs Theorem shown in \cite{cc1}. 

\subsubsection{Number of copies of an infragraph in a simple hypergraph}
Given a Veblen hypergraph $H\in \mathcal{V}^{m}$ with $m\geq 1$ components, let $H = \bigsqcup_{i=1}^{t} \bigsqcup_{j=1}^{\nu_i} G_{ij}$, where each $\{G_{i1},\ldots, G_{i \nu_i}\}$ is an isomorphism class of the components of $H$, for $i=1,\ldots,t$ and $t\leq m$. Define the number $\nu_H = \prod_{i=1}^{t} \nu_i !$. We note that the number of linear orderings of the components of $H$, up to isomorphism, is given by the number $\mu_H := \displaystyle\binom{m}{\nu_1,\dots,\nu_t} = \frac{m!}{\nu_H} $.

Now, we define $\ev{\# H\subseteq \mathcal{H}}$, which appears in the Harary-Sachs Theorem for hypergraphs (\Cref{thm:harary_sachs_hyper}), following the notation of \cite[p.~10, Lemma 12]{cc1}. The quantity $\ev{\# H\subseteq \mathcal{H}}$ measures the number of ways to ``stack'' a connected Veblen hypergraph $H$ on $\mathcal{H}$:

\begin{definition}
Let $\mathcal{H}$ be a simple hypergraph of rank $k$. Let $G\in \mathcal{V}^{1}$ be a connected Veblen $k$-graph. Then, 
$$ \ev{\# G \subseteq \mathcal{H}} := |\{ X\in \textup{Inf}\ev{\mathcal{H}}: X \cong G \}| =  \dfrac{ \lav \text{Aut}\ev{ \underline{G} } \rav }{  \lav  \text{Aut}\ev{G} \rav }  \cdot  \genfrac{\lbrack}{\rbrack}{0pt}{0}{\mathcal{H}}{\underline{G}}
$$
where $\underline{G}$ is the flattening of $G$, as in \Cref{def:flattening}. If $H = G_1 \sqcup \ldots G_m \in \mathcal{V}^{m}$ is disconnected, i.e., $m\geq 2$, then we define 
$$
\ev{ \# H \subseteq \mathcal{H} } := \dfrac{1}{\nu_H} \prod_{i=1}^{m} \ev{ \# G_i \subseteq \mathcal{H} } =  \dfrac{\mu_H}{m!} \prod_{i=1}^{m} \ev{ \# G_i \subseteq \mathcal{H} }
$$
\end{definition}
\begin{example}
Consider the complete hypergraph $\mathcal{H}$, of rank $k=3$ and order $n=4$, in Figure \ref{fig:pound_symbol_example}. The value $\ev{ \# G \subseteq \mathcal{H} }$ is calculated below, for an infragraph $G$.
\begin{figure}[ht]
~ \qquad \qquad \qquad \qquad
\begin{subfigure}[t]{0.3\textwidth}%
\centering
\includegraphics[width=1.1in]{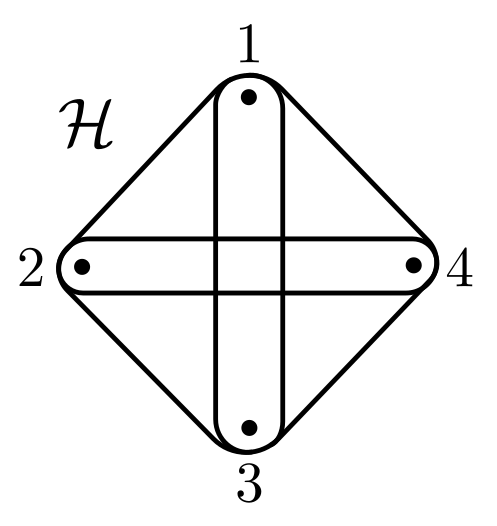} 
\end{subfigure}
~ \quad
\begin{subfigure}[t]{0.3\textwidth}%
\centering
\raisebox{4mm}{ \includegraphics[width=1.2in]{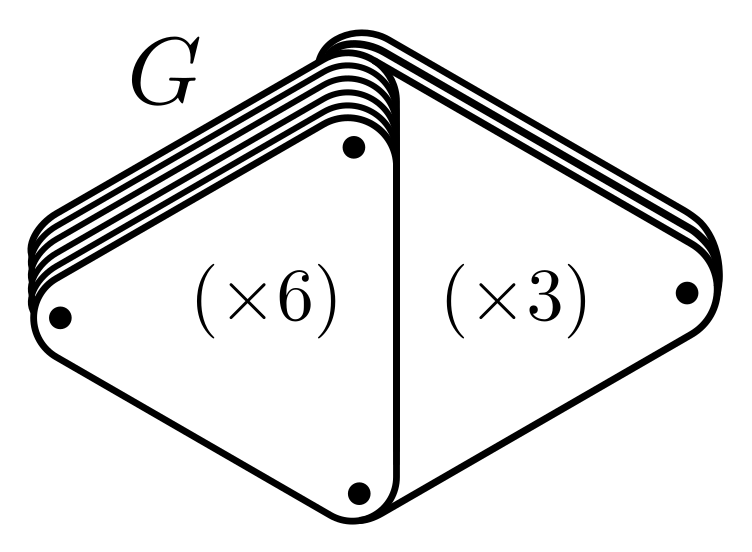}  }
\end{subfigure}
\caption{The host hypergraph $\mathcal{H}$ and $G \in \mathcal{V}^{1}_9$, a connected Veblen hypergraph on 9 edges.}
\label{fig:pound_symbol_example}
\end{figure}
$$
\ev{\# G \subseteq \mathcal{H}} =  \dfrac{ \lav \text{Aut}\ev{ \underline{G} } \rav }{  \lav  \text{Aut}\ev{G} \rav }  \cdot \genfrac{\lbrack}{\rbrack}{0pt}{0}{\mathcal{H}}{\underline{G}} = \dfrac{ 2 \cdot 2 }{ 2 } \cdot \binom{4}{2} = 12
$$
\end{example}
\subsubsection{Rooted stars and Rootings}
Let $X = \ev{ [n], E, \varphi } \in \mathcal{V}^{\infty}_d$ be a Veblen hypergraph with $d \geq 0 $ edges. Given an edge $e\in E\ev{X}$, for each $u\in e$, we define the \textit{$u$-rooted directed star of $e$}, denoted $S_u\ev{ e }$, as the digraph $T$ with $V\ev{T} = \varphi(e)$ and $E\ev{T} = \{ \ev{ u,v} : v\in \varphi(e) \text{ and } v\neq u\}$. A \textit{rooting} of $X$ is a $d$-tuple of stars $\ev{ S_{v_1}\ev{ e_1 },\ldots, S_{v_d}\ev{ e_d } }$ such that
\begin{itemize}
\item[\textit{i)}] $(\forall i \in [d-1]) (v_i\leq v_{i+1})$ 
\item[\textit{ii)}] $e_1,\ldots,e_d$ are pairwise distinct and $E = \{ e_1,\ldots,e_d\}$. 
\item[\textit{iii)}] $[n]=\{v_1,\ldots,v_d\}$
\item[\textit{iv)}] $(\forall i \in [d]) (v_i \in e_i)$
\end{itemize}

From each rooting $R = \ev{ S_{v_1}\ev{ e_1 },\ldots, S_{v_d}\ev{ e_d } }$ of Veblen hypergraph $X$, we obtain a digraph by taking the union of the edges of stars, adding multiplicities when two edges are parallel. Formally, let $D_R$ be the digraph with $V(D_R) = V(X)$, $E(D_R) = \{(v,w,j): v=v_j, w \in e_j \setminus \{v\}\}$, and $\psi(v,w,j)=(v,w)$ for each $v,w \in [n]$.  Note that $ m_{D_R}\ev{u,v} = | \{ j \in [d] : u = v_j, v\in e_j \setminus \{u\}  \}|$, for each $u,v \in [n]$, i.e., the number of stars $S_u(e_j)$ in $R$ so that $v \in e_j \setminus \{u\}$. We say that $R$ is an \textit{Eulerian rooting} if $D_R$ is Eulerian. Let $ \mathfrak{R}\ev{X} $ be the set of Eulerian rootings of $X$. For a rooting $R\in \mathfrak{R}\ev{X} $, let $i$ be chosen as the root of $r_i$-many stars,
$$ 
S_{i}\ev{e_{i1}},\ldots,S_{i}\ev{e_{ir_i}} 
$$
Then, the rooting $R$ has the form 
$$
\ev{S_{1}\ev{e_{11}},\ldots,S_{1}\ev{e_{1r_1}},\ldots, S_{n}\ev{e_{n1}},\ldots,S_{n}\ev{e_{nr_n}}} 
$$
By the definition of a rooting, it follows that for each pair of parallel edges $e,e'$ and each vertex $i\in [n]$, we have $S_{i}\ev{e} = S_{i}\ev{e'}$. For example, if $X $ is the infragraph with $V\ev{X} = [2]$ and $E\ev{X} = \{e_1,e_2,e_3,e_4\}$ with $\varphi(e_j) = \{1,2\}$ for each $j$, then $\mathfrak{R}\ev{X} = \{R\}$, where 
$$ 
R = \ev{ S_1\ev{e_1}, S_1\ev{e_2}, S_2\ev{e_3}, S_2\ev{e_4}}
$$
since $S_i\ev{e_1} = S_i\ev{e_2} = S_i\ev{e_3} = S_i\ev{e_4}$, for $i=1,2$. Two rootings are considered equivalent, if one is obtained from the other by interchanging two stars with the same root (and possibly non-parallel edges). Formally, we define an equivalence relation $\equiv$ on $\mathfrak{R}\ev{ X }$, as the transitive closure of the relations,
\begin{align*}
\ev{S_{v_1}\ev{e_1},\ldots, S_{v_i}\ev{e_i}, S_{v_{i+1}}\ev{e_{i+1}} \ldots ,S_{v_d}\ev{e_d}} & \equiv \ev{S_{v_1}\ev{e_1},\ldots, S_{v_{i+1}}\ev{e_{i+1}},  S_{v_i}\ev{e_i}, \ldots ,S_{v_d}\ev{e_d}} 
\end{align*}
where $v_i = v_{i+1}$ and $1 \leq i \leq d-1$. Let $\mathfrak{R}\ev{X} / \!\equiv$ be the set of rootings of $X$, up to the equivalence relation $\equiv$. Instead of $[R]_\equiv\in \mathfrak{R}\ev{X}/ \!\equiv$, we succinctly write $\mathbf{R}\in \overline{\mathfrak{R}}\ev{X}$. Note that $R\equiv R'$ implies $D_R \approx D_{R'}$. Hence, we the following are well-defined, independent of the choice of representative:
$$
D_\mathbf{R} := [D_R]_{\approx} \text{ and } \mathfrak{C}\ev{ D_\mathbf{R} } := \mathfrak{C}\ev{ D_R } \text{ for any $R \in \mathbf{R}$.}
$$
Let $X \in \mathcal{V}^1$ be a connected Veblen hypergraph. Let $E\ev{X} / \unsim = \{ \mathbf{e}_1, \ldots, \mathbf{e}_s \}$ be the edges of $X$, up to the equivalence relation $\sim$ of parallelism. Let $R\in \mathfrak{R}\ev{X}$ be a rooting. For each vertex $u \in V(X)$ and $j\in [s]$, let $\mathfrak{s}^{R}_u\ev{\mathbf{e}_j}$ be the number of times $u$ is chosen as the root of $\mathbf{e}_j$. We use the notation
$$ 
\Gamma^{R}_u := \prod_{j=1}^{s} \mathfrak{s}^{R}_u\ev{ \mathbf{e}_j }! 
$$

\begin{remark}
\label{rmk:previous_work}
By \cite{cc1}, the number of stars $S_u\ev{e}$ in $R$, rooted at $u$ is given by 
$$ 
\mathfrak{s}_u\ev{X} := \sum_{j=1}^{s} \mathfrak{s}^{R}_u\ev{ \mathbf{e}_j } = \dfrac{\deg^{-}_{ D_R }\ev{u}}{k-1} = \dfrac{ \deg_X\ev{u} }{ k }
$$ 
which does not depend on the rooting $R \in \mathfrak{R}\ev{X}$. Then, the size of the equivalence class $\mathbf{R} \in \overline{\mathfrak{R}}\ev{X} $ can be expressed as
$$ | \mathbf{R} |  = \prod_{u\in V\ev{X}} \dfrac{ \mathfrak{s}_u\ev{X}! }{ \Gamma^{ \mathbf{R} }_u} $$
where $ \Gamma^{\mathbf{R}}_u := \Gamma^{R}_u$ is well-defined, independent of the choice of representative $R\in \mathbf{R}$.
\end{remark}

\subsubsection{The Associated Coefficient}
\begin{definition}
Given a connected Veblen hypergraph $G$, we define the \textit{associated coefficient} of $G$ (\cite{cc1,cc2}):
$$C_G=\sum_{R\in \mathfrak{R}\ev{G}}\dfrac{\tau_{D_R}}{\prod_{v\in V\ev{D_R}} \text{deg}^{-}_{D_R} \ev{v}}$$
where $\tau_D$ is the number of arborescences of a digraph $D$ rooted at a vertex $u\in V\ev{D}$, which is independent of the choice of the root, by the B.E.S.T. Theorem (\cite[Theorem 6, p.~213]{best}). Again, by the B.E.S.T. Theorem, we have an alternative formula,
$$
C_G = \sum_{R \in \mathfrak{R}\ev{G}}\dfrac{ | \mathfrak{C}\ev{D_R} | }{\prod_{v\in V\ev{D_R}} \text{deg}^{-} \ev{v} ! } = \sum_{\mathbf{R} \in  \overline{\mathfrak{R}}\ev{G} } | \mathbf{R} | \cdot \dfrac{ | \mathfrak{C}\ev{D_\mathbf{R}} | }{\prod_{v\in V\ev{D_\mathbf{R}}} \text{deg}^{-} \ev{v} ! }
$$
If a Veblen hypergraph $H = G_1 \sqcup \ldots \sqcup G_m$ is disconnected, then the associated coefficient is defined multiplicatively as,
$$C_H = \prod_{i=1}^{m} C_{G_i}$$
\end{definition}

\subsubsection{The Harary-Sachs Theorem for Hypergraphs}
Let $\mathcal{H}$ be a simple hypergraph of order $n$ and rank $k\geq 2$. 
\begin{definition}[The Characteristic Polynomial of a Hypergraph]
\label{def:char_poly}
The (normalized) \textit{adjacency hypermatrix} $\mathbb{A}_\mathcal{H} = \ev{a_{i_1 \ldots i_k}}$ of $\mathcal{H}$ is the $n$-dimensional symmetric $k$-hypermatrix with entries
$$ 
a_{i_1 \ldots i_k} = \begin{cases}  \ffrac{ 1 }{(k-1)!} & \text{ if } \{i_1,\ldots,i_k\} \in E\ev{\mathcal{H}} \\ 0 & \text{otherwise}
\end{cases}
$$
The Lagrangian of $\mathcal{H}$ is the polynomial
$$
F_\mathcal{H}\ev{ \mathbf{x} } = \sum_{i_1 \ldots i_k = 1 }^{n} a_{i_1 \ldots i_k} x_{i_1} \cdots x_{i_k}
$$
Letting $\nabla$ denote the $n$-dimensional gradient $\C[x_1,\ldots,x_n] \rightarrow \prod_{j=1}^{n} \C[x_1,\ldots,x_n]$, we define the characteristic polynomial as the $n$-dimensional resultant: 
$$
\textup{res} \nabla \ev{ t\cdot F_\mathcal{I} - F_\mathcal{H} } 
$$
where $\mathcal{I} = \ev{ \delta_{i_1,\ldots,i_k} }$ is the $n$-dimensional identity hypermatrix, with entries $\delta_{i_1,\ldots,i_k} = \begin{cases} 1 & \text{ if } i_1 = \ldots = i_k \\ 0 & \text{ otherwise} \end{cases}$. 
\end{definition}

As shown in \cite[Proposition 1, p.~1306]{qi}, the degree of the characteristic polynomial is 
$$
N := \deg\ev{\phi_{\mathcal{H}}\ev{t}} = n\ev{k-1}^{n-1}
$$

By \cite[p.~95, Theorem 3.1]{usingalg}, we have the following property of the $n$-dimensional resultant: Given homogenous polynomials $F_1,\ldots,F_n$ of degrees $d_1,\ldots, d_n$, respectively,
$$  \text{res} \ev{ t \cdot F_1, \ldots, t\cdot F_n} =t^{\sum_{i=1}^{n} d_1 \cdots d_{i-1} \cdot d_{i+1} \cdots d_n } \cdot \text{res}\ev{F_1, \ldots, F_n} $$
Putting $d_i = k-1$ for each $i=1,\ldots,n$, we have:
$$
\phi_\mathcal{H}\ev{t} = \text{res} \nabla \ev{  t\cdot F_\mathcal{I} - F_\mathcal{H} } = \text{res} \nabla \ev{  t\cdot \ev{ F_\mathcal{I} - t^{-1}\cdot F_\mathcal{H}} } = t^{n \ev{ k-1}^{n-1} } \cdot \text{res}\nabla \ev{ F_\mathcal{I} - t^{-1}\cdot F_\mathcal{H}}
$$
In particular, we have $\widetilde{\phi}_\mathcal{H}\ev{t} = t^{-N} \cdot \phi_\mathcal{H}\ev{t} = \text{res}\nabla \ev{ F_\mathcal{I} - t^{-1}\cdot F_\mathcal{H}}$. 

We state the Harary-Sachs Theorem from \cite{cc1}:
\begin{theorem}[Harary-Sachs Theorem for Hypergraphs]
\label{thm:harary_sachs_hyper}
Let $\mathcal{H}$ be a hypergraph. The codegree $d$ coefficient of the characteristic polynomial, i.e., the coefficient of $t^{N-d}$, where $N$ is the degree of the characteristic polynomial, is calculated as
$$[t^{N-d}] \ev{\phi_{\mathcal{H}}\ev{t}}=\sum_{H\in \mathcal{V}_{d}^{\infty}}  \ev{-\ev{k-1}^{n}}^{c\ev{H}} C_H  \ev{\# H\subseteq \mathcal{H}}$$
In particular, 
$$ \widetilde{\phi}_{\mathcal{H}}\ev{t} = \sum_{H\in \mathcal{V}^{\infty}} t^{- | E\ev{H} |} \ev{-\ev{k-1}^{n}}^{c\ev{H}} C_H  \ev{\# H\subseteq \mathcal{H}} $$
\end{theorem}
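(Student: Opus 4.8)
Since this is the theorem of \cite{cc1}, I only sketch the strategy. The plan is to pass from $\phi_{\mathcal{H}}\ev{t}$ to $\widetilde{\phi}_{\mathcal{H}}\ev{t} = t^{-N}\phi_{\mathcal{H}}\ev{t} = \textup{res}\,\nabla\ev{F_\mathcal{I} - t^{-1}F_\mathcal{H}}$, to compute $-\log\widetilde{\phi}_{\mathcal{H}}\ev{t}$ as a power series in $t^{-1}$ whose coefficients are generating-function data for \emph{connected} Veblen hypergraphs, and then to recover $[t^{N-d}]\phi_{\mathcal{H}}\ev{t} = [t^{-d}]\widetilde{\phi}_{\mathcal{H}}\ev{t}$ by exponentiating. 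Writing $\phi_{\mathcal{H}}\ev{t} = \prod_{i=1}^{N}\ev{t - \lambda_i}$ (in its monic normalization) for the $N = n\ev{k-1}^{n-1}$ roots with multiplicity, one has the formal identity $-\log\widetilde{\phi}_{\mathcal{H}}\ev{t} = \sum_{d\geq 1}\tfrac{1}{d}\,\textup{Tr}_d\,t^{-d}$, where $\textup{Tr}_d := \sum_{i} \lambda_i^{\,d}$, so the whole problem reduces to a combinatorial evaluation of the $d$-th power sum $\textup{Tr}_d$ of the spectrum of $\mathbb{A}_\mathcal{H}$.

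The first and main substantive step is the trace formula of Cooper--Dutle: $\textup{Tr}_d$ equals an explicit weighted count of closed-walk structures supported on sub-multi-hypergraphs of $\mathcal{H}$ with $d$ edges, each equipped with a rooting (a choice, at each step, of an outgoing star $S_u\ev{e}$). Because of the $\tfrac{1}{(k-1)!}$ normalization of $\mathbb{A}_\mathcal{H}$, the terms that survive cancellation are precisely those whose underlying sub-hypergraph is an infragraph of $\mathcal{H}$ --- every vertex degree divisible by $k$, i.e.\ it is Veblen --- and the residual freedom in the direction of the star at each of the $n$ vertices contributes a uniform factor of $\ev{k-1}^{n}$ per connected component. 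I would then group the surviving terms by the isomorphism type of the underlying connected Veblen hypergraph $H$. The number of subgraph copies of the flattening $\underline H$ in $\mathcal{H}$ is $\genfrac{\lbrack}{\rbrack}{0pt}{0}{\mathcal{H}}{\underline H}$; combined with the automorphism ratio $|\textup{Aut}(\underline H)|/|\textup{Aut}(H)|$, the edge-multiplicity factorials, the factors $\Gamma^{\mathbf{R}}_u$ of \Cref{rmk:previous_work}, and the reduction from $\mathfrak{R}\ev{H}$ to $\overline{\mathfrak{R}}\ev{H}$, this should assemble into $\ev{\#H\subseteq\mathcal{H}}$; and the remaining sum over rootings $R$ and over Eulerian circuits of the associated digraph $D_R$, after applying the B.E.S.T.\ Theorem to replace $|\mathfrak{C}\ev{D_R}|$ with $\tau_{D_R}\prod_{v}\ev{\deg^{-}_{D_R}\ev{v}-1}!$, should assemble into the associated coefficient $C_H$. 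The target outcome is $\tfrac1d\textup{Tr}_d = \ev{k-1}^{n}\sum_{H\in\mathcal{V}^{1}_{d}} C_H\ev{\#H\subseteq\mathcal{H}}$, hence $-\log\widetilde{\phi}_{\mathcal{H}}\ev{t} = \ev{k-1}^{n}\sum_{H\in\mathcal{V}^{1}} C_H\ev{\#H\subseteq\mathcal{H}}t^{-|E\ev{H}|}$.

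Finally I would exponentiate and invoke the exponential formula. An abstract disconnected Veblen hypergraph $H = G_1\sqcup\cdots\sqcup G_m\in\mathcal{V}^{m}$ arises from $m!/\nu_H$ ordered $m$-tuples of connected pieces, which is exactly cancelled by the factor $1/\nu_H$ built into the definition of $\ev{\#H\subseteq\mathcal{H}}$ in the disconnected case; meanwhile $\prod_i\ev{k-1}^{n} = \ew{\ev{k-1}^{n}}^{c\ev{H}}$, $\prod_i C_{G_i} = C_H$, $\prod_i t^{-|E\ev{G_i}|} = t^{-|E\ev{H}|}$, and the sign $(-1)^{m}$ from expanding $\exp$ becomes $(-1)^{c\ev{H}}$. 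This gives $\widetilde{\phi}_{\mathcal{H}}\ev{t} = \sum_{H\in\mathcal{V}^{\infty}} t^{-|E\ev{H}|}\ev{-\ev{k-1}^{n}}^{c\ev{H}} C_H\ev{\#H\subseteq\mathcal{H}}$, with the empty hypergraph supplying the constant term $1$; multiplying by $t^{N}$ and reading off the coefficient of $t^{N-d}$ yields the first displayed formula. I expect the genuine obstacle to be neither of the two soft steps at the ends but the collapse in the middle: verifying that the raw Cooper--Dutle trace expansion, after the B.E.S.T.\ substitution, really does reorganize --- with all automorphism, edge-multiplicity, rooting-equivalence and orientation factors accounted for, and with the correct sign --- into $\ev{k-1}^{n}$, $C_H$ and $\ev{\#H\subseteq\mathcal{H}}$ summed over $H\in\mathcal{V}^{1}_{d}$.
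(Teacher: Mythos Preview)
The paper does not prove \Cref{thm:harary_sachs_hyper}; it is stated as a citation from \cite{cc1}. You correctly recognize this and offer a sketch of the argument in \cite{cc1}, and your outline---compute $\operatorname{Tr}_d$ via the Morozov--Shakirov/Cooper--Dutle differential-operator formula, reorganize into $d(k-1)^{n}\sum_{X\in\textup{Inf}^{1}_d(\mathcal{H})} C_X$, then exponentiate using \cite[Theorem 14]{cc1}---matches the strategy the paper itself rehearses when proving the edge-variable version in \Cref{chi_edge_var} and in \Cref{prop:chi_expression}. Your identification of the ``collapse in the middle'' as the substantive step is accurate; nothing further is required here.
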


\subsection{Kocay's Lemma and Alternative Expression of the Characteristic Polynomial}
\label{subsec:kocay_lemma}
For some Veblen hypergraphs $X$, it is possible to decompose $X$ into smaller Veblen hypergraphs. The set of decompositions of $X$ into connected Veblen hypergraphs is defined as follows: 
\begin{definition}
\label{def:parallel_equiv}
Given a Veblen multi-hypergraph $X = \ev{ V, E, \varphi} \in \mathcal{V} $, define
$$\mathcal{S}\ev{X} := \left\{ \{  A_1, \ldots, A_m \} : \emptyset \neq A_i \subseteq E \text{ and } A_i \in \mathcal{V}^{1} \text{ and } E = \bigsqcup_{i=1}^{m} A_i \text{ is a disjoint union}  \right\} $$
\end{definition}

Given the equivalence relation $\unsim$ on $E$ denoting parallelism of edges, we will define further equivalence relations on $\mathcal{P}(E)$ and $\mathcal{P}(\mathcal{P}(E))$ which, by an abuse of notation, will also be denoted by the symbol $\unsim$.

\begin{definition}

\phantom{a}

\begin{enumerate}
\item[\textit{i)}] Let $\text{Sym}_{\unsim} \ev{ E } $ be the subgroup of $\text{Sym}\ev{E}$ defined by
$$ \text{Sym}_{\unsim} \ev{ E }  := \{ \sigma\in \text{Sym}\ev{E}: \sigma\ev{ e } \unsim e \text{ for each } e\in E \}$$
\item[\textit{ii)}] Denote by $\unsim$ an equivalence relation on the power set $\mathcal{P}\ev{E}$, as follows: Given $A,B \subseteq E$, 
$$ 
A \ \unsim \ B \text{ if and only if } \exists \sigma \in \text{Sym}_{\unsim} \ev{ E } \text{ such that } \sigma\ev{A} = B
$$
In particular, for each $S = \{ A_1,\ldots, A_m \} \in \mathcal{S}\ev{X}$, we have $S\subseteq \mathcal{P}\ev{E}$ and so, the following cardinality is well-defined:
$$ 
\alpha_{ S } := \prod_{j=1}^{r} | [B_j]_{\unsim} | 
$$
where $S/\unsim = \{ [B_{1}]_{\unsim}, \ldots,  [B_{r}]_{\unsim} \}$ and $ \{ B_1 ,\ldots, B_r\} $ is any choice of representatives of $S/\unsim$ and $1\leq r\leq m$. 
\item[\textit{iii)}] Define an equivalence relation $\unsim$ on $\mathcal{S}\ev{X} \subseteq \mathcal{P}(\mathcal{P}(E))$, as follows: Given $S, T \in \mathcal{S}\ev{X}$, 
$$ 
S \unsim T \text{ if and only if } \exists \sigma \in \text{Sym}_{\unsim} \ev{ E } \text{ so that } \sigma(S) = T
$$
\end{enumerate}
\end{definition}
We use the notation $\mathbf{S}\in \widetilde{\mathcal{S}}\ev{X}$ instead of $[S]_{\unsim} \in \mathcal{S}\ev{X} / \unsim$.
\begin{remark}
The following numbers are well-defined, independent of the choice of representative $S \in \mathbf{S}$ for each equivalence class $\mathbf{S}\in \widetilde{\mathcal{S}}\ev{X}$:
\begin{enumerate}
\item $\alpha_{ \mathbf{S} } := \alpha_{ S }$ for any $S\in \mathbf{S}$.
\item $\nu_{ \mathbf{S} } := \nu_{ S }$ for any $S\in \mathbf{S}$.
\end{enumerate}

\end{remark}

\begin{example}
Consider the simple ``host" hypergraph and its infragraph in Figure \ref{fig:basic_infra_host}. 
\begin{figure}[h!]
~ \qquad \qquad \qquad  \qquad
\begin{subfigure}[t]{0.3\textwidth}%
\centering
\includegraphics[width=1in]{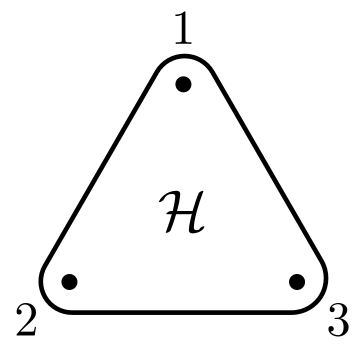} 
\end{subfigure}
~ \quad
\begin{subfigure}[t]{0.3\textwidth}%
\centering
\includegraphics[width=1.1in]{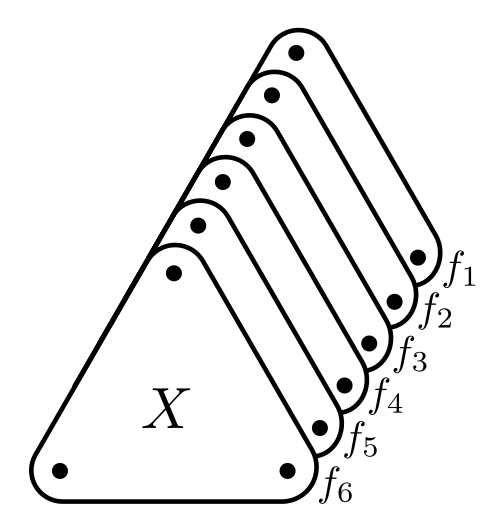} 
\end{subfigure}
\caption{The simple host hypergraph $\mathcal{H}$ and its infragraph $X \in \text{Inf}\ev{\mathcal{H}}$ }
\label{fig:basic_infra_host}
\end{figure}

We have $E\ev{X} = \{f_i\}_{i=1}^{6}$ and $\varphi\ev{f_i} = \{1,2,3\}$, for $i=1,\ldots,6$. Then, there are $ S, T \in \mathcal{S}\ev{X} $ such that $S\unsim T$, where:
$$
S = \{ \{ f_1, f_2, f_3\} , \{ f_4, f_5, f_6\} \} \text{ and } T = \{ \{ f_1, f_2, f_4\} , \{ f_3, f_5, f_6\} \} 
$$
We can also see that $ | \textbf{S} | = \binom{6}{3}$, $\alpha_{\textbf{S}} = 2!$ and $ \widetilde{\mathcal{S}}\ev{X}  = \{ [ \{ \{ f_1, f_2, f_3\} , \{ f_4, f_5, f_6\} \} ]_{\unsim}, [ \{ \{ f_1, f_2, f_3 , f_4, f_5, f_6\} \} ]_{\unsim} \} $.
\end{example}

The definition below is a reformulation of \cite[Definition 2.10]{thatte}, in which tuples are used instead of sets: Let $1\leq r\leq m$ be fixed. Let $X\in \mathcal{V}_d^{r}$ and $P = P_1\sqcup \ldots \sqcup P_m \in \mathcal{V}_d^{m}$ be given, where $P_i$ is connected for each $i = 1,\ldots,m$.
$$\mathcal{S}\ev{P,X} := \{ S = \{  A_1, \ldots, A_m \} \in \mathcal{S} \ev{X} : P_i \cong A_i \} $$
We write $P\vdash X$ if and only if $\mathcal{S}\ev{P,X}\neq \emptyset$. The relation $\vdash$ forms a ranked poset on $\mathcal{V}^{\infty}$, with the rank function $r\ev{ P } := -c\ev{P}$. Note that 
$$ 
\mathcal{S} \ev{ X } = \bigsqcup_{P\vdash X} \mathcal{S}\ev{P,X} 
$$
is a disjoint union, and the equivalence relation $\unsim$ restricts to $\mathcal{S}\ev{P,X}/\unsim := \widetilde{\mathcal{S}}\ev{ P, X }$. In particular, $\widetilde{\mathcal{S}}\ev{X, X}$ is a singleton.

\begin{example}
Consider the simple host graph $\mathcal{H}$ and its infragraph $X$ (see Figure \ref{fig:infragraph_example_2_12}).

\begin{figure}[ht]
~ \qquad \qquad \qquad
\begin{subfigure}[t]{0.3\textwidth}%
\centering
\includegraphics[width=2in]{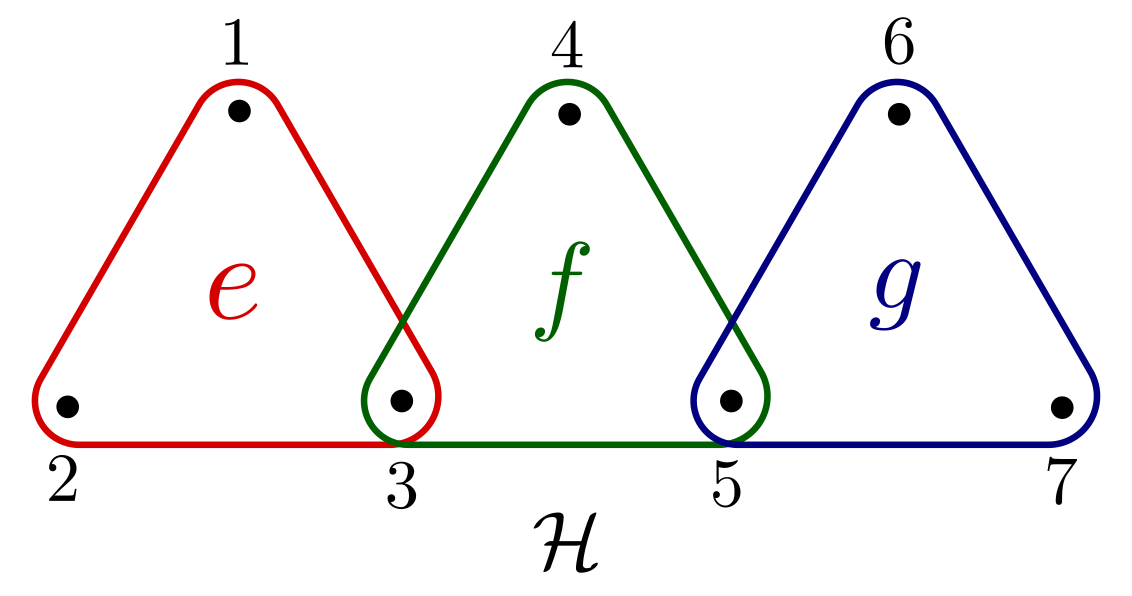} 
\end{subfigure}
~ \qquad\qquad
\begin{subfigure}[t]{0.3\textwidth}%
\centering
\includegraphics[width=2in]{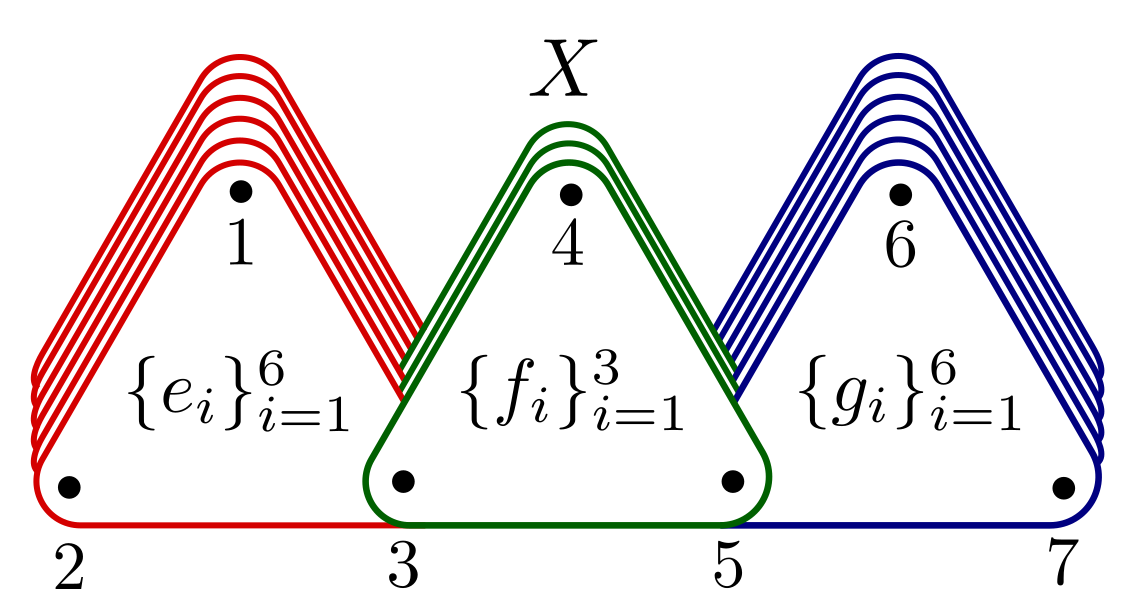} 
\end{subfigure}
\caption{The simple host hypergraph $\mathcal{H}$, its infragraph $X \in \text{Inf}\ev{\mathcal{H}}$.}
\label{fig:infragraph_example_2_12}
\end{figure}

Consider the Veblen hypergraph $P$, which partitions $X$, i.e., $P\vdash X$ (see Figure \ref{fig:infragraph_example_2_3}). 
\begin{figure}[ht]
\centering
\includegraphics[width=3.3in]{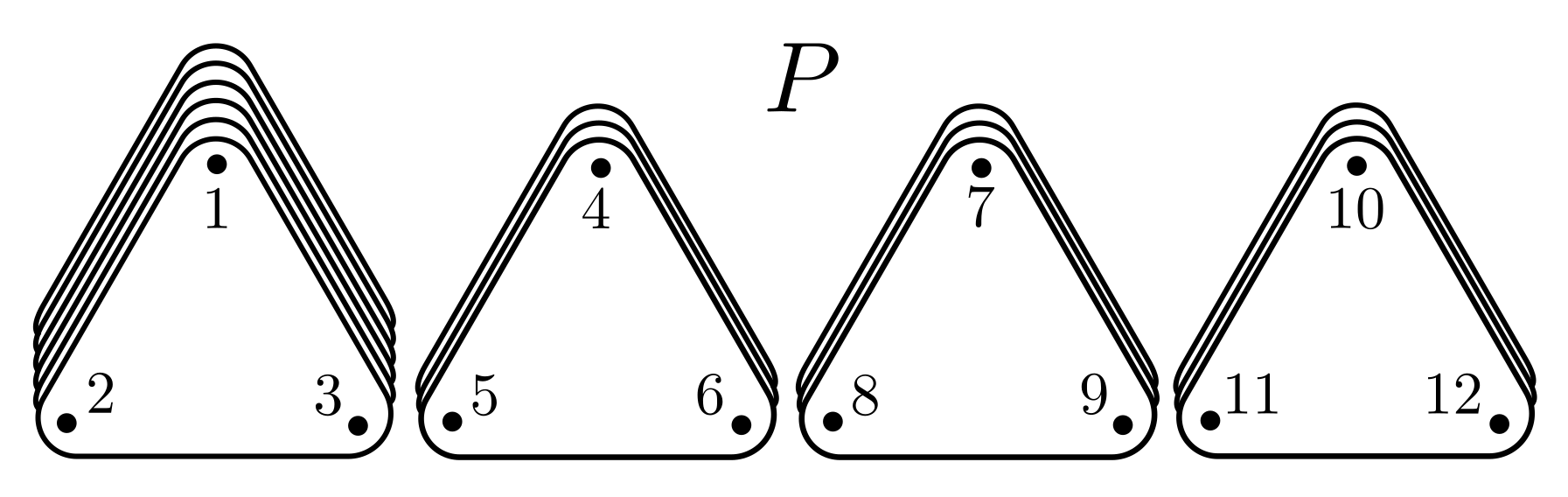} 
\caption{The Veblen hypergraph $P$, with edge-multiplicities $6,3,3,3$.}
\label{fig:infragraph_example_2_3}
\end{figure}

Then, we have $\widetilde{\mathcal{S}}\ev{P, X} =  \{ [S]_{\unsim}, [T]_{\unsim}\}$, where:
\begin{align*}
&  S = \{ { \color{red!80!black}{ \{e_i\}  } }_{i=1}^{6}, { \color{green!50!black}{ \{f_i\}  } }_{i=1}^{3} ,  { \color{blue!60!black}{ \{g_i\}  } }_{i=1}^{3}  , { \color{blue!60!black}{ \{g_i\}  } }_{i=4}^{6}  \} \text{ and } T = \{ { \color{blue!60!black}{ \{g_i\}  } }_{i=1}^{6}, { \color{green!50!black}{ \{f_i\}  } }_{i=1}^{3} ,  { \color{red!80!black}{ \{e_i\}  } }_{i=1}^{3}, { \color{red!80!black}{ \{e_i\}  } }_{i=4}^{6} \}, \text{ and,} \\
& \alpha_{  [S]_{\unsim} } = \alpha_{  [T]_{\unsim} } = 2!
\end{align*}

\end{example}

The following is a variation of Kocay's Lemma (cf. \cite{kocay_recons_spanning} and \cite[Lemma 1.3]{thatte}). 

\begin{lemma}[Kocay's Lemma]
\label{kocay}
If $H = G_1\sqcup \ldots \sqcup G_m  \in \mathcal{V}^m$ is a Veblen hypergraph (where $\{G_i\}_{i=1}^{m}$ are connected), then
\begin{align*}
\nu_H \ev{\# H\subseteq \mathcal{H}}  & =\prod_{ i = 1}^{m}\ev{\# G_i\subseteq \mathcal{H}} = \nu_H  \sum_{X\in \textup{Inf}\ev{\mathcal{H}}} \sum_{ \mathbf{S} \in \widetilde{\mathcal{S}} \ev{P, X}} \dfrac{1}{\alpha_\mathbf{S} }
\end{align*}
\end{lemma}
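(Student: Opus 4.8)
The first equality is immediate from the definitions: since $H=G_1\sqcup\cdots\sqcup G_m$ is disconnected, $(\#H\subseteq\mathcal{H})=\tfrac1{\nu_H}\prod_{i=1}^m(\#G_i\subseteq\mathcal{H})$ by definition, so multiplying by $\nu_H$ gives $\nu_H(\#H\subseteq\mathcal{H})=\prod_{i=1}^m(\#G_i\subseteq\mathcal{H})$. The substance is the second equality (where the $P$ in the statement should be read as $H$), and I would prove it by a double count. By definition $(\#G_i\subseteq\mathcal{H})=|\{X\in\textup{Inf}(\mathcal{H}):X\cong G_i\}|$, so $\prod_{i=1}^m(\#G_i\subseteq\mathcal{H})$ counts ordered tuples $(X_1,\dots,X_m)$ of infragraphs of $\mathcal{H}$ with $X_i\cong G_i$. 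To each tuple associate its \emph{overlay} $X:=X_1\uplus\cdots\uplus X_m$, the multi-hypergraph on $\bigcup_i V(X_i)$ whose edge-multiplicity over each $k$-subset is the sum of the corresponding multiplicities of the $X_i$; one checks $X$ is $k$-valent with $\underline X$ a subgraph of $\mathcal{H}$ (so $X\in\textup{Inf}(\mathcal{H})$) and that this respects $\approx$. The plan is then to group the tuples by their overlay and evaluate $\sum_{X\in\textup{Inf}(\mathcal{H})}\#\{\text{tuples with overlay }X\}$.

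The crux is identifying the fiber over a fixed $X$. A tuple with overlay $X$ records, for each parallel class $\mathbf{e}$ of $E(X)$, only the composition $\bigl(m_{X_1}(\mathbf{e}),\dots,m_{X_m}(\mathbf{e})\bigr)$ of $m_X(\mathbf{e})$, not which individual edges lie in which $X_i$; conversely every such family of compositions whose $i$-th layer is isomorphic to $G_i$ comes from a tuple. Hence choosing an actual representative is well-defined exactly modulo $\textup{Sym}_\sim(E(X))$, the group of permutations preserving each parallel class, and the fiber over $X$ is in bijection with the set of $\textup{Sym}_\sim(E(X))$-orbits of ordered set-partitions $(E_1,\dots,E_m)$ of $E(X)$ in which the sub-multi-hypergraph spanned by $E_i$ is isomorphic to $G_i$ (the blocks are nonempty because each $G_i$ has an edge). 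Forgetting the order of the blocks is a surjection of this set of ordered partitions onto $\mathcal{S}(H,X)$ that is $\nu_H$-to-one — in a set-partition the $m$ blocks are distinct, and there are $\prod_i\nu_i!=\nu_H$ ways to place them into the labelled slots $1,\dots,m$ respecting isomorphism type — and it is $\textup{Sym}_\sim(E(X))$-equivariant, so the orbits of unordered partitions are precisely the elements of $\widetilde{\mathcal{S}}(H,X)$.

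To finish, fix $\mathbf{S}\in\widetilde{\mathcal{S}}(H,X)$ and count the ordered orbits lying above it. Choosing $S=\{A_1,\dots,A_m\}\in\mathbf{S}$, these orbits are in bijection with the orbits of $\textup{Stab}_{\textup{Sym}_\sim(E(X))}(S)$ acting on the $\nu_H$ orderings of $S$; a permutation in $\textup{Sym}_\sim(E(X))$ fixing $S$ sends block $A_i$ to block $A_j$ exactly when $A_i$ and $A_j$ have the same edge-multiplicity profile across the parallel classes, i.e. exactly when $A_i\sim A_j$, so the induced group of block-permutations is $\prod_\ell\textup{Sym}\bigl([B_\ell]_\sim\bigr)$ over the $\sim$-classes of $S$. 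An orbit-stabilizer count of this product acting on the orderings of $S$ then yields exactly $\nu_H/\alpha_{\mathbf{S}}$, with $\alpha_{\mathbf{S}}$ emerging as the product over the $\sim$-classes of $S$ of the factorials of their sizes (one should reconcile this with the normalization $\alpha_{\mathbf{S}}$ as used). Thus $\#\{\text{tuples with overlay }X\}=\sum_{\mathbf{S}\in\widetilde{\mathcal{S}}(H,X)}\nu_H/\alpha_{\mathbf{S}}$, and summing over $X\in\textup{Inf}(\mathcal{H})$ gives $\prod_{i=1}^m(\#G_i\subseteq\mathcal{H})=\nu_H\sum_{X}\sum_{\mathbf{S}\in\widetilde{\mathcal{S}}(H,X)}1/\alpha_{\mathbf{S}}$, as claimed.

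The step I expect to be the main obstacle is exactly this final bookkeeping — juggling the three nested symmetries ($\approx$ on infragraphs, $\textup{Sym}_\sim$ on the edges of the overlay, and reordering of the blocks) and verifying that the orbit-stabilizer computation reproduces precisely the constant $\alpha_{\mathbf{S}}$ from the statement. A secondary technical nuisance is handling vertex sets in the overlay (components of $H$ supported on disjoint vertex sets, and the convention on isolated vertices) so that the fiber-counting correspondence is genuinely a bijection.
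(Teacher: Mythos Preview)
Your approach is essentially the same as the paper's: it defines the same surjection $\varepsilon$ from tuples $(X_1,\ldots,X_m)$ onto $\bigsqcup_Y \widetilde{\mathcal{S}}(H,Y)$ and simply asserts (without your orbit--stabilizer detail) that each fiber has size $\nu_{\mathbf{S}}/\alpha_{\mathbf{S}}$. Your flagged concern about $\alpha_{\mathbf{S}}$ is justified --- the paper's written definition $\alpha_S=\prod_j |[B_j]_\sim|$ appears to be a typo for $\prod_j |[B_j]_\sim|!$, which is what both the paper's own worked examples (writing ``$\alpha_{\mathbf{S}}=2!$'') and your computation produce.
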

\begin{proof}
We define a mapping, 
\begin{align*}
\varepsilon: \prod_{i=1}^{m} \{ X\in \textup{Inf}^{1}\ev{\mathcal{H}} : X\cong G_i \} &\rightarrow \bigsqcup_{Y \in \textup{Inf}\ev{\mathcal{H}} } \widetilde{\mathcal{S}} \ev{P, Y} 
\\
\ev{ X_1,\ldots,X_m } & \mapsto [ \{ X_1, \ldots, X_m \} ]_{\unsim} \in \widetilde{\mathcal{S}} \ev{P, X_1\cup \ldots \cup X_m} 
\end{align*}
The mapping $\varepsilon$ is a surjection, where, for each $Y\in \textup{Inf}\ev{\mathcal{H}}$ and for each $\mathbf{S} = [ \{ X_1, \ldots, X_m \} ]_{\unsim} \in \widetilde{\mathcal{S}} \ev{P, Y}$, we have $|\varepsilon^{-1} \ev{ \mathbf{S} }| = \dfrac{ \nu_{ \mathbf{S} }  }{ \alpha_{  \mathbf{S} }}$. Therefore, 
$$ \prod_{i=1}^{m} \ev{\# G_i \subseteq \mathcal{H}}  = \sum_{Y \in \textup{Inf}\ev{\mathcal{H}} } \sum_{  \mathbf{S} \in \widetilde{\mathcal{S}} \ev{P, Y} } \dfrac{ \nu_{ \mathbf{S} }  }{ \alpha_{  \mathbf{S} }} = \nu_{ P } \sum_{Y \in \textup{Inf}\ev{\mathcal{H}} } \sum_{  \mathbf{S} \in \widetilde{\mathcal{S}} \ev{P, Y} } \dfrac{ 1  }{ \alpha_{  \mathbf{S} } } $$
since $\nu_{ \mathbf{S} } = \nu_P$ for all $\mathbf{S}\in \widetilde{\mathcal{S}} \ev{P, Y} $ and $Y\in \textup{Inf}\ev{\mathcal{H}}$. 
\end{proof}

For a Veblen hypergraph $X$ on $d\geq 0$ edges and a non-negative integer $n\geq 0$, let 
$$\Delta\ev{n,X} : = \sum_{ \mathbf{S} \in \widetilde{\mathcal{S}} \ev{X}}  \ev{ -\ev{k-1}^{n}}^{c\ev{\mathbf{S}}} \dfrac{C_\mathbf{S}}{\alpha_\mathbf{S} } $$
Note that,
$$\Delta\ev{0,X} = \sum_{ \mathbf{S} \in \widetilde{\mathcal{S}} \ev{X}}  \ev{ -1}^{c\ev{\mathbf{S}}} \dfrac{C_\mathbf{S}}{\alpha_\mathbf{S} } $$
which does not depend on $n$. 

Kocay's Lemma and this short notation allow an alternative expression of $\widetilde{\phi}_{\mathcal{H}} \ev{ t }$: 
\begin{proposition}
\label{Kocay_chi}
$$\widetilde{\phi}_{\mathcal{H}} \ev{ t } = \sum_{ X\in \textup{Inf}\ev{\mathcal{H}} } t^{-|E\ev{X}| } \Delta\ev{ | V\ev{ \mathcal{H} } | , X} $$ 
\end{proposition}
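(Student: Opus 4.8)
The plan is to start from the Harary--Sachs Theorem for hypergraphs (\Cref{thm:harary_sachs_hyper}), which already expresses $\widetilde{\phi}_{\mathcal{H}}\ev{t}$ as a sum over isomorphism classes of Veblen multi-hypergraphs $H \in \mathcal{V}^{\infty}$, and then to rewrite each ``stacking count'' $\ev{\# H \subseteq \mathcal{H}}$ by means of Kocay's Lemma (\Cref{kocay}). Dividing the conclusion of \Cref{kocay} by $\nu_H$ (and taking $P = H$ in the notation there) gives $\ev{\# H \subseteq \mathcal{H}} = \sum_{X \in \textup{Inf}\ev{\mathcal{H}}} \sum_{\mathbf{S} \in \widetilde{\mathcal{S}}\ev{H, X}} \alpha_{\mathbf{S}}^{-1}$. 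Substituting this into \Cref{thm:harary_sachs_hyper} and interchanging the sums (legitimate since, grouped by number of edges, all sums involved are finite), I would arrive at
\begin{align*}
\widetilde{\phi}_{\mathcal{H}}\ev{t} = \sum_{X \in \textup{Inf}\ev{\mathcal{H}}} \; \sum_{H \vdash X} \; \sum_{\mathbf{S} \in \widetilde{\mathcal{S}}\ev{H, X}} t^{-|E\ev{H}|}\, \ev{-\ev{k-1}^{n}}^{c\ev{H}} C_H \, \frac{1}{\alpha_{\mathbf{S}}},
\end{align*}
where $n := |V\ev{\mathcal{H}}|$ and I have used that $\widetilde{\mathcal{S}}\ev{H, X}$ is nonempty only when $H \vdash X$.

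Next I would fix $X \in \textup{Inf}\ev{\mathcal{H}}$ and recognize the inner double sum as $t^{-|E\ev{X}|}\Delta\ev{n, X}$. Three observations drive this: (i) the disjoint union $\mathcal{S}\ev{X} = \bigsqcup_{H \vdash X}\mathcal{S}\ev{H, X}$ descends to a disjoint union $\widetilde{\mathcal{S}}\ev{X} = \bigsqcup_{H \vdash X}\widetilde{\mathcal{S}}\ev{H, X}$, because an element of $\textup{Sym}_{\unsim}\ev{E}$ carries a partition of $E\ev{X}$ to one whose blocks are pairwise isomorphic to the original blocks, so the isomorphism type of the disjoint union of the blocks is an invariant of the $\unsim$-class; (ii) for $\mathbf{S} \in \widetilde{\mathcal{S}}\ev{H, X}$ the blocks of $\mathbf{S}$ partition $E\ev{X}$ into pieces isomorphic to the components of $H$, hence $|E\ev{H}| = |E\ev{X}|$ and $c\ev{H} = c\ev{\mathbf{S}}$; and (iii) since the associated coefficient is multiplicative over components, $C_H = C_{\mathbf{S}}$. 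Combining (i)--(iii), the double sum over $H \vdash X$ and $\mathbf{S} \in \widetilde{\mathcal{S}}\ev{H,X}$ collapses to $t^{-|E\ev{X}|}\sum_{\mathbf{S} \in \widetilde{\mathcal{S}}\ev{X}} \ev{-\ev{k-1}^{n}}^{c\ev{\mathbf{S}}} C_{\mathbf{S}}/\alpha_{\mathbf{S}} = t^{-|E\ev{X}|}\Delta\ev{n, X}$, and summing over $X \in \textup{Inf}\ev{\mathcal{H}}$ yields the proposition.

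The only genuinely delicate point is the bookkeeping in the reindexing, so I expect the main obstacle to be verifying cleanly that $c\ev{\mathbf{S}}$ and $C_{\mathbf{S}}$ are honest invariants of the $\unsim$-class $\mathbf{S}$ (the analogous fact for $\alpha_{\mathbf{S}}$ and $\nu_{\mathbf{S}}$ is already recorded in the excerpt) and that the decomposition $\widetilde{\mathcal{S}}\ev{X} = \bigsqcup_{H\vdash X}\widetilde{\mathcal{S}}\ev{H, X}$ is truly disjoint, i.e.\ that each $\mathbf{S}\in\widetilde{\mathcal{S}}\ev{X}$ is attached to a unique $H\in\mathcal{V}^{\infty}$. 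Once these are settled, the remainder is a routine application of Fubini to finite sums of Laurent monomials in $t^{-1}$, checked coefficientwise since $\mathcal{V}^{\infty}_d$, $\textup{Inf}_d\ev{\mathcal{H}}$ and $\widetilde{\mathcal{S}}\ev{X}$ are finite for every $d$.
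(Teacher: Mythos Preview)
Your proposal is correct and follows essentially the same route as the paper's own proof: start from \Cref{thm:harary_sachs_hyper}, replace $\ev{\# H\subseteq \mathcal{H}}$ via Kocay's Lemma, interchange sums, and then use $\widetilde{\mathcal{S}}\ev{X}=\bigsqcup_{H\vdash X}\widetilde{\mathcal{S}}\ev{H,X}$ together with $|E\ev{H}|=|E\ev{X}|$, $c\ev{H}=c\ev{\mathbf{S}}$, and $C_H=C_{\mathbf{S}}$ to collapse the inner sum to $\Delta\ev{n,X}$. Your attention to why $c\ev{\mathbf{S}}$ and $C_{\mathbf{S}}$ are $\unsim$-invariants and why the decomposition is disjoint is exactly the bookkeeping the paper invokes without further comment.
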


\begin{proof}
\begin{align*}
&\phi_{\mathcal{H}}\ev{ t } = \sum_{H \in \mathcal{V}^{\infty} } t^{-| E\ev{H} | } \ev{ -\ev{k-1}^{n}}^{c\ev{H}} C_H \ev{\# H\subseteq \mathcal{H}}   && \text{ by \Cref{thm:harary_sachs_hyper}}\\ 
& = \sum_{H \in \mathcal{V}^{\infty} } t^{-| E\ev{H} | } \ev{ -\ev{k-1}^{n}}^{c\ev{H}} C_H  \sum_{ \substack{X\in \textup{Inf}\ev{\mathcal{H}} \\ H \vdash X } } \sum_{ \mathbf{S} \in \widetilde{\mathcal{S}} \ev{H, X}} \dfrac{1}{\alpha_\mathbf{S} } && \text{ by Kocay's Lemma (\Cref{kocay})}\\
& = \sum_{X\in \textup{Inf}\ev{\mathcal{H}}} t^{-|E\ev{X}| } \sum_{ \substack{ H \in \mathcal{V}^{\infty} \\ H\vdash X } } \ev{ -\ev{k-1}^{n}}^{c\ev{H}} \sum_{ \mathbf{S} \in \widetilde{\mathcal{S}} \ev{H, X}} \dfrac{C_H}{\alpha_\mathbf{S} } && \\
& = \sum_{X\in \textup{Inf}\ev{\mathcal{H}}}  t^{-|E\ev{X}| } \sum_{ \mathbf{S} \in \widetilde{\mathcal{S}} \ev{X}}  \ev{ -\ev{k-1}^{n}}^{c\ev{\mathbf{S}}} \dfrac{C_\mathbf{S}}{\alpha_\mathbf{S} } 
\end{align*}
since $\widetilde{\mathcal{S}} \ev{X} = \bigsqcup_{H \in \mathcal{V}^{\infty}} \widetilde{\mathcal{S}} \ev{H, X} $ and $C_\mathbf{S} = C_H $ for each $\mathbf{S} \in \widetilde{\mathcal{S}} \ev{H, X}$

\end{proof}

\subsection{Heaps of Pieces on the Characteristic Polynomial of a Simple Hypergraph}
The weight of a heap is defined as the product of the weights of its pieces. To show that infragraphs can be taken as pieces, we need to show that the weight function we intend to use is multiplicative over disjoint components. First, we show that for a fixed simple host graph $\mathcal{H}$, the value of $\Delta\ev{n,X}$ is multiplicative, where $n\geq 0$ and $X\in \textup{Inf}\ev{\mathcal{H}}$. 

\begin{proposition}
\label{prop:cover_mult}
Let $X:= X_1 \sqcup X_2$ be an infragraph of a host hypergraph $\mathcal{H}$, where $X_1$ and $X_2$ are disjoint. Then, we have 
$$ \Delta\ev{n,X} = \Delta\ev{n,X_1} \cdot \Delta\ev{n,X_2}$$
where $n\geq 0$ is a non-negative integer. 
\end{proposition}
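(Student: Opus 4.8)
The plan is to exhibit a bijection $\widetilde{\mathcal{S}}\ev{X} \cong \widetilde{\mathcal{S}}\ev{X_1}\times\widetilde{\mathcal{S}}\ev{X_2}$ under which each of the three ingredients of $\Delta$ — the part-count $c\ev{\mathbf{S}}$, the associated coefficient $C_{\mathbf{S}}$, and the multiplicity $\alpha_{\mathbf{S}}$ — factors multiplicatively, and then simply to sum the product over the two factors.

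First I would observe that, since $X_1$ and $X_2$ are disjoint (hence vertex-disjoint), any connected Veblen sub-multi-hypergraph $A$ with $A\subseteq E\ev{X}=E\ev{X_1}\sqcup E\ev{X_2}$ must satisfy $A\subseteq E\ev{X_1}$ or $A\subseteq E\ev{X_2}$: a connected piece cannot use edges from both components without being disconnected. Hence every decomposition $S=\{A_1,\dots,A_m\}\in\mathcal{S}\ev{X}$ splits uniquely as $S=S_1\sqcup S_2$, where $S_i\in\mathcal{S}\ev{X_i}$ collects the parts of $S$ whose edge set lies in $E\ev{X_i}$. Next I would note that two parallel edges of $X$ share a common vertex set, which lies entirely in one component, so parallelism never relates an edge of $X_1$ to an edge of $X_2$; consequently $\text{Sym}_{\unsim}\ev{E\ev{X}}=\text{Sym}_{\unsim}\ev{E\ev{X_1}}\times\text{Sym}_{\unsim}\ev{E\ev{X_2}}$, and the induced equivalence $\unsim$ on $\mathcal{P}\ev{E\ev{X}}$ and on $\mathcal{S}\ev{X}$ is compatible with the splitting. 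This gives a well-defined bijection $[S]_{\unsim}\mapsto\ev{[S_1]_{\unsim},[S_2]_{\unsim}}$.

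Along this bijection the three factorizations are then routine to check. The part-count is immediate: $c\ev{\mathbf{S}}=|S|=|S_1|+|S_2|=c\ev{\mathbf{S}_1}+c\ev{\mathbf{S}_2}$, so $\ev{-\ev{k-1}^{n}}^{c\ev{\mathbf{S}}}=\ev{-\ev{k-1}^{n}}^{c\ev{\mathbf{S}_1}}\ev{-\ev{k-1}^{n}}^{c\ev{\mathbf{S}_2}}$. For $\alpha$, since every parallelism class of $E\ev{X}$ lies in one $E\ev{X_i}$, the quotient $S/\!\unsim$ partitions into $S_1/\!\unsim$ and $S_2/\!\unsim$, and $|[B]_{\unsim}|$ is the same computed in $\text{Sym}_{\unsim}\ev{E\ev{X}}$ or in $\text{Sym}_{\unsim}\ev{E\ev{X_i}}$; hence $\alpha_{\mathbf{S}}=\alpha_{\mathbf{S}_1}\alpha_{\mathbf{S}_2}$. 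For $C$, writing $\mathbf{S}\in\widetilde{\mathcal{S}}\ev{H,X}$ for the Veblen hypergraph $H$ whose components realize the isomorphism types of the parts of $S$, we have $H=H_1\sqcup H_2$ with $\mathbf{S}_i\in\widetilde{\mathcal{S}}\ev{H_i,X_i}$, so $C_{\mathbf{S}}=C_H=C_{H_1}C_{H_2}=C_{\mathbf{S}_1}C_{\mathbf{S}_2}$ by the multiplicative definition of the associated coefficient over disjoint unions (and the identity $C_{\mathbf{S}}=C_H$ on $\widetilde{\mathcal{S}}\ev{H,X}$ used in the proof of \Cref{Kocay_chi}).

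Substituting into the definition of $\Delta$ and reindexing via the bijection then gives
$$
\Delta\ev{n,X}=\sum_{\mathbf{S}\in\widetilde{\mathcal{S}}\ev{X}}\ev{-\ev{k-1}^{n}}^{c\ev{\mathbf{S}}}\frac{C_{\mathbf{S}}}{\alpha_{\mathbf{S}}}=\sum_{\mathbf{S}_1\in\widetilde{\mathcal{S}}\ev{X_1}}\ev{-\ev{k-1}^{n}}^{c\ev{\mathbf{S}_1}}\frac{C_{\mathbf{S}_1}}{\alpha_{\mathbf{S}_1}}\cdot\sum_{\mathbf{S}_2\in\widetilde{\mathcal{S}}\ev{X_2}}\ev{-\ev{k-1}^{n}}^{c\ev{\mathbf{S}_2}}\frac{C_{\mathbf{S}_2}}{\alpha_{\mathbf{S}_2}}=\Delta\ev{n,X_1}\,\Delta\ev{n,X_2},
$$
as claimed. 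The main obstacle I anticipate is the bookkeeping around the equivalence relation $\unsim$: one must be careful that $\text{Sym}_{\unsim}\ev{E\ev{X}}$ genuinely splits as a direct product across the two components and therefore that $\widetilde{\mathcal{S}}$ together with the invariant $\alpha$ respects the decomposition $X=X_1\sqcup X_2$; once that is nailed down, the splitting of connected pieces, the additivity of $c$, and the multiplicativity of $C$ are all straightforward.
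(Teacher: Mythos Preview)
Your proposal is correct and follows essentially the same approach as the paper: both establish the bijection $\widetilde{\mathcal{S}}\ev{X_1}\times\widetilde{\mathcal{S}}\ev{X_2}\leftrightarrow\widetilde{\mathcal{S}}\ev{X_1\sqcup X_2}$ and then factor the sum using the additivity of $c$ and the multiplicativity of $\alpha$ and $C$. Your version is in fact slightly more careful in justifying why the bijection and the $\alpha$-factorization hold (via the splitting of $\text{Sym}_{\unsim}\ev{E\ev{X}}$), whereas the paper simply states the bijection and computes.
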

\begin{proof}
We have a bijection, 
\begin{align*}
\widetilde{\mathcal{S}} \ev{ X_1 } \times \widetilde{\mathcal{S}} \ev{ X_2 } &\rightarrow \widetilde{\mathcal{S}} \ev{ X_1\sqcup X_2 } \\
\ev{ \mathbf{S}_1 ,  \mathbf{S}_2}  & \mapsto \mathbf{S}_1\sqcup \mathbf{S}_2:=  [ \{ A_1,\ldots, A_m, B_1, \ldots, B_{r}\} ]_{\unsim}
\end{align*}
where $\mathbf{S}_1 = [\{ A_1,\ldots, A_m \} ]_{\unsim}$ and $\mathbf{S}_2 =[  \{ B_1,\ldots, B_{r} \} ]_{\unsim} $. Therefore,

\begin{align*}
&\Delta\ev{n,X} = \sum_{\mathbf{S} \in \widetilde{\mathcal{S}} \ev{ X_1 \sqcup X_2 } }  \ev{-\ev{k-1}^{n}}^{ c\ev{ \mathbf{S} } }  \dfrac{1}{\alpha_{\mathbf{S} }}  C_{ \mathbf{S} 	}  = \sum_{ \substack{ \mathbf{S}_1 \in \widetilde{\mathcal{S}} \ev{ X_1 }  \\ \mathbf{S}_2 \in \widetilde{\mathcal{S}} \ev{ X_2 } } }  \ev{-\ev{k-1}^{n}}^{c\ev{ \mathbf{S}_1 \sqcup \mathbf{S}_2  } }  \dfrac{1}{\alpha_{ \mathbf{S}_1 \sqcup \mathbf{S}_2} } C_{ \mathbf{S}_1 \sqcup \mathbf{S}_2 }  \\
& = \sum_{ \substack{ \mathbf{S}_1 \in \widetilde{\mathcal{S}} \ev{ X_1 }  \\ \mathbf{S}_2 \in \widetilde{\mathcal{S}} \ev{ X_2 } } }  \ev{-\ev{k-1}^{n}}^{c\ev{ \mathbf{S}_1  } + c\ev{ \mathbf{S}_2 }} \dfrac{1}{\alpha_{ \mathbf{S}_1 }}  \dfrac{1}{\alpha_{ \mathbf{S}_2 }} C_{ \mathbf{S}_1 }C_{ \mathbf{S}_2 } \\
& = \ew{ \sum_{ \mathbf{S}_1 \in \widetilde{\mathcal{S}} \ev{ X_1 } } \ev{-\ev{k-1}^{n}}^{c\ev{ \mathbf{S}_1 } } \dfrac{1}{ \alpha_{  \mathbf{S}_1  } } C_{ \mathbf{S}_1 } } \cdot \ew{ \sum_{ \mathbf{S}_2 \in \widetilde{\mathcal{S}} \ev{ X_2 } } \ev{-\ev{k-1}^{n}}^{c\ev{ \mathbf{S}_2 } } \dfrac{1}{ \alpha_{  \mathbf{S}_2  } } C_{ \mathbf{S}_2 } }  = \Delta\ev{n,X_1} \cdot \Delta\ev{n,X_2}
\end{align*}
where we used the multiplicativity of the associated coefficient. 
\end{proof}

We are ready to apply the Heaps of Pieces framework on hypergraphs. Let $\mathcal{H}$ be a simple hypergraph of rank $k\geq 3$, with $|V\ev{ \mathcal{H} }| = n$ vertices. 

We take connected infragraphs as the set of pieces:
$$
\mathcal{B}\ev{ \mathcal{H} } := \textup{Inf}^{1} \ev{\mathcal{H} }
$$
Two pieces $X_1$ and $X_2$ are \textit{concurrent} (denoted $X_1 \mathcal{R} X_2$) if and only if they share a vertex. 

\begin{theorem}
\label{thm:heaps_hyper}
Given a simple hypergraph $\mathcal{H}$ with $n$ vertices, then $\widetilde{\phi}_\mathcal{H} \ev{ t }$ can be expressed as a sum over trivial heaps:
$$ \widetilde{\phi}_\mathcal{H} \ev{ t } = \sum_{T\in \mathbbm{t}\ev{\mathcal{B}\ev{\mathcal{H}},\mathcal{R}}} \ev{-1}^{|T|}  w_n\ev{T} $$ 
where $w_n\ev{X} = -\Delta\ev{n,X} t^{- |E\ev{X}|  } $ is the weight function, for a given piece $X \in \mathcal{B}\ev{ \mathcal{H} } $.
\end{theorem}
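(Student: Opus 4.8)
The plan is to read off the identity directly from the alternative expression of the characteristic polynomial in \Cref{Kocay_chi}, using the multiplicativity of $\Delta$ established in \Cref{prop:cover_mult} and the defining property that the weight of a heap is the product of the weights of its pieces. The only combinatorial input needed beyond these is the observation that, under the concurrence relation ``sharing a vertex'', trivial heaps of connected infragraphs are precisely infragraphs of $\mathcal{H}$ broken up into connected components.

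First I would make that bijection explicit. Given $X\in\textup{Inf}^{m}\ev{\mathcal{H}}$ with connected components $\beta_1,\ldots,\beta_m$, each $\beta_i$ is a connected $k$-valent multi-hypergraph whose flattening is a subgraph of $\mathcal{H}$, so $\beta_i\in\mathcal{B}\ev{\mathcal{H}}=\textup{Inf}^{1}\ev{\mathcal{H}}$; the $\beta_i$ are pairwise vertex-disjoint, hence pairwise non-concurrent, so by the composition rule for heaps $T:=\beta_1\circ\cdots\circ\beta_m$ is a trivial heap with $|T|=m$. Conversely, a trivial heap is a set of pairwise non-concurrent — that is, pairwise vertex-disjoint — connected infragraphs, and their disjoint union is again an infragraph of $\mathcal{H}$ (vertex-disjointness keeps the flattening simple and inside $\mathcal{H}$, and $k$-valence is inherited componentwise). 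These operations are mutually inverse, giving a bijection $\Phi\colon\textup{Inf}\ev{\mathcal{H}}\to\mathbbm{t}\ev{\mathcal{B}\ev{\mathcal{H}},\mathcal{R}}$ under which $|E\ev{X}|=\sum_{i=1}^{m}|E\ev{\beta_i}|$ and $c\ev{X}=m=|T|$; the empty infragraph corresponds to the empty trivial heap, each contributing $1$. Applying \Cref{prop:cover_mult} inductively then yields $\Delta\ev{n,X}=\prod_{i=1}^{m}\Delta\ev{n,\beta_i}$ for $X=\beta_1\sqcup\cdots\sqcup\beta_m$, where $n=|V\ev{\mathcal{H}}|$.

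Combining \Cref{Kocay_chi}, the bijection $\Phi$, multiplicativity of $\Delta$, the identity $w_n\ev{\beta}=-\Delta\ev{n,\beta}\,t^{-|E\ev{\beta}|}$, and the fact that $w_n\ev{T}=\prod_{\beta\in T}w_n\ev{\beta}$, the computation I would carry out is
\begin{align*}
\widetilde{\phi}_\mathcal{H}\ev{t}
&= \sum_{X\in\textup{Inf}\ev{\mathcal{H}}} t^{-|E\ev{X}|}\,\Delta\ev{n,X}
= \sum_{T\in\mathbbm{t}\ev{\mathcal{B}\ev{\mathcal{H}},\mathcal{R}}}\ \prod_{\beta\in T}\bigl(\Delta\ev{n,\beta}\,t^{-|E\ev{\beta}|}\bigr) \\
&= \sum_{T\in\mathbbm{t}\ev{\mathcal{B}\ev{\mathcal{H}},\mathcal{R}}}\ \prod_{\beta\in T}\bigl(-w_n\ev{\beta}\bigr)
= \sum_{T\in\mathbbm{t}\ev{\mathcal{B}\ev{\mathcal{H}},\mathcal{R}}} \ev{-1}^{|T|}\prod_{\beta\in T}w_n\ev{\beta}
= \sum_{T\in\mathbbm{t}\ev{\mathcal{B}\ev{\mathcal{H}},\mathcal{R}}} \ev{-1}^{|T|}\,w_n\ev{T},
\end{align*}
which is the asserted formula. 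I do not anticipate a genuine obstacle: all the analytic content is already packaged in \Cref{Kocay_chi} and \Cref{prop:cover_mult}. The one point requiring care is the bookkeeping of the bijection $\Phi$ at the level of equivalence classes (the $\approx$-relation on infragraphs versus distinctness of pieces in a trivial heap) and the trivial/empty term, so that the two sums genuinely match term by term; this is routine once the conventions are pinned down.
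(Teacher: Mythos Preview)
Your proposal is correct and follows essentially the same approach as the paper: both arguments reduce to \Cref{Kocay_chi} together with the multiplicativity of $\Delta$ from \Cref{prop:cover_mult}, and then read off the trivial-heap sum by identifying a trivial heap with an infragraph decomposed into its vertex-disjoint connected components. The paper presents this slightly more tersely (computing $w_n(Y)=(-1)^{c(Y)}t^{-|E(Y)|}\Delta(n,Y)$ for a trivial heap $Y$ and then invoking \Cref{Kocay_chi}), while you spell out the bijection $\Phi$ and the empty-heap term explicitly; but the substance is the same.
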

\begin{proof}
The weight of a piece $X \in \mathcal{B}\ev{ \mathcal{H} }$ is given by,
$$ w_n\ev{X} = -\Delta\ev{n,X} t^{- |E\ev{X}|  } = - t^{- |E\ev{X}|  }  \sum_{ \mathbf{S} \in \widetilde{\mathcal{S}} \ev{ X }} \ev{ -\ev{k-1}^{n} }^{ c\ev{ \mathbf{S} } } \dfrac{1}{\alpha_{\mathbf{S}}} C_{\mathbf{S} } $$
We define the weight of a trivial heap with $r\geq 1$ pieces, $Y = X_1\sqcup \ldots \sqcup X_r$ (a disjoint union of $r$ connected Veblen hypergraphs), as the product of the weights of its pieces. Hence,
\begin{align*}
& w_n\ev{Y} := \prod_{i=1}^{r} w_n\ev{X_i} = \prod_{i=1}^{r} -\Delta\ev{n,X_i} t^{- | E\ev{X_i} |  }  = \ev{-1}^{c\ev{Y}} t^{- | E\ev{Y} | } \prod_{i=1}^{r} \Delta\ev{n,X_i}  && \\
&= \ev{-1}^{c\ev{Y}} t^{- | E\ev{Y} |  } \Delta\ev{n,X_1 \sqcup \ldots \sqcup X_r}  = \ev{-1}^{c\ev{Y}} t^{- | E\ev{Y} |  } \Delta\ev{n,Y} && \text{by \Cref{prop:cover_mult}}  
\end{align*}
In particular, we have $ t^{- | E\ev{Y} |  } \Delta\ev{n,Y}  = \ev{-1}^{c\ev{Y}}  w_n\ev{Y}$. Therefore,
\begin{align*}
&\widetilde{\phi}_\mathcal{H} \ev{ t } = \sum_{Y\in \textup{Inf}\ev{\mathcal{H}}}   t^{- | E\ev{Y} |  } \Delta\ev{n,Y} = \sum_{T\in \mathbbm{t}\ev{\mathcal{B}\ev{ \mathcal{H} }}} \ev{-1}^{|T|}  w_n\ev{T}  & \text{ by \Cref{Kocay_chi}} 
\end{align*}
\end{proof}
As we will see in the next section, there are only finitely many non-zero contributions to this sum, since a connected infragraph with more edges than the degree of the characteristic polynomial has zero weight (cf. \Cref{rmk:finitely_many_pieces}.)

\subsection{Heaps of Pieces on the Characteristic Polynomial with edge-variables}

\subsection*{Characteristic Polynomial with Edge-Variables}
We consider a variation of \Cref{thm:harary_sachs_hyper}, using indeterminate edge-variables. First, we need some preliminaries:

\begin{definition}[Adjacency Hypermatrix with edge-variables]
\label{def:adjacency_hypermatrix_edge_vars}
Let $\mathcal{H}$ be a simple hypergraph on $n$ vertices, with rank $k\geq 2$. Let $e_{\{j_1,\ldots,j_k\}}$ be formal variables for each $k$-set $\{j_1,\ldots,j_k\}$, where $j_1,\ldots,j_k\in [n]$. The (normalized) \textit{adjacency hypermatrix} $\widehat{\mathbb{A}}_\mathcal{H} = \ev{ a_{j_1,\ldots,j_k} }$ of $\mathcal{H}$ \textit{with edge-variables} is the $n$-dimensional symmetric $k$-hypermatrix with entries
$$ a_{j_1,\ldots,j_k} := \begin{cases}  \ffrac{ e_{\{j_1,\ldots,j_k\}} }{(k-1)!} & \text{ if } \{j_1,\ldots,j_k\} \in E\ev{ \mathcal{H} } \\ 0 & \text{otherwise}
\end{cases}$$
If the edges are enumerated as $E\ev{\mathcal{H}} = \{e_1,\ldots,e_r\}$, then we define the entries as 
$$ a_{j_1,\ldots,j_k} := \begin{cases}  \ffrac{ e_i }{(k-1)!} & \text{ if } \{j_1,\ldots,j_k\} = e_i \in E\ev{ \mathcal{H} } \text{ for some $i$} \\ 0 & \text{otherwise}
\end{cases}$$
\end{definition}

\begin{definition}[Characteristic Polynomial with edge-variables]
Let $\mathcal{H}$ be a simple hypergraph and let $t$ be an indeterminate. The \textit{characteristic polynomial} $\widehat{\phi}_{\mathcal{H}}\ev{t}$ \textit{with edge-variables} is the resultant
$$ \textup{res}  \nabla \ev{ t\cdot \widehat{F}_\mathcal{I} - \widehat{F}_\mathcal{H} }  $$
where $\widehat{F}_\mathcal{H}$ is the Lagrangian polynomial (Definition \ref{def:char_poly}) of the adjacency hypermatrix $\widehat{\mathbb{A}}_\mathcal{H}$ (Definition \ref{def:adjacency_hypermatrix_edge_vars}).
\end{definition}

Let $E\ev{\mathcal{H}} = \{e_1,\ldots, e_r\}$ be the edge-set of a simple hypergraph $\mathcal{H}$. For ease of notation, we use
$$ \mathbf{e}^Y = e_1^{m_Y\ev{e_1}} \cdots e_r^{m_Y\ev{e_r}} $$
for each infragraph $Y \in \textup{Inf}\ev{\mathcal{H}}$. 

\begin{theorem}[Harary-Sachs Theorem with edge-variables]
\label{chi_edge_var}
Given a simple hypergraph $\mathcal{H}$, with edge-set $E\ev{\mathcal{H}} = \{e_1,\ldots, e_r\}$ and order $|V\ev{ \mathcal{H}}| = n$, let $N = n\ev{k-1}^{n-1}$ be the degree of the characteristic polynomial $\widehat{\phi}_{\mathcal{H}}\ev{t}$. Then, the codegree $d$ coefficient of $\widehat{\phi}_{\mathcal{H}}\ev{t}$ is:
\begin{align*}
& [t^{N-d}] \widehat{\phi}_{\mathcal{H}}\ev{t} = \sum_{X\in \textup{Inf}_d\ev{\mathcal{H}} } \mathbf{e}^X \ew{ \sum_{ \mathbf{S} \in \widetilde{\mathcal{S}} \ev{ X }} \ev{ -\ev{k-1}^{n} }^{ c\ev{ \mathbf{S} } } \dfrac{1}{\alpha_{\mathbf{S}}} C_{\mathbf{S} } } = \sum_{X\in \textup{Inf}_d\ev{\mathcal{H}} }  \mathbf{e}^X \Delta\ev{n,X}
\end{align*}
In other words,
$$ \widehat{\phi}_{\mathcal{H}}\ev{t} := t^{-N} \cdot  \widehat{\phi}_{\mathcal{H}}\ev{t} = \sum_{X\in \textup{Inf}\ev{\mathcal{H}} }  \mathbf{e}^X t^{- |E\ev{X}| }  \Delta\ev{n,X} $$

\end{theorem}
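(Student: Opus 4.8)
\emph{Proof proposal.} The plan is to obtain this identity as the edge-variable refinement of the derivation already carried out for the ordinary characteristic polynomial, i.e. of the chain Harary--Sachs (\Cref{thm:harary_sachs_hyper}) $\Rightarrow$ Kocay's Lemma (\Cref{kocay}) $\Rightarrow$ \Cref{Kocay_chi}. First one records that the degree count is unchanged: $\nabla\ev{t\cdot\widehat{F}_\mathcal{I}-\widehat{F}_\mathcal{H}}$ is still a system of $n$ forms of degree $k-1$ in $\mathbf{x}$ regardless of the presence of edge-variables, so by the homogeneity property of the multidimensional resultant (\cite[Theorem 3.1]{usingalg}, as used in the excerpt) one has $\widehat{\phi}_\mathcal{H}\ev{t}=t^{N}\,\textup{res}\,\nabla\ev{\widehat{F}_\mathcal{I}-t^{-1}\widehat{F}_\mathcal{H}}$ with $N=n\ev{k-1}^{n-1}$; hence the normalization $t^{-N}\cdot\widehat{\phi}_\mathcal{H}\ev{t}$ is well-posed and the second displayed formula follows from the first by reading off $\sum_{d\geq 0}[t^{N-d}]\widehat{\phi}_\mathcal{H}\ev{t}\,t^{-d}$.

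The main step is an edge-variable version of \Cref{thm:harary_sachs_hyper}:
\[
[t^{N-d}]\widehat{\phi}_{\mathcal{H}}\ev{t}=\sum_{H\in\mathcal{V}_d^{\infty}}\ev{-\ev{k-1}^n}^{c\ev{H}}C_H\ \widehat{\ev{\#H\subseteq\mathcal{H}}},
\]
where $\widehat{\ev{\#H\subseteq\mathcal{H}}}$ is obtained from $\ev{\#H\subseteq\mathcal{H}}$ by weighting each embedded copy $X\in\textup{Inf}\ev{\mathcal{H}}$ of $H$ by $\mathbf{e}^{X}$. I would justify this by inspecting the proof of \Cref{thm:harary_sachs_hyper} in \cite{cc1}: the resultant is a polynomial in the entries of the adjacency hypermatrix, and replacing each host-edge entry $\tfrac{1}{(k-1)!}$ by $\tfrac{e_i}{(k-1)!}$ multiplies every monomial in that expansion by the corresponding product of edge-variables; moreover each combinatorial summand occurring in the \cite{cc1} derivation (a rooted, arborescence-decorated embedding of a Veblen multi-hypergraph) uses a fixed multiset of edges of $\mathcal{H}$, so its contribution acquires exactly the monomial $\mathbf{e}^{X}$ of its image infragraph $X$, while all purely structural constants ($C_H$, $|\textup{Aut}(\cdot)|$, $\nu_H$) depend only on $H$ and are untouched. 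One could alternatively argue by specialization, since both sides times $t^{N}$ are integer polynomials in $t,e_1,\dots,e_r$ and it suffices to check equality at every $e_i\mapsto\mu_i\in\Z_{\geq 0}$, where $\widehat{\mathbb{A}}_\mathcal{H}$ becomes the adjacency hypermatrix of the multi-hypergraph with $e_i$ of multiplicity $\mu_i$; but this requires the appropriate multi-hypergraph formulation of Harary--Sachs, so the direct transcription of \cite{cc1} is the cleaner route.

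Granting the edge-variable Harary--Sachs formula, the conclusion follows exactly as in the proof of \Cref{Kocay_chi}: apply Kocay's Lemma (\Cref{kocay}) to reorganize $\widehat{\ev{\#H\subseteq\mathcal{H}}}$ as a sum over infragraphs, using that every partition $S\in\mathcal{S}\ev{H,X}$ decomposes the \emph{same} edge-set $E\ev{X}$ so the weight $\mathbf{e}^{X}$ factors out of the inner sum, obtaining
\[
\widehat{\ev{\#H\subseteq\mathcal{H}}}=\sum_{\substack{X\in\textup{Inf}_d\ev{\mathcal{H}}\\ H\vdash X}}\mathbf{e}^{X}\sum_{\mathbf{S}\in\widetilde{\mathcal{S}}\ev{H,X}}\frac{1}{\alpha_{\mathbf{S}}};
\]
then summing over $H$, interchanging the order of summation as in \Cref{Kocay_chi}, and using $\widetilde{\mathcal{S}}\ev{X}=\bigsqcup_{H}\widetilde{\mathcal{S}}\ev{H,X}$ together with $C_{\mathbf{S}}=C_H$ on $\widetilde{\mathcal{S}}\ev{H,X}$ collapses the $H$-sum into $\Delta\ev{n,X}$, which is the claim. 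The part I expect to require the most care is the edge-variable refinement of \cite{cc1}: one must verify that the bookkeeping of parallel edges there (the classes $\mathbf{e}_j$, the counts $\mathfrak{s}_u^{R}\ev{\mathbf{e}_j}$, and the factor $\nu_H$ appearing in Kocay's Lemma) interacts correctly with the monomial $\mathbf{e}^{X}$ and neither over- nor under-counts edge multiplicities; everything else is a direct transcription of the already-established arguments.
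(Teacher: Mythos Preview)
Your proposal is correct and follows essentially the same route as the paper. The paper's proof is precisely the ``direct transcription of \cite{cc1}'' you outline: it recomputes the generalized trace $\operatorname{Tr}_d(\mathcal{H})$ with edge-variables via the Morozov--Shakirov differential operator formula (\cite{morozov}, \cite[Lemma 10]{cc1}), obtaining $\operatorname{Tr}_d(\mathcal{H})=d(k-1)^n\sum_{X\in\textup{Inf}^1_d(\mathcal{H})}\mathbf{e}^X C_X$, and then exponentiates via \cite[Theorem 14]{cc1} together with Kocay's Lemma to reach the $\Delta(n,X)$ form directly---so your intermediate step (the edge-weighted $\widehat{(\#H\subseteq\mathcal{H})}$ version of Harary--Sachs) is implicit rather than stated separately, but the substance is identical.
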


\begin{proof}
Let $\widehat{\mathbb{A}}_{\mathcal{H}}  = \ev{ a_{j_1,\ldots,j_k} }$ be the edge-variable adjacency hypermatrix of $\mathcal{H}$. Let $f = \nabla \widehat{F}_\mathcal{H} = [  f_1 \cdots  f_n ]^{T}$ be the vector with entries $f_i = \sum_{j_2, j_3, \ldots, j_k=1}^n a_{i j_2 j_3 \ldots j_k} x_{j_2} \ldots x_{j_k}$ for $i=1,\ldots,n$.

Let $A$ be an auxiliary matrix with distinct variables $A_{ij}$ as entries. For each $i=1,\ldots,n$, we define a differential operator $\partial {f}_i$ by replacing $x_j$ in $f_i$ with $\partial / \partial A_{ij} $, for $j=1,\ldots,n$ and thereby obtain $\partial {f} =  [\partial f_1 \cdots \partial f_n ]^{T}$ (cf. the example below).

By \cite{morozov} and \cite[Lemma 10]{cc1}, we have a formula for the generalized trace: 
$$ \operatorname{Tr}_d(\mathcal{H})=(k-1)^{n-1} \sum_{d_1+\cdots+d_n=d}\left(\prod_{i=1}^n \frac{\partial {f}_i^{d_i}}{\left(d_i(k-1)\right) !} \operatorname{tr}\left(A^{d(k-1)}\right)\right) $$

The rest of the proof is almost identical to that of the characteristic polynomial without edge-variables: For any differential operator $\hat{g} $, we have $\hat{g} \operatorname{tr} A^d(k-1) \neq 0 $  if and only if $\hat{g} = \partial {f}_{D_R}$, the operator of a rooted digraph $D_R$, for some rooting $R\in \mathfrak{R}\ev{\mathcal{H}}$, of a unique connected infragraph $X$ of $\mathcal{H}$, in which case, $\hat{g} \operatorname{tr} A^d(k-1) = |\mathfrak{C}\ev{D_R}| \cdot |E\ev{D_R}|$. So, the equation turns into: 
$$ \operatorname{Tr}_d(\mathcal{H})=d(k-1)^{n} \sum_{X\in \textup{Inf}^{1}_d\ev{\mathcal{H}}}   \mathbf{e}^X  \sum_{R \in \mathfrak{R}\ev{X}} \ffrac{ |\mathfrak{C}\ev{D_R}|}{\prod_{v\in D_R } \deg^{-}\ev{v}! } = d(k-1)^{n} \sum_{X\in \textup{Inf}^{1}_d\ev{\mathcal{H}}}   \mathbf{e}^X  C_X $$
We exponentiate and obtain,
\begin{align*}
& [t^{N-d}]\ev{\phi_\mathcal{H}\ev{t}} = \sum_{m=1}^d \sum_{d_1+\cdots+d_m=d} \frac{1}{m !} \prod_{i=1}^m \frac{-\operatorname{Tr}_{d_i}(\mathcal{H})}{d_i}  \\
& = \sum_{X\in \textup{Inf}_d\ev{\mathcal{H}} }  \mathbf{e}^X  \sum_{ \mathbf{S} \in \widetilde{\mathcal{S}} \ev{X}}  \ev{ -\ev{k-1}^{n}}^{c\ev{\mathbf{S}}} \dfrac{C_\mathbf{S}}{\alpha_\mathbf{S} }  && \text{ by \cite[Theorem 14]{cc1} and Kocay's Lemma (\Cref{kocay}) } \\
&= \sum_{X\in \textup{Inf}_d\ev{\mathcal{H}} }  \mathbf{e}^X  \Delta\ev{n,X} 
\end{align*}
\end{proof}
\begin{example}
As an example, consider:
$\mathcal{H} = \ev{ V\ev{\mathcal{H}} , E\ev{ \mathcal{H} } } : = \ev{ [4] , \{e_1 = (123), e_2 = (124)\}}$ 
In this example, we have 
$$\widehat{F}_\mathcal{H} = 3 e_1 x_1 x_2 x_3 + 3 e_2 x_1 x_2 x_4 \qquad  f = \nabla \widehat{F}_\mathcal{H} = \begin{bmatrix} 3 e_1 x_2 x_3 + 3 e_2 x_2 x_4 \\  3e_1 x_1 x_3 + 3e_2 x_1 x_4 \\  3e_1 x_1 x_2 \\  3e_2 x_1 x_2 \end{bmatrix} $$
Introduce the notation, $\partial {S}_{i}\ev{\{j_1,\ldots,j_k\}} = \ffrac{\partial}{\partial A_{ij_1}} \cdots \ffrac{\partial}{\partial A_{ij_k}}$. Then, the vector $\partial {f}$ in our example is the following:
$$ \partial {f} = \begin{bmatrix} e_1 \ffrac{\partial}{\partial A_{12}} \ffrac{\partial}{\partial A_{13}} + e_2 \ffrac{\partial}{\partial A_{12}} \ffrac{\partial}{\partial A_{14}} \\  e_1 \ffrac{\partial}{\partial A_{21}} \ffrac{\partial}{\partial A_{23}} + e_2 \ffrac{\partial}{\partial A_{21}} \ffrac{\partial}{\partial A_{24}} \\  e_1 \ffrac{\partial}{\partial A_{31}} \ffrac{\partial}{\partial A_{32}} \\  e_2 \ffrac{\partial}{\partial A_{41}} \ffrac{\partial}{\partial A_{42}} \end{bmatrix} = \begin{bmatrix} e_1 \partial {S}_{1}\ev{e_1}  + e_2 \partial {S}_{1}\ev{e_2} \\ e_1 \partial {S}_{2}\ev{e_1}  + e_2 \partial {S}_{2}\ev{e_2} \\   e_1 \partial {S}_{3}\ev{e_1}  \\   e_2 \partial {S}_{4}\ev{e_2}  \end{bmatrix}  $$

\end{example}

\subsection*{Spanning Subgraphs and the Characteristic Polynomial with Edge-Variables}
\begin{proposition}
Let a simple hypergraph $$\mathcal{H} = \ev{V\ev{\mathcal{H}}, E\ev{\mathcal{H}} } = \ev{ [n],  \{e_1,\ldots,e_t\} } $$
and a spanning subgraph $\mathcal{G}$ with the same vertex set, 
$$\mathcal{G} = \ev{V\ev{\mathcal{G}}, E\ev{\mathcal{G}} } = \ev{ [n],  \{e_1,\ldots,e_r\} }  \text{ where } r\leq t$$
be given. Then:
$$\widehat{\phi}_\mathcal{G}\ev{t}\ev{ e_1,\ldots,e_r } = \widehat{\phi}_\mathcal{H}\ev{t}\ev{ e_1,\ldots,e_r, 0, \ldots, 0 }$$
\end{proposition}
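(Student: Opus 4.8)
The plan is to reduce everything to the Harary--Sachs Theorem with edge-variables (\Cref{chi_edge_var}) applied separately to $\mathcal{H}$ and to $\mathcal{G}$. The key observation is that $\mathcal{G}$ and $\mathcal{H}$ share the vertex set $[n]$ and the rank $k$, so the degree $N = n\ev{k-1}^{n-1}$ is the same for both characteristic polynomials, and the quantity $\Delta\ev{n,X}$ depends only on the abstract multi-hypergraph $X$ together with $n$ and $k$ --- it makes no reference to which host hypergraph $X$ sits inside. Hence \Cref{chi_edge_var} gives
\[
\widehat{\phi}_\mathcal{H}\ev{t} = \sum_{X\in\textup{Inf}\ev{\mathcal{H}}} \mathbf{e}^X\, t^{-|E\ev{X}|}\,\Delta\ev{n,X}, \qquad \widehat{\phi}_\mathcal{G}\ev{t} = \sum_{X\in\textup{Inf}\ev{\mathcal{G}}} \mathbf{e}^X\, t^{-|E\ev{X}|}\,\Delta\ev{n,X},
\]
with literally the same function $\Delta\ev{n,\cdot}$ occurring in both sums.

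Next I would substitute $e_{r+1}=\cdots=e_t=0$ into the expansion for $\mathcal{H}$. Since $\mathbf{e}^X = e_1^{m_X\ev{e_1}}\cdots e_t^{m_X\ev{e_t}}$, this monomial survives the substitution if and only if $m_X\ev{e_j}=0$ for every $j>r$, i.e.\ if and only if $\underline{X}$ uses only edges among $e_1,\dots,e_r$, which is exactly the condition that $\underline{X}$ is a subgraph of $\mathcal{G}$, i.e.\ $X\in\textup{Inf}\ev{\mathcal{G}}$. Conversely, $\textup{Inf}\ev{\mathcal{G}}\subseteq\textup{Inf}\ev{\mathcal{H}}$ because $\mathcal{G}$ is a spanning subgraph of $\mathcal{H}$, and on any $X\in\textup{Inf}\ev{\mathcal{G}}$ the substitution leaves $\mathbf{e}^X = e_1^{m_X\ev{e_1}}\cdots e_r^{m_X\ev{e_r}}$ unchanged, which is precisely the monomial attached to $X$ in the $\mathcal{G}$-expansion. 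Thus after the substitution the surviving terms are indexed exactly by $\textup{Inf}\ev{\mathcal{G}}$ with identical coefficients $t^{-|E\ev{X}|}\Delta\ev{n,X}$, and comparing the two sums term by term yields $\widehat{\phi}_\mathcal{H}\ev{t}\ev{e_1,\dots,e_r,0,\dots,0} = \widehat{\phi}_\mathcal{G}\ev{t}\ev{e_1,\dots,e_r}$.

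I do not expect a genuine obstacle here; the proof is essentially bookkeeping on top of \Cref{chi_edge_var}. The only points that deserve care are: (i) confirming that $\Delta\ev{n,X}$ and the degree $N$ are intrinsic to $X$, $n$, and $k$, so that matched terms carry the exact same coefficient --- this is immediate from the defining formula $\Delta\ev{n,X}=\sum_{\mathbf{S}\in\widetilde{\mathcal{S}}\ev{X}}\ev{-\ev{k-1}^n}^{c\ev{\mathbf{S}}}C_\mathbf{S}/\alpha_\mathbf{S}$, which involves nothing beyond $X$ itself; and (ii) identifying the set $\{X\in\textup{Inf}\ev{\mathcal{H}}: m_X\ev{e_j}=0\text{ for all }j>r\}$ with $\textup{Inf}\ev{\mathcal{G}}$ as collections of $\approx$-equivalence classes, which follows directly from the definition of an infragraph and of $\approx$, since both only record the vertex set and the edge-multiplicities. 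If the non-normalized form of the statement is wanted, one multiplies both displays by $t^N$ with the common value $N = n\ev{k-1}^{n-1}$.
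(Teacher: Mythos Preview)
Your proposal is correct and follows essentially the same approach as the paper: both apply \Cref{chi_edge_var} to expand $\widehat{\phi}_\mathcal{H}$, substitute $e_{r+1}=\cdots=e_t=0$, observe that only infragraphs $X$ with $m_X(e_j)=0$ for $j>r$ survive, identify these with $\textup{Inf}(\mathcal{G})$, and recognize the result as the $\mathcal{G}$-expansion. Your write-up is somewhat more explicit about why $\Delta(n,X)$ and $N$ are intrinsic to $n$, $k$, and $X$, but the argument is the same.
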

\begin{proof}
By the Harary Sachs Theorem with edge-variables, we have
$$\widehat{\phi}_\mathcal{H} \ev{t} \ev{ e_1, \ldots, e_t } = \sum_{X\in \textup{Inf}\ev{\mathcal{H}} } t^{- |E\ev{X}| } \Delta\ev{|V\ev{\mathcal{H}}|,X}  e_1^{m_X\ev{e_1}} \cdots e_t^{m_X\ev{e_t}}  $$
Therefore,
\begin{align*}
& \widehat{\phi}_\mathcal{H} \ev{t} \ev{ e_1,\ldots,e_r, 0, \ldots, 0 } =  \sum_{X\in \textup{Inf}\ev{\mathcal{H}} }  t^{- |E\ev{X}| } \Delta\ev{|V\ev{\mathcal{H}}|,X}  e_1^{m_X\ev{e_1}} \cdots e_r^{m_X\ev{e_r}} 0^{ m_X\ev{e_{r+1}} } \cdots 0^{m_X\ev{e_t}} \\
& =  \sum_{\substack{X\in \textup{Inf}\ev{\mathcal{H}} \\ m_X\ev{e_i} = 0,\ \forall i\geq r+1 } }   t^{- |E\ev{X}| } \Delta\ev{|V\ev{\mathcal{H}}|,X}  e_1^{m_X\ev{e_1}} \cdots e_r^{m_X\ev{e_r}} e_{r+1}^{ 0 } \cdots e_{t}^{ 0 }  \\
& = \sum_{X\in \textup{Inf}\ev{\mathcal{G}} } t^{- |E\ev{X}| } \Delta\ev{|V\ev{\mathcal{H}}|,X} e_1^{m_X\ev{e_1}} \cdots e_r^{m_X\ev{e_r}}  =   \widehat{\phi}_\mathcal{G}\ev{t}\ev{ e_1,\ldots,e_r } 
\end{align*}
\end{proof}

\begin{remark}
\label{rmk:finitely_many_pieces}
As the characteristic polynomial with edge-variables is a polynomial in the variables $t,e_1,\ldots,e_m$, it follows from \Cref{chi_edge_var} that for each infragraph $X$ with $|E\ev{ X }|> \deg\ev{\widehat{\phi}_{\mathcal{H}}\ev{t}} =  n\ev{k-1}^{n-1}$, we have $\Delta\ev{|V\ev{ \mathcal{H} }|,X} = 0$. 
\end{remark}

Due to this observation, we use the finite set $\widehat{\mathcal{B}}\ev{ \mathcal{H} } = \{ X\in \textup{Inf}^{1}\ev{ \mathcal{H} }: |E\ev{X}| \leq n\ev{k-1}^{n-1} \}$ as the set of pieces (with the concurrence relation $\mathcal{R}$ of sharing a vertex), to express the characteristic polynomial with edge-variables as a sum over trivial heaps. The proof is almost identical to that of \Cref{thm:heaps_hyper}, and omitted here.
\begin{theorem}
Given a simple hypergraph $\mathcal{H}$ with $n$ vertices, then we can express $\widehat{\phi}_{\mathcal{H}} \ev{ t }$ as a sum over trivial heaps:
$$\widehat{\phi}_{\mathcal{H}} \ev{ t } = \sum_{T\in \mathbbm{t}\ev{\widehat{\mathcal{B}}\ev{ \mathcal{H} }}} \ev{-1}^{ |T| }  w_n\ev{T}  $$ 
where $w_n\ev{X} = -\Delta\ev{n,X} t^{- |E\ev{X}|  } \mathbf{e}^X$ is the weight function, for a given piece $X$.
\end{theorem}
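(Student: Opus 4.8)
The plan is to mirror the proof of \Cref{thm:heaps_hyper} essentially verbatim, substituting the edge-variable Harary--Sachs expansion (\Cref{chi_edge_var}) for \Cref{Kocay_chi}, and checking that the extra monomial factor $\mathbf{e}^{X}$ behaves multiplicatively and so does not disturb anything. First I would record the two multiplicativity facts over disjoint unions. If $Y = Y_1 \sqcup \cdots \sqcup Y_r$ is an infragraph of $\mathcal{H}$ written as the disjoint union of its connected components, then the $Y_i$ are pairwise vertex-disjoint, hence pairwise edge-disjoint, so $m_Y(e) = \sum_{i} m_{Y_i}(e)$ for every edge $e$; this gives $\mathbf{e}^{Y} = \prod_{i=1}^{r} \mathbf{e}^{Y_i}$, while $\Delta(n, Y) = \prod_{i=1}^{r} \Delta(n, Y_i)$ follows by iterating \Cref{prop:cover_mult}. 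Consequently, defining the weight of a trivial heap $T = Y_1 \sqcup \cdots \sqcup Y_r$ (a disjoint union of $r$ pairwise non-concurrent, i.e. vertex-disjoint, connected infragraphs) as the product of the weights of its pieces,
\[
w_n(T) = \prod_{i=1}^{r} \bigl(-\Delta(n,Y_i)\, t^{-|E(Y_i)|}\, \mathbf{e}^{Y_i}\bigr) = (-1)^{c(Y)}\, t^{-|E(Y)|}\, \mathbf{e}^{Y}\, \Delta(n,Y),
\]
where $c(Y) = r = |T|$ is the number of components. Hence $(-1)^{|T|} w_n(T) = t^{-|E(Y)|}\,\mathbf{e}^{Y}\,\Delta(n,Y)$, the sign $(-1)^{|T|}\cdot(-1)^{c(Y)}=1$ cancelling exactly as in \Cref{thm:heaps_hyper}.

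Next I would set up the correspondence between $\mathbbm{t}\ev{\widehat{\mathcal{B}}\ev{\mathcal{H}}}$ and infragraphs. A trivial heap labelled by $\widehat{\mathcal{B}}\ev{\mathcal{H}}$ is, by definition, a set of pairwise vertex-disjoint connected infragraphs of $\mathcal{H}$, each with at most $n(k-1)^{n-1}$ edges; taking the disjoint union gives an infragraph of $\mathcal{H}$ all of whose connected components have at most $n(k-1)^{n-1}$ edges, and conversely the component decomposition recovers the heap. By \Cref{rmk:finitely_many_pieces} together with the multiplicativity of $\Delta$ just noted, any infragraph $Y$ possessing a component with more than $n(k-1)^{n-1}$ edges has $\Delta(n, Y) = 0$, so such $Y$ contribute nothing to the Harary--Sachs sum of \Cref{chi_edge_var}. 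Therefore
\[
\sum_{T\in \mathbbm{t}\ev{\widehat{\mathcal{B}}\ev{\mathcal{H}}}} (-1)^{|T|} w_n(T) = \sum_{Y\in \textup{Inf}\ev{\mathcal{H}}} t^{-|E(Y)|}\, \mathbf{e}^{Y}\, \Delta(n, Y) = \widehat{\phi}_{\mathcal{H}}\ev{t},
\]
the last equality being precisely \Cref{chi_edge_var} (with $n = |V\ev{\mathcal{H}}|$).

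I expect the only genuine subtlety — and it is mild — to be the bookkeeping that justifies enlarging the finite piece set $\widehat{\mathcal{B}}\ev{\mathcal{H}}$ to the full set $\textup{Inf}^{1}\ev{\mathcal{H}}$ in the sum without changing its value; this rests entirely on \Cref{rmk:finitely_many_pieces} and the multiplicativity of $\Delta$. Everything else is the identical sign-cancellation used in \Cref{thm:heaps_hyper}, now carrying the harmless extra multiplicative factor $\mathbf{e}^{Y}$, so no further ideas are needed and the routine verifications can be omitted.
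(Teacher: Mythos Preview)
Your proposal is correct and takes exactly the approach the paper intends: the paper explicitly says ``The proof is almost identical to that of \Cref{thm:heaps_hyper}, and omitted here,'' and you have faithfully carried out that mimicry, substituting \Cref{chi_edge_var} for \Cref{Kocay_chi} and verifying that the extra factor $\mathbf{e}^{Y}$ is multiplicative over disjoint unions. Your explicit handling of the passage from the finite piece set $\widehat{\mathcal{B}}\ev{\mathcal{H}}$ to the full sum over $\textup{Inf}\ev{\mathcal{H}}$ via \Cref{rmk:finitely_many_pieces} is a detail the paper leaves implicit, so if anything you have been slightly more careful.
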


\subsection{Heaps of Pieces on the Root of the Characteristic Polynomial}
Let $|V\ev{ \mathcal{H} }|=n$ be the order of a simple hypergraph $\mathcal{H}$. We define,
$$ \overline{\phi}_{\mathcal{H}}\ev{t}:=\widetilde{\phi}_\mathcal{H}\ev{t}^{1/\ev{k-1}^{n}} = \ew{ \dfrac{ \phi_\mathcal{H}\ev{t} }{ t^{n\ev{ k-1 }^{n-1} }} }^{1/\ev{k-1}^{n}} $$

\begin{proposition}
\label{prop:chi_expression}
The coefficient of the term $t^{-d}$ in $ \overline{\phi}_{\mathcal{H}}\ev{t} $ is given by 
$$ \sum_{H \in \mathcal{V}_d^{\infty} } \ev{-1}^{c\ev{H}} C_H \ev{ \# H\subseteq \mathcal{H}} $$
which does not depend on the order $|V\ev{\mathcal{H}}|$ of $\mathcal{H}$.
\end{proposition}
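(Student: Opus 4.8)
The plan is to take $\ev{k-1}^n$-th powers of both sides and reduce to an identity about $\widetilde{\phi}_{\mathcal{H}}\ev{t}$ that follows from the Harary-Sachs Theorem (\Cref{thm:harary_sachs_hyper}). Write $q := \ev{k-1}^n$ and let $\Psi\ev{t} = \sum_{d\geq 0} \psi_d t^{-d}$ be the Laurent series with $\psi_d := \sum_{H\in \mathcal{V}_d^{\infty}} \ev{-1}^{c\ev{H}} C_H \ev{\# H\subseteq \mathcal{H}}$; note $\psi_0 = 1$ (the empty Veblen hypergraph contributes $\ev{-1}^0 C_{\emptyset}\ev{\#\emptyset\subseteq\mathcal{H}} = 1$), so $\Psi\ev{t}$ is invertible as a formal Laurent series in $t^{-1}$ and $\Psi\ev{t}^q$ is well-defined. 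Since $\overline{\phi}_{\mathcal{H}}\ev{t}$ is \emph{defined} as $\widetilde{\phi}_{\mathcal{H}}\ev{t}^{1/q}$, it suffices to show $\Psi\ev{t}^{q} = \widetilde{\phi}_{\mathcal{H}}\ev{t}$ as formal Laurent series, together with the observation that $\widetilde{\phi}_{\mathcal{H}}\ev{t}$ has constant term $1$ so its $q$-th root is the unique Laurent series with constant term $1$ whose $q$-th power is $\widetilde{\phi}_{\mathcal{H}}$, which must then coincide with $\Psi\ev{t}$.

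**First** I would record, from \Cref{thm:harary_sachs_hyper}, that
$$
\widetilde{\phi}_{\mathcal{H}}\ev{t} = \sum_{H\in \mathcal{V}^{\infty}} t^{-|E\ev{H}|} \ev{-\ev{k-1}^{n}}^{c\ev{H}} C_H \ev{\# H\subseteq \mathcal{H}}.
$$
The key combinatorial fact is that a disconnected Veblen multi-hypergraph $H\in \mathcal{V}^{\infty}$ decomposes uniquely as an unordered disjoint union of its connected components, each of which is a connected Veblen hypergraph, and that $C_H$, $\ev{\#\,\cdot\,\subseteq\mathcal{H}}$, $t^{-|E(\cdot)|}$, and $c(\cdot)$ are all multiplicative/additive over components in the appropriate sense — with the subtlety that $\ev{\# H\subseteq\mathcal{H}}$ carries the factor $1/\nu_H$ accounting for repeated isomorphism types. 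The standard exponential-formula bookkeeping then gives
$$
\widetilde{\phi}_{\mathcal{H}}\ev{t} = \exp\!\ew{ \sum_{G\in \mathcal{V}^{1}} t^{-|E\ev{G}|}\,\ev{-q}\,C_G\,\ev{\#\,G\subseteq\mathcal{H}} },
$$
where the sign is $\ev{-q}^{c\ev{G}} = -q$ since $c\ev{G}=1$ for connected $G$, and the $1/\nu_H$ factors are exactly what turn the sum over ordered tuples of components into $\exp$ of the sum over connected pieces. Similarly, $\Psi\ev{t} = \sum_{H\in\mathcal{V}^{\infty}} t^{-|E(H)|}\ev{-1}^{c\ev{H}} C_H\ev{\#H\subseteq\mathcal{H}} = \exp\!\ew{\sum_{G\in\mathcal{V}^1} t^{-|E\ev{G}|}\ev{-1}\,C_G\,\ev{\#G\subseteq\mathcal{H}}}$. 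Raising the second identity to the $q$-th power multiplies the exponent by $q$, yielding exactly the exponent of the first identity, so $\Psi\ev{t}^q = \widetilde{\phi}_{\mathcal{H}}\ev{t}$.

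**Then** the ``does not depend on the order'' claim is immediate from the displayed formula for the coefficient $\psi_d$: the quantities $c\ev{H}$, $C_H$, and $\ev{\#H\subseteq\mathcal{H}}$ depend only on the Veblen hypergraphs $H$ with $d$ edges and their embeddings into $\mathcal{H}$, with no reference to $n = |V\ev{\mathcal{H}}|$ — in contrast to $\widetilde{\phi}_{\mathcal{H}}$ itself, whose coefficients carry the factor $\ev{-\ev{k-1}^n}^{c\ev{H}}$. This is precisely the point of passing to the $\ev{k-1}^n$-th root: it cancels the $n$-dependent base of the exponent.

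**The main obstacle** is justifying the exponential-formula manipulation rigorously at the level of formal Laurent series and correctly tracking the multiplicities: one must check that the map sending an ordered tuple $\ev{G_1,\ldots,G_m}$ of connected Veblen hypergraphs (summed with weight $\prod t^{-|E(G_i)|}\ev{-1}C_{G_i}\ev{\#G_i\subseteq\mathcal{H}}\big/ m!$) to its unordered union $H$ reproduces the coefficient $\ev{-1}^{c\ev{H}}C_H\ev{\#H\subseteq\mathcal{H}}$ of $\Psi\ev{t}$ — i.e. that $\frac{1}{m!}\binom{m}{\nu_1,\ldots,\nu_t} = \frac{1}{\nu_H}$, which is exactly the definition of $\nu_H$ recorded before \Cref{kocay}, together with $C_H = \prod_i C_{G_i}$ from the definition of the associated coefficient. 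Once this bookkeeping is in hand, the $q$-th power step is formal and the independence-of-$n$ claim follows by inspection.
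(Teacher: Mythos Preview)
Your proposal is correct and follows essentially the same route as the paper: both arguments pass through the exponential formula, writing $\widetilde{\phi}_{\mathcal{H}}\ev{t} = \exp\bigl(-\ev{k-1}^n\sum_{G\in\mathcal{V}^1} t^{-|E\ev{G}|}C_G\ev{\#G\subseteq\mathcal{H}}\bigr)$ and then dividing the exponent by $\ev{k-1}^n$ to obtain $\overline{\phi}_{\mathcal{H}}\ev{t}$. The paper's proof simply cites \cite[Lemma 13 and Theorem 14]{cc1} for the logarithm and exponentiation steps, whereas you reconstruct the exponential-formula bookkeeping from \Cref{thm:harary_sachs_hyper} and the multiplicativity of $C_H$ and $\ev{\#H\subseteq\mathcal{H}}$ directly.
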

\begin{proof}
We start with the following equation from \cite[Lemma 13, p.~11]{cc1}:
$$\log\ev{ \widetilde{\phi}_\mathcal{H}\ev{t} } = -\ev{k-1}^{n} \sum_{d\geq 0 }\sum_{X\in \textup{Inf}^{1}_d\ev{\mathcal{H}}} t^{-d} C_X$$
Dividing by $\ev{k-1}^{n}$,
$$ \log\ev{  \overline{\phi}\ev{t} }  =  \log\ew{ \ev{ \widetilde{\phi}_\mathcal{H}\ev{t} }^{1/\ev{k-1}^{n}} }  =  \dfrac{1}{\ev{k-1}^{n}}\log\ev{ \widetilde{\phi}_\mathcal{H}\ev{t} }  = -\sum_{d\geq 0 }\sum_{X\in \textup{Inf}_d\ev{H}} t^{-d} C_X$$
Taking exponentials,
$$  \overline{\phi}\ev{t} = \exp\ew{-\sum_{d\geq 0 }\sum_{X\in \textup{Inf}_d\ev{H}} t^{-d} C_X }$$
By Theorem \cite[Theorem 14]{cc1}, this implies,
$$  \overline{\phi}_{\mathcal{H}}\ev{t} =  \sum_{d\geq 0 } t^{-d} \sum_{H \in \mathcal{V}_d^{\infty} } \ev{-1}^{c\ev{H}} C_H \ev{ \# H\subseteq \mathcal{H}} = \sum_{H \in \mathcal{V}^{\infty} } t^{- |E\ev{H}| } \ev{-1}^{c\ev{H}} C_H \ev{ \# H\subseteq \mathcal{H}} $$
\end{proof}

We apply Kocay's Lemma (\Cref{kocay}) and get an alternative formula for the coefficient of $t^{-d}$ in $\overline{\phi}_\mathcal{\mathcal{H}}\ev{t}$: 
\begin{corollary}
The coefficient of the term $t^{-d}$ in $ \overline{\phi}\ev{t}$ is given by 
$$ \sum_{X\in \mathrm{Inf}_d\ev{\mathcal{H}}} \sum_{ \mathbf{S} \in \widetilde{\mathcal{S}} \ev{ X }} \ev{ -1 }^{ c\ev{ \mathbf{S} } } \dfrac{1}{\alpha_{\mathbf{S}}} C_{\mathbf{S} } = \sum_{X\in \textup{Inf}_d\ev{H}}  \Delta\ev{0,X}$$
Therefore, $\overline{\phi}_\mathcal{\mathcal{H}}\ev{t} $ is expressed as,
$$  \overline{\phi}_\mathcal{\mathcal{H}}\ev{t} = \sum_{X\in \textup{Inf}\ev{\mathcal{H}}}  t^{-|E\ev{X}| } \Delta\ev{0,X}  $$
\end{corollary}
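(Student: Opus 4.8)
The plan is to mimic the proof of \Cref{Kocay_chi} almost verbatim, replacing the factor $-(k-1)^{n}$ by $-1$ everywhere; equivalently, one runs the same argument ``at $n=0$''. The starting point is \Cref{prop:chi_expression}, which already identifies the coefficient of $t^{-d}$ in $\overline{\phi}_{\mathcal{H}}(t)$ with $\sum_{H\in\mathcal{V}_d^{\infty}}(-1)^{c(H)}C_H\,(\#H\subseteq\mathcal{H})$. So the only work left is to rewrite $(\#H\subseteq\mathcal{H})$ in terms of infragraphs and to rearrange the resulting double sum.

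First I would expand $(\#H\subseteq\mathcal{H})$ using Kocay's Lemma (\Cref{kocay}): dividing the identity there by $\nu_H$ gives $(\#H\subseteq\mathcal{H}) = \sum_{X\in\textup{Inf}(\mathcal{H})}\sum_{\mathbf{S}\in\widetilde{\mathcal{S}}(H,X)}\tfrac{1}{\alpha_{\mathbf{S}}}$, where the inner sum is nonempty only when $H\vdash X$. Substituting this into the formula from \Cref{prop:chi_expression} and interchanging the (finite) sums so that $X$ runs first, one obtains
$$[t^{-d}]\,\overline{\phi}_{\mathcal{H}}(t) \;=\; \sum_{X\in\textup{Inf}(\mathcal{H})}\ \sum_{\substack{H\in\mathcal{V}^{\infty}\\ H\vdash X}}(-1)^{c(H)}\sum_{\mathbf{S}\in\widetilde{\mathcal{S}}(H,X)}\frac{C_H}{\alpha_{\mathbf{S}}}.$$
Now I would invoke three facts recorded earlier: (i) $\widetilde{\mathcal{S}}(X)=\bigsqcup_{H\in\mathcal{V}^{\infty}}\widetilde{\mathcal{S}}(H,X)$ is a disjoint union; (ii) $C_{\mathbf{S}}=C_H$ and $c(\mathbf{S})=c(H)$ for every $\mathbf{S}\in\widetilde{\mathcal{S}}(H,X)$; and (iii) $H\vdash X$ forces $|E(H)|=|E(X)|$, so only infragraphs $X$ with $|E(X)|=d$ contribute. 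Collapsing the two inner sums into a single sum over $\mathbf{S}\in\widetilde{\mathcal{S}}(X)$ then yields $[t^{-d}]\,\overline{\phi}_{\mathcal{H}}(t)=\sum_{X\in\textup{Inf}_d(\mathcal{H})}\sum_{\mathbf{S}\in\widetilde{\mathcal{S}}(X)}(-1)^{c(\mathbf{S})}\tfrac{1}{\alpha_{\mathbf{S}}}C_{\mathbf{S}}$, which is exactly $\sum_{X\in\textup{Inf}_d(\mathcal{H})}\Delta(0,X)$ by the definition of $\Delta(0,X)$. Finally, multiplying the $d$-th term by $t^{-d}$ and summing over $d\ge 0$ (so that $t^{-d}=t^{-|E(X)|}$ on each term) gives the displayed closed form $\overline{\phi}_{\mathcal{H}}(t)=\sum_{X\in\textup{Inf}(\mathcal{H})}t^{-|E(X)|}\Delta(0,X)$.

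There is essentially no hard step here: the content lives entirely in \Cref{prop:chi_expression} and Kocay's Lemma, and the corollary is pure bookkeeping. The two points that warrant a sentence of care are checking that $c(\mathbf{S})=c(H)$ — i.e.\ that a class $\mathbf{S}\in\widetilde{\mathcal{S}}(H,X)$ genuinely remembers the number $m=c(H)$ of connected Veblen pieces used to partition $X$ — and that restricting to $\textup{Inf}_d(\mathcal{H})$ is legitimate because a partition of $X$ preserves the edge count; both follow straight from the definitions of $\mathcal{S}(P,X)$ and of $\vdash$. One should also note that all sums in sight are finite (by \Cref{rmk:finitely_many_pieces}, or simply because $\mathcal{V}_d^{\infty}$ and $\textup{Inf}_d(\mathcal{H})$ are finite for each $d$), so the interchange of summation is unproblematic.
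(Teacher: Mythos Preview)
Your proposal is correct and follows exactly the approach the paper intends: the paper offers no proof beyond the single sentence ``We apply Kocay's Lemma (\Cref{kocay})'', and your argument is precisely the $n=0$ analogue of the computation in the proof of \Cref{Kocay_chi}, which is what that sentence is pointing to. One small quibble: your parenthetical appeal to \Cref{rmk:finitely_many_pieces} is misplaced here, since that remark concerns $\Delta(n,X)$ with $n=|V(\mathcal{H})|$ and does not apply to $\Delta(0,X)$ (indeed the paper notes just after \Cref{root_heaps_pieces} that infinitely many pieces have nonzero weight $w_0$); your alternative justification---that $\mathcal{V}_d^{\infty}$ and $\textup{Inf}_d(\mathcal{H})$ are finite for each fixed $d$---is the right one.
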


The Heaps of Pieces framework applies to the function $\overline{\phi}_\mathcal{\mathcal{H}}\ev{t}$. Again, the proof is almost identical to that of \Cref{thm:heaps_hyper}, and omitted here. We take the set of pieces as the infinite set of connected infragraphs: $ \overline{\mathcal{B}}\ev{ \mathcal{H} }:= \textup{Inf}^{1}\ev{\mathcal{H}} $. As before, two pieces $X_1$ and $X_2$ are \textit{concurrent} (denoted $X_1 \mathcal{R} X_2$) if and only if they share a vertex.

\begin{theorem}
\label{root_heaps_pieces}
Given a simple hypergraph $\mathcal{H}$, the power series $\overline{\phi}_\mathcal{H}\ev{t}$ can be expressed as a formal sum over trivial heaps:
$$\overline{\phi}_\mathcal{H}\ev{t} =  \sum_{T\in \mathbbm{t}\ev{ \overline{\mathcal{B}}\ev{ \mathcal{H} }} } \ev{-1}^{ |T| } w_0\ev{T} $$ 
where $w_0\ev{X} = -\Delta\ev{0,X} t^{- |E\ev{X}|  }$ is the weight function, for a given piece $X \in \overline{\mathcal{B}}\ev{ \mathcal{H} }$.
\end{theorem}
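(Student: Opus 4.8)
The strategy is to run the argument of \Cref{thm:heaps_hyper} with the parameter $n$ specialized to $0$, feeding in the expansion of $\overline{\phi}_\mathcal{H}\ev{t}$ from the corollary immediately preceding the statement, namely $\overline{\phi}_\mathcal{H}\ev{t} = \sum_{X\in \textup{Inf}\ev{\mathcal{H}}} t^{-|E\ev{X}|}\Delta\ev{0,X}$, in place of \Cref{Kocay_chi}. First I would record the bijection between $\textup{Inf}\ev{\mathcal{H}}$ and the set $\mathbbm{t}\ev{\overline{\mathcal{B}}\ev{\mathcal{H}}}$ of trivial heaps: since the concurrence relation $\mathcal{R}$ is ``share a vertex'', a trivial heap is exactly an unordered collection of pairwise vertex-disjoint connected infragraphs of $\mathcal{H}$, and the map sending such a collection to its disjoint union is a bijection onto $\textup{Inf}\ev{\mathcal{H}}$, with inverse given by passing to connected components. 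Under this correspondence the empty heap matches the empty infragraph (for which $\Delta\ev{0,\emptyset}=1$ gives the constant term $1$ on both sides), and for a trivial heap $T$ corresponding to an infragraph $X$ we have $|T| = c\ev{X}$, the number of components, as well as $|E\ev{T}| = |E\ev{X}|$.

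Next I would compute the weight of a trivial heap. By definition $w_0$ is extended from pieces to heaps multiplicatively, so if $T = X_1 \sqcup \cdots \sqcup X_r$ then $w_0\ev{T} = \prod_{i=1}^{r} \ew{-\Delta\ev{0,X_i}\, t^{-|E\ev{X_i}|}} = \ev{-1}^{r}\, t^{-|E\ev{T}|}\prod_{i=1}^{r}\Delta\ev{0,X_i}$. Applying \Cref{prop:cover_mult} with $n=0$ repeatedly yields $\prod_{i=1}^{r}\Delta\ev{0,X_i} = \Delta\ev{0,X_1\sqcup\cdots\sqcup X_r} = \Delta\ev{0,T}$, hence $\ev{-1}^{|T|} w_0\ev{T} = \ev{-1}^{2r}\, t^{-|E\ev{T}|}\Delta\ev{0,T} = t^{-|E\ev{T}|}\Delta\ev{0,T}$. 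Summing over all trivial heaps and invoking the bijection above together with the corollary preceding the statement gives $\sum_{T\in\mathbbm{t}\ev{\overline{\mathcal{B}}\ev{\mathcal{H}}}}\ev{-1}^{|T|}w_0\ev{T} = \sum_{X\in\textup{Inf}\ev{\mathcal{H}}} t^{-|E\ev{X}|}\Delta\ev{0,X} = \overline{\phi}_\mathcal{H}\ev{t}$, which is the claim.

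Finally I would note the formal bookkeeping: $\overline{\phi}_\mathcal{H}\ev{t}$ is an infinite power series in $t^{-1}$ and $\overline{\mathcal{B}}\ev{\mathcal{H}} = \textup{Inf}^{1}\ev{\mathcal{H}}$ is infinite, so the sum over trivial heaps is read in the $t^{-1}$-adic topology; for each fixed $d$ only finitely many infragraphs of $\mathcal{H}$ have exactly $d$ edges, so the coefficient of $t^{-d}$ on the right-hand side is a finite sum and the identity holds coefficientwise. I do not anticipate a genuine obstacle: all the combinatorial content is already packaged in \Cref{prop:cover_mult} (multiplicativity of $\Delta\ev{0,-}$ over disjoint unions) and in the Harary--Sachs-based expansion of $\overline{\phi}_\mathcal{H}\ev{t}$. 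The only point requiring care is the sign bookkeeping, i.e.\ checking that the two factors of $\ev{-1}^{r}$ — one coming from the per-piece minus sign in $w_0$, one from the $\ev{-1}^{|T|}$ in the summand — cancel so that the multiplicative extension of $w_0$ reproduces $\ev{-1}^{c\ev{X}}t^{-|E\ev{X}|}\Delta\ev{0,X}$ exactly.
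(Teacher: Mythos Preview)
Your proposal is correct and follows essentially the same route the paper indicates (the proof is omitted there, with the remark that it is ``almost identical to that of \Cref{thm:heaps_hyper}''): compute the multiplicative weight of a trivial heap via \Cref{prop:cover_mult} specialized to $n=0$, identify trivial heaps with infragraphs via their connected components, and replace the input \Cref{Kocay_chi} by the corollary $\overline{\phi}_\mathcal{H}\ev{t} = \sum_{X\in \textup{Inf}\ev{\mathcal{H}}} t^{-|E\ev{X}|}\Delta\ev{0,X}$. Your added remark on $t^{-1}$-adic convergence is a welcome clarification, since here the piece set is infinite and one cannot invoke \Cref{rmk:finitely_many_pieces}.
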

%
%
The weight function $w_0$ for $\overline{\phi}_\mathcal{H} \ev{ t }$ is uniform among different hypergraphs $\mathcal{H}$, as it does not depend on the order $n= |V\ev{\mathcal{H}}|$. However, there are infinitely many pieces with non-zero weight. 

\section{Calculation of the Weight of an Infragraph in a Special Case}
\label{sec:special_cases}
In this section, we include a specific result on the calculation of the weight of a Veblen hypergraph $X$, provided it possesses a certain structure described below. First, we have a result about the associated coefficient. 
\begin{lemma}
\label{lem:non_multiplicative}
Let $X = X_1 \cup X_2$ be a Veblen hypergraph, where $X_1$ and $X_2$ are connected Veblen hypergraphs with $V\ev{X_1} \cap V\ev{X_2} = \{u\}$.
Then, we have,
$$ C_X = \ev{k-1} \dfrac{ \mathfrak{s}_u\ev{X_1} \mathfrak{s}_u\ev{X_2}  }{ \mathfrak{s}_u\ev{X} }\binom{   \mathfrak{s}_u\ev{X}  }{  \mathfrak{s}_u\ev{X_1}  , \mathfrak{s}_u\ev{X_2} }  C_{X_1}  C_{X_2}  $$
where $\mathfrak{s}_u\ev{X} = \frac{ \deg_{X}\ev{u} }{ k }$ is the number of times $u$ is chosen as the root, in a rooting $R \in \mathfrak{R}\ev{X}$ of $X$. 
\end{lemma}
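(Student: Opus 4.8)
The plan is to compute $C_X$ directly from the definition
$$C_X = \sum_{R \in \mathfrak{R}\ev{X}} \dfrac{ | \mathfrak{C}\ev{D_R} | }{ \prod_{v} \deg^{-}_{D_R}\ev{v}! }$$
by organizing the rootings of $X = X_1 \cup X_2$ in terms of the rootings of $X_1$ and $X_2$ and exploiting the fact that $V\ev{X_1} \cap V\ev{X_2} = \{u\}$. First I would observe that, since the edge sets of $X_1$ and $X_2$ are disjoint and their vertex sets meet only at $u$, a rooting $R$ of $X$ restricts to a rooting $R_1$ of $X_1$ and a rooting $R_2$ of $X_2$: every star $S_w(e)$ with $e \in E\ev{X_i}$ has root $w \in \varphi(e) \subseteq V\ev{X_i}$, so it belongs to $X_i$'s rooting. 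Conversely, given $(R_1,R_2)$, the stars can be interleaved into a valid rooting of $X$; the only interleaving freedom that matters is at the vertex $u$, where $\mathfrak{s}_u\ev{X_1}$ stars from $X_1$ and $\mathfrak{s}_u\ev{X_2}$ stars from $X_2$ must be arranged among the $\mathfrak{s}_u\ev{X} = \mathfrak{s}_u\ev{X_1} + \mathfrak{s}_u\ev{X_2}$ slots rooted at $u$ (using $\deg_X\ev{u} = \deg_{X_1}\ev{u} + \deg_{X_2}\ev{u}$ and \Cref{rmk:previous_work}). At every other vertex the stars come from exactly one of $X_1$, $X_2$, so there is no choice. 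This suggests passing to the equivalence classes $\overline{\mathfrak{R}}$: a class $\mathbf{R} \in \overline{\mathfrak{R}}\ev{X}$ corresponds bijectively to a pair $(\mathbf{R}_1, \mathbf{R}_2)$, and $D_\mathbf{R}$ is the one-point union (wedge at $u$) of $D_{\mathbf{R}_1}$ and $D_{\mathbf{R}_2}$.

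Next I would compute the two factors appearing in $C_X$ for such a wedge digraph. For the Eulerian circuits: since $D_\mathbf{R}$ is $D_{\mathbf{R}_1}$ and $D_{\mathbf{R}_2}$ glued at the single vertex $u$, an Eulerian circuit of $D_\mathbf{R}$ is obtained by choosing an Eulerian circuit of each piece and interleaving them at $u$; the number of ways to interleave, given that $u$ is visited $\deg^{-}_{D_{\mathbf{R}_1}}\ev{u}$ times in the first and $\deg^{-}_{D_{\mathbf{R}_2}}\ev{u}$ times in the second, is a multinomial coefficient. Concretely I expect
$$|\mathfrak{C}\ev{D_\mathbf{R}}| = \binom{\deg^{-}_{D_{\mathbf{R}_1}}\ev{u} + \deg^{-}_{D_{\mathbf{R}_2}}\ev{u} - 1}{\deg^{-}_{D_{\mathbf{R}_1}}\ev{u},\ \deg^{-}_{D_{\mathbf{R}_2}}\ev{u}} \cdot \deg^{-}_{D_{\mathbf{R}_1}}\ev{u} \cdot \deg^{-}_{D_{\mathbf{R}_2}}\ev{u}\cdot\frac{|\mathfrak{C}\ev{D_{\mathbf{R}_1}}|\,|\mathfrak{C}\ev{D_{\mathbf{R}_2}}|}{(\text{normalization})},$$
but it is cleaner to argue via the B.E.S.T.\ theorem form $|\mathfrak{C}\ev{D}| = \tau_D \prod_v (\deg^-\ev{v}-1)!$ already quoted in the excerpt: arborescences of a wedge rooted at $u$ factor as $\tau_{D_\mathbf{R}} = \tau_{D_{\mathbf{R}_1}} \tau_{D_{\mathbf{R}_2}}$ (an in-tree to $u$ in the wedge is just an in-tree in each piece), and $\deg^{-}_{D_\mathbf{R}}\ev{u} = \deg^{-}_{D_{\mathbf{R}_1}}\ev{u} + \deg^{-}_{D_{\mathbf{R}_2}}\ev{u}$. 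Using $\deg^{-}_{D_R}\ev{u} = (k-1)\mathfrak{s}_u$ from \Cref{rmk:previous_work}, the factorials in $C_X$'s denominator at the shared vertex $u$ split as $\frac{1}{((k-1)\mathfrak{s}_u\ev{X})!}$ against $\frac{1}{((k-1)\mathfrak{s}_u\ev{X_1})!}\frac{1}{((k-1)\mathfrak{s}_u\ev{X_2})!}$, and all other vertices contribute identically on both sides.

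Then I would assemble: write $C_X$ as a sum over $\mathbf{R} = (\mathbf{R}_1,\mathbf{R}_2)$ with $|\mathbf{R}| = \binom{\mathfrak{s}_u\ev{X}}{\mathfrak{s}_u\ev{X_1},\mathfrak{s}_u\ev{X_2}} |\mathbf{R}_1||\mathbf{R}_2|$ (from \Cref{rmk:previous_work}, since $\Gamma$ also factors — the only overlap in the product defining $\Gamma$ is at $u$), substitute the arborescence/degree factorizations, and collect. The scalar prefactor that survives should be exactly
$$\dfrac{\big((k-1)\mathfrak{s}_u\ev{X_1}\big)!\,\big((k-1)\mathfrak{s}_u\ev{X_2}\big)!}{\big((k-1)\mathfrak{s}_u\ev{X}\big)!}\cdot\binom{\mathfrak{s}_u\ev{X}}{\mathfrak{s}_u\ev{X_1},\mathfrak{s}_u\ev{X_2}}\cdot(\text{Eulerian-interleaving count}),$$
and a careful bookkeeping should reduce this to the claimed $(k-1)\,\frac{\mathfrak{s}_u\ev{X_1}\mathfrak{s}_u\ev{X_2}}{\mathfrak{s}_u\ev{X}}\binom{\mathfrak{s}_u\ev{X}}{\mathfrak{s}_u\ev{X_1},\mathfrak{s}_u\ev{X_2}}$. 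I expect the main obstacle to be precisely this last reconciliation of factorials: tracking how the multinomial interleaving count of Eulerian circuits at $u$, the arborescence split, and the $((k-1)\mathfrak{s}_u)!$ denominators combine — in particular verifying that the $(k-1)$-fold "thickening" of each star at $u$ produces exactly one surviving factor of $(k-1)$ and not a higher power. A safe way to control this is to first prove the identity in the rank-$2$ case (where $\mathfrak{s}_u = \deg_X\ev{u}/2$ and the digraph picture is transparent) as a sanity check, then verify the general-$k$ bookkeeping reduces to it after substituting $\deg^{-} = (k-1)\mathfrak{s}$ uniformly.
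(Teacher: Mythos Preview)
Your approach is correct and matches the paper's proof: the bijection $\overline{\mathfrak{R}}\ev{X_1}\times\overline{\mathfrak{R}}\ev{X_2}\to\overline{\mathfrak{R}}\ev{X}$, the size formula $|\mathbf{R}_1\oplus\mathbf{R}_2|=\binom{\mathfrak{s}_u\ev{X}}{\mathfrak{s}_u\ev{X_1},\mathfrak{s}_u\ev{X_2}}|\mathbf{R}_1||\mathbf{R}_2|$, and the arborescence factorization $\tau_{D_\mathbf{R}}=\tau_{D_{\mathbf{R}_1}}\tau_{D_{\mathbf{R}_2}}$ at the wedge vertex are exactly the ingredients the paper uses.

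The ``main obstacle'' you anticipate is not real, and the paper sidesteps it by working from the outset with the arborescence form
\[
C_G=\sum_{R\in\mathfrak{R}\ev{G}}\dfrac{\tau_{D_R}}{\prod_{v}\deg^{-}_{D_R}\ev{v}}
\]
rather than the Eulerian-circuit form. Since B.E.S.T.\ gives $\dfrac{|\mathfrak{C}\ev{D}|}{\prod_v \deg^{-}\ev{v}!}=\dfrac{\tau_D}{\prod_v \deg^{-}\ev{v}}$, all the factorials you are worried about cancel before any comparison between $X$ and $X_1,X_2$ is needed. In this linear-degree form, the only discrepancy at $u$ is the single factor
\[
\dfrac{\deg^{-}_{D_{\mathbf{R}_1}}\ev{u}\,\deg^{-}_{D_{\mathbf{R}_2}}\ev{u}}{\deg^{-}_{D_{\mathbf{R}}}\ev{u}}
=\dfrac{(k-1)\mathfrak{s}_u\ev{X_1}\cdot(k-1)\mathfrak{s}_u\ev{X_2}}{(k-1)\mathfrak{s}_u\ev{X}}
=(k-1)\,\dfrac{\mathfrak{s}_u\ev{X_1}\mathfrak{s}_u\ev{X_2}}{\mathfrak{s}_u\ev{X}},
\]
which immediately produces the claimed prefactor with no further bookkeeping. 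Your ``Eulerian-interleaving count'' and the ratio $\frac{((k-1)\mathfrak{s}_u\ev{X_1})!\,((k-1)\mathfrak{s}_u\ev{X_2})!}{((k-1)\mathfrak{s}_u\ev{X})!}$ are therefore detours; drop them and the proof is one line after the bijection is set up.
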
	
See \hyperlink{proof:nonmultiplicativepf}{proof} on p.~\pageref*{proof:non_multiplicative_pf_page}. 

If the intersection of two Veblen hypergraps $X_1$ and $X_2$ of rank $k\geq 2$ is a singleton $\{u\}$ with $\deg_{X_1}\ev{u} = \deg_{X_2}\ev{u} = k$, then we can express the total weight of $X_1\cup X_2$ in terms of subweights of $X_1$ and $X_2$:

\begin{proposition}
\label{prop:intersecting_infras}
Let $\mathcal{H}$ be a simple hypergraph. Let $X:= X_1 \cup X_2$ be an infragraph of $\mathcal{H}$ such that $V\ev{ X_1 }\cap V\ev{ X_2 } = \{ u \}$. If $\deg_{X_1}\ev{ u } = \deg_{X_2}\ev{ u } = k$, then, for each $n\geq 0$, we have 
$$ w_n\ev{ X } =  \ew{ \ev{ k-1}^{ 1 - n }  - 1  } \cdot w_n\ev{ X_1 }\cdot w_n\ev{ X_2 } $$
In particular,
$$ w_0\ev{ X } = w_0\ev{ X_1 }\cdot w_0\ev{ X_2 } $$
\end{proposition}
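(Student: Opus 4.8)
The plan is to combine \Cref{lem:non_multiplicative} with the defining equation $w_n(X) = -\Delta(n,X)\,t^{-|E(X)|}$ and the structure of $\widetilde{\mathcal{S}}(X)$ when $X = X_1 \cup X_2$ meets in a single vertex $u$ of degree $k$ in each part. First I would unwind $\Delta(n,X) = \sum_{\mathbf{S}\in\widetilde{\mathcal{S}}(X)} (-(k-1)^n)^{c(\mathbf{S})} \alpha_\mathbf{S}^{-1} C_\mathbf{S}$. Since $\deg_{X_i}(u) = k$, we have $\mathfrak{s}_u(X_i) = 1$ for $i=1,2$ and $\mathfrak{s}_u(X) = 2$, so the binomial-and-multinomial prefactor in \Cref{lem:non_multiplicative} collapses: $C_X = (k-1)\cdot \tfrac{1\cdot 1}{2}\binom{2}{1,1} C_{X_1} C_{X_2} = (k-1) C_{X_1} C_{X_2}$. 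More generally, the same collapse applies to any connected Veblen hypergraph $Y$ with $Y \vdash X$: such a $Y$ either has $X_1$ and $X_2$ in separate components, or glues them at $u$, and in the latter case $C_Y = (k-1) C_{Y\cap X_1} C_{Y \cap X_2}$.

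The key combinatorial step is to set up a bijection that organizes $\widetilde{\mathcal{S}}(X)$ according to how each member interacts with the cut vertex $u$. Every $\mathbf{S} \in \widetilde{\mathcal{S}}(X)$ induces, by restricting each block to $E(X_1)$ and to $E(X_2)$, a pair $(\mathbf{S}_1, \mathbf{S}_2) \in \widetilde{\mathcal{S}}(X_1)\times\widetilde{\mathcal{S}}(X_2)$ together with a bit recording whether the block containing $u$'s incident edges in $X_1$ is the same block as the one containing $u$'s incident edges in $X_2$ (``glued'') or not (``split''). In the split case, $c(\mathbf{S}) = c(\mathbf{S}_1) + c(\mathbf{S}_2)$, $\alpha_\mathbf{S} = \alpha_{\mathbf{S}_1}\alpha_{\mathbf{S}_2}$, and $C_\mathbf{S} = C_{\mathbf{S}_1}C_{\mathbf{S}_2}$; in the glued case $c(\mathbf{S}) = c(\mathbf{S}_1) + c(\mathbf{S}_2) - 1$, the $\alpha$'s still multiply (edges in $X_1$ and $X_2$ are never parallel since $X$ is an infragraph of a simple $\mathcal H$ with $X_1, X_2$ sharing only $u$), and $C_\mathbf{S} = (k-1) C_{\mathbf{S}_1} C_{\mathbf{S}_2}$ by the collapsed \Cref{lem:non_multiplicative}. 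Crucially, for every pair $(\mathbf{S}_1,\mathbf{S}_2)$ there is exactly one split preimage and exactly one glued preimage, so the sum factors.

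Carrying this out, I would write
\[
\Delta(n,X) = \sum_{\mathbf{S}_1,\mathbf{S}_2} \Bigl[ (-(k-1)^n)^{c(\mathbf{S}_1)+c(\mathbf{S}_2)} \tfrac{C_{\mathbf{S}_1}C_{\mathbf{S}_2}}{\alpha_{\mathbf{S}_1}\alpha_{\mathbf{S}_2}} + (-(k-1)^n)^{c(\mathbf{S}_1)+c(\mathbf{S}_2)-1}\tfrac{(k-1)C_{\mathbf{S}_1}C_{\mathbf{S}_2}}{\alpha_{\mathbf{S}_1}\alpha_{\mathbf{S}_2}}\Bigr],
\]
and the bracket equals $\bigl(1 - (k-1)^{1-n}\bigr)$ times the generic term, so $\Delta(n,X) = \bigl(1-(k-1)^{1-n}\bigr)\Delta(n,X_1)\Delta(n,X_2)$. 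Multiplying by $-t^{-|E(X)|} = -t^{-|E(X_1)|}t^{-|E(X_2)|}$ and using $w_n(X_i) = -\Delta(n,X_i)t^{-|E(X_i)|}$ gives $w_n(X) = -(1-(k-1)^{1-n})\,\cdot\,(-w_n(X_1))(-w_n(X_2))\cdot(-1)$; tracking the signs carefully yields $w_n(X) = \bigl((k-1)^{1-n}-1\bigr)w_n(X_1)w_n(X_2)$, and setting $n=0$ gives the factor $(k-1)-1 \cdot$, wait — at $n=0$ one gets $(k-1)^{1}-1 = k-2$, so I must double-check: actually at $n=0$ the stated conclusion is $w_0(X)=w_0(X_1)w_0(X_2)$, which forces the prefactor to be $1$, i.e. $(k-1)^{1-0}-1$ should read differently — I would recheck the exponent bookkeeping in \Cref{lem:non_multiplicative} (the factor $(k-1)$ there combined with $(-(k-1)^n)^{-1}$ gives $-(k-1)^{1-n}$, and at $n=0$ this is $-(k-1)$, so the bracket is $1-(k-1) = 2-k$, not $1$; hence the sign/normalization in the glued term needs the extra care that the main obstacle below addresses). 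The main obstacle, then, is precisely this sign and normalization reconciliation: making sure the ``glued'' term's contribution of $C_\mathbf{S}$, the factor $(-(k-1)^n)^{c(\mathbf{S})}$ with its reduced component count, and the overall $-$ in the weight all combine to produce exactly $(k-1)^{1-n}-1$ and its specialization $1$ at $n=0$; I would verify this by testing the formula against a small explicit example (two triangles sharing a vertex, $k=3$) using the data already computed in the examples of Section~\ref{subsec:kocay_lemma}.
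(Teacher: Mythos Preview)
Your approach is essentially identical to the paper's: both partition $\widetilde{\mathcal{S}}(X)=\mathcal{D}_1\sqcup\mathcal{D}_2$ according to whether one or two blocks contain $u$, set up the two bijections $\widetilde{\mathcal{S}}(X_1)\times\widetilde{\mathcal{S}}(X_2)\to\mathcal{D}_j$ (your ``glued'' and ``split'' cases are exactly the paper's $\mathbf{S}_1\odot\mathbf{S}_2$ and $\mathbf{S}_1\sqcup\mathbf{S}_2$), invoke \Cref{lem:non_multiplicative} in the glued case where it collapses to $C_{A_m\cup B_m}=(k-1)\,C_{A_m}C_{B_m}$ since $\mathfrak{s}_u(X_i)=1$, and factor. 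Your displayed identity $\Delta(n,X)=\bigl(1-(k-1)^{1-n}\bigr)\Delta(n,X_1)\Delta(n,X_2)$ and the conclusion $w_n(X)=\bigl((k-1)^{1-n}-1\bigr)w_n(X_1)w_n(X_2)$ are exactly what the paper obtains.

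Your sign worries dissolve once you write $w_n(X_i)=-\Delta(n,X_i)t^{-|E(X_i)|}$, so that $w_n(X_1)w_n(X_2)=\Delta(n,X_1)\Delta(n,X_2)\,t^{-|E(X)|}$ with the two minus signs cancelling; then $w_n(X)=-\Delta(n,X)t^{-|E(X)|}$ gives the stated formula in one line. As for your hesitation about $n=0$: your arithmetic is right --- substituting $n=0$ into the main formula gives the prefactor $(k-1)-1=k-2$, not $1$, so the ``In particular'' clause as written only holds when $k=3$. This is an issue with the statement, not with your argument; the general formula you derived is what the paper proves.
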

See \hyperlink{proof:intersectinginfraspf}{proof} on p.~\pageref*{proof:intersecting_infras_pf_page}. 

\section{Future Directions}
\begin{enumerate}
\item We have shown that connected infragraphs of a simple hypergraph of rank $k\geq 3$ can be taken as its pieces. Is there a smaller/alternative set of pieces, from which we can obtain the weight of each infragraph as a product of the weights of some pieces from that set? 
\item The rational function $\frac{\phi_{G-u} \ev{ t } }{\phi_{G} \ev{ t }}$ can be used to show the Eigenvalue Interlacing Theorem (\cite[Theorem 5.3, p.~71]{godsil}). Is there a way to generalize this method to hypergraphs?
\item In \cite[p.~71]{godsil}, the following identity for the characteristic polynomial of a graph $G$ is given:
$$
\phi_{G}'\ev{t}^{2} -  \phi_{G}''\ev{t}  \phi_{G}\ev{t} = \sum_{i,j} \phi^{i,j}_{G}\ev{t}^{2}
$$ 
where $\phi^{i,j}_{G}\ev{t}$ is the $ij$th entry of the adjugate matrix of $t\cdot I - \mathbb{A}_G$. Is there a version of this identity for hypergraphs? 
\end{enumerate}

\section*{Acknowledgements}

This project started at and was made possible by \textit{Spectral Graph and Hypergraph Theory: Connections \& Applications}, December 6-10, 2021, a workshop at the American Institute of Mathematics with support from the US National Science Foundation. 


\bibliographystyle{plain}
\bibliography{bibliography}

\section{Appendix}
\vspace{-0.5cm}
\hypertarget{proof:downsetrecomposed}{\phantom{a}}
\begin{proof}[Proof of \Cref{lem:push_down}:]
\label{proof:downset_recomposed}
First, we show that $\mathcal{M}\ev{ H_2 } = \mathcal{M} \ev{ H } \setminus \{\omega\}$. 
\begin{enumerate}
\item[\textit{i)}] ($\subseteq $): Take a maximal element $x \in \mathcal{M}\ev{ H_2 }$. Suppose, for a contradiction, that $x \notin \mathcal{M} \ev{ H } $, i,e, $x \lessdot y$, for some $y$. Since $x$ is a maximal element of $H_2$, it follows that $y\notin H_2$, i.e., $y \in H_1 = \downset[]{\omega}$. By transitivity, we get $x\leq \omega$, contradicting $x\in H_2 = H\setminus H_1$.
\item[\textit{ii)}] ($\supseteq $): Let $x\in \mathcal{M} \ev{ H } \setminus \{\omega\}$ be chosen. For any $y\in H_2$, if $x\leq y$, then $x=y$, since $y\in H$ and $x$ is maximal in $H$. 
\end{enumerate}
Second, we show that $ H = T$, where $T := H_1\circ H_2 $. The ground sets are clearly equal: $\Omega = \downset[ ]{\omega} \sqcup ( \Omega \setminus \downset[ ]{\omega} )$. Let $x,y\in \Omega$ be fixed. We claim that $x\leq_T y $ if and only if $x\leq_H y$. 
\begin{enumerate}
\item[\textit{i)}] $\ev{\Longrightarrow}$: Assume $x\leq_T y$. Then, there is a sequence $\{z_i\}_{i=0}^{k}$ with $k\geq 0$ such that $x = z_k \lessdot_T  \ldots  \lessdot_T z_0 = y$. 
\begin{enumerate}
\item Assume that $y\in H_1$. We claim that $\{ z_j\}_{j=0}^{k} \subseteq H_1$. We proceed by induction on $j$. For the base case, we have $ j = 0 $, and $z_{0} = y \in H_1 $ holds. For the inductive hypothesis, assume that $z_{j} \in H_1$, for $j\geq 0$. For the inductive step, note that $z_{j+1} \lessdot_T z_{j} $ and $z_j \in H_1$ together imply, by the definition of composition of heaps, that $z_{j+1} \in H_1$, and the claim follows. Now, for any $j= 0,\ldots,k-1$, we have $z_j,z_{j+1} \in H_1$ and $z_{j+1} \lessdot_{T} z_{j}$. Then, by the definition of composition of heaps, we get $z_{j+1} \lessdot_{H_1} z_{j}$, which implies, by the definition of $\leq_{H_1}$, that $z_{j+1} \lessdot_{H} z_{j}$. 
\item Assume that $x\in H_2$. Using a similar argument, we apply reverse induction on $j$ and show that $\{ z_j\}_{j=0}^{k} \subseteq H_2$ and $z_{j} \lessdot_{H} z_{j-1}$, for each $j=k,k-1,\ldots,1$.
\item The only remaining case to consider is $x\in H_1$ and $y\in H_2$. Let $i$ be the maximum index, such that $z_i \in H_2$. In particular, $z_{i+1} \in H_1 $. By Part (a) applied to $x$ and $z_{i+1}$, we obtain $x = z_k \lessdot_H \ldots \lessdot_H z_{i+1}$. By Part (b) applied to $z_i$ and $y$, we obtain  $z_i \lessdot_H \ldots \lessdot_H z_{0} = y $. Finally, we claim that $z_{i+1} \lessdot_H z_i$. Note that $z_{i}$ and $z_{i+1}$ are concurrent, by the definition of $\leq_T$. Since $H$ is a heap, it follows that $z_{i+1} \leq_H z_i$ or $z_{i} \leq_H z_{i+1} $. On the other hand, since $H_1$ is down-closed, $ z_{i+1} \in H_1$ and $z_i \not\in H_1$ together imply $z_{i} \not\leq_H z_{i+1} $. Hence, we obtain $z_{i+1} \leq_H z_i$. 
\end{enumerate}
Therefore, we obtain $x = z_k \lessdot_H \ldots \lessdot_H z_0 = y$, which implies $x\leq_H y$. 
\item[\textit{ii)}] $\ev{\Longleftarrow}$: Assume $x \leq_H y$. Then, there is a sequence $\{z_i\}_{i=0}^{k}$ with $k\geq 0$ such that $x = z_k \lessdot_H  \ldots \lessdot_H z_0 = y$. 
\begin{enumerate}
\item Assume that $y\in H_1$. For any $j\geq 0$, we have $z_j\leq_H y \leq_H \omega$, which implies $z_j \in H_1$. Then, for each $j$, we have $z_{j+1} \lessdot_H z_j$, and so, $z_{j+1} \lessdot_{H_1} z_j$, which implies $z_{j+1} \lessdot_{T} z_j$.
\item Assume that $x\in H_2$. As in the previous part, for any $j$, we have $x \not\leq_H \omega$ and $x \leq_H z_j$, so we infer $z_j \in H_2$. Then, for each $j$, we have $z_{j+1} \lessdot_H z_j$, and so, $z_{j+1} \lessdot_{H_2} z_j$, which implies $z_{j+1} \lessdot_{T} z_j$.
\item The only remaining case to consider is $x\in H_1$ and $y\in H_2$. Let $i$ be the maximum index such that $z_i \in H_2$. In particular, $z_{i+1} \in H_1 $. By Parts (a) and (b), we obtain $x = z_k \lessdot_T \ldots \lessdot_T z_{i+1}$ and $z_i \lessdot_T \ldots \lessdot_T z_{0} = y $. Finally, we claim that $z_{i+1} \lessdot_T z_i$. Since $H$ is a heap, it follows that $z_{i}$ and $z_{i+1}$ are concurrent. So, by the definition of composition of heaps, we obtain $z_{i+1} \lessdot_T z_{i}$. 
\end{enumerate}
Therefore, we obtain $x = z_k \lessdot_T \ldots \lessdot_T z_0 = y$, which implies $x\leq_T y$. 
\end{enumerate}
\end{proof}

\vspace{-0.5cm}
\hypertarget{proof:bijtrailspyramids}{\phantom{a}}
\begin{proof}[Proof of \Cref{cor:bij_trails_pyramids}: ]
\label{proof:bij_trails_pyramids_pf}
Let $X$ be a multi-graph, with a fixed edge $e \in E\ev{X}$. 
There is a natural correspondence between Eulerian trails of $X$ and the Eulerian trails of orientations of $X$. We define inverse functions:  
\[\begin{tikzcd}
\mathcal{W}\ev{X} \arrow[r,shift left=2pt,"\mathcal{J}"] & \arrow[l,shift left=2pt,"\mathcal{L} "]  \bigsqcup_{ D\in \mathcal{O}\ev{ X } } \mathcal{W}\ev{D}
\end{tikzcd}\]
where for each Eulerian trail $\mathbf{w} = \ev{ v_0, e_1, v_1, \ldots, v_{d-1},e_d,v_d} \in \mathcal{W}\ev{X}$, we define $\mathcal{J}\ev{ \mathbf{w} } := \mathbf{w} \in \mathcal{W}\ev{D} \text{ where } D\in \mathcal{O}\ev{X}$ is the unique orientation of $X$ with $V\ev{D} = V\ev{X}$, $E\ev{D} = E\ev{\mathbf{w}}$ and $\psi\ev{e_i} = \ev{ v_{i-1}, v_i }$, for each $i=1,\ldots,d$. Conversely, for each $D\in \mathcal{O}\ev{X}$ and $\mathbf{w}\in \mathcal{W}\ev{ D }$, we have $\mathcal{L}\ev{ \mathbf{w} } = \mathbf{w} \in \mathcal{W}\ev{X}$, i.e., $\mathcal{L}$ forgets that the edges of $\mathbf{w}$ are oriented. 
Furthermore, we have a mapping $\bigsqcup_{ D\in \mathcal{O}\ev{ X } } \mathcal{B}\ev{D} \rightarrow \mathcal{B}\ev{X}$ given by $[\mathbf{w}]_{\rho} \mapsto [\mathbf{w}]_{\rho}$, which further extends to a function $\mathcal{K}: \bigsqcup_{ D\in \mathcal{O}\ev{ X } } \mathbbm{dp}\ev{ D } \rightarrow  \mathbbm{dp}\ev{ X } $. We get a commutative diagram:
$$
\begin{tikzcd}
\bigsqcup_{ D\in \mathcal{O}\ev{X} } \mathcal{W}^{e}\ev{D} \arrow{r}{ g }  & \bigsqcup_{ D\in \mathcal{O}\ev{X} }\mathbbm{dp}^{ e }\ev{ D } \arrow{d}{ \mathcal{K} } \\
\mathcal{W}^{e}\ev{X} \arrow{r}{ h } \arrow{u}{ \mathcal{J} } & \mathbbm{dp}^{ e }\ev{ X } 
\end{tikzcd}
$$
By \Cref{thm:pyramids_containing_edge} and a routine check, the maps $\mathcal{J}$, $\mathcal{K}$ and $g$ are all bijections and $h = \mathcal{K} \circ g \circ \mathcal{J}$ (the diagram commutes) and so, $h$ is a bijection as well. 
\end{proof}

\vspace{-0.5cm}
\hypertarget{proof:bijwalkspyramidspf}{\phantom{a}}
\begin{proof}[Proof of \Cref{cor:bij_walks_pyramids}: ]
\label{proof:bij_walks_pyramids_pf}

Define a mapping $\mathcal{J}$ and its left-inverse $\mathcal{L}$:
\[
\begin{tikzcd}
\mathcal{U}\ev{ G } \arrow[r,shift left=2pt,"\mathcal{J}"] & \arrow[l,shift left=2pt,"\mathcal{L} "]  \bigsqcup_{X \in \textup{Inf}^{1}\ev{G}} \mathcal{W}\ev{X} 
\end{tikzcd}
\]
where for each $\mathbf{w} = \ev{ v_0, e_1, v_1, e_2,\ldots, e_{d-1}, v_{d-1}, e_{d}, v_0 }$, we define $\mathcal{J}\ev{ \mathbf{w} } = \ev{ v_0, 1, v_1, 2,\ldots, d-1, v_{d-1}, d, v_0 } \in \mathcal{W}\ev{X}$, such that $X = \ev{ V, E,\varphi}$ is the infragraph with $V = V\ev{G}$, $E=[d]$ and $\varphi\ev{i} = e_i$, for each $i=1,\ldots,d$. Conversely, given $X \in \textup{Inf}^{1}\ev{G}$ and $\mathbf{w} = \ev{ v_0, e_1, v_1, e_2,\ldots, v_{d-1}, e_{d}, v_0 } \in \mathcal{W}\ev{X}$, then $\mathcal{L}\ev{\mathbf{w}} =  \ev{ v_0, \varphi\ev{ e_1 }, v_1, \varphi\ev{ e_2 },\ldots, v_{d-1}, \varphi\ev{ e_d }, v_0 } $, flattening the edges. It follows that $\text{id} = \mathcal{L}\circ \mathcal{J}$. 
Furthermore, we have a mapping $\bigsqcup_{X \in \textup{Inf}^{1}\ev{G}} \mathcal{B}\ev{X}\rightarrow \widetilde{\mathcal{B}}\ev{G}$ given by $$[\ev{ v_0, e_1, v_1, e_2,\ldots, e_{d-1}, v_{d-1}, e_{d}, v_0 }]_{\rho} \mapsto [\ev{ v_0, \varphi\ev{ e_1 }, v_1, \varphi\ev{ e_2 },\ldots, v_{d-1}, \varphi\ev{ e_d }, v_0 }]$$
which further extends to a function $\mathcal{K}: \bigsqcup_{X \in \textup{Inf}^{1}\ev{G}} \mathbbm{dp}\ev{ X } \rightarrow  \mathbbm{p}\ev{ G } $. 
For each $X\in \textup{Inf}^{1}\ev{G}$, define an equivalence relation $\unsim_{\mathcal{W}\ev{X}}$ of parallelism (cf. Section \ref{sec:preliminaries_for_graphs}) on $ \mathcal{W}\ev{X} $ by:
$$ 
\ev{ v_0, e_1, v_1,\ldots, e_{d-1}, v_{d-1}, e_{d}, v_0 } \unsim_{\mathcal{W}\ev{X}} \ev{ u_0, f_1, u_1,\ldots, f_{d-1}, u_{d-1}, f_{d}, u_0 } \leftrightarrow v_i = u_i \text{ and } e_j \unsim f_j \text{ for each $i,j$.}
$$
Note that $[w]_{\unsim_{\mathcal{W}\ev{X}}} = M_X$, for each Eulerian trail $\mathbf{w}$ of $X$.  
Furthermore, we define an equivalence relation $\unsim_{ \mathcal{B}\ev{X} }$ of parallelism on $\mathcal{B}\ev{X}$ by:
$$ 
\beta_1 \unsim_{\mathcal{B}\ev{X}} \beta_2 \text{ if and only if } \beta_1 = [\mathbf{w}_1],\ \beta_2 = [\mathbf{w}_2], \ \text{ and } \mathbf{w}_1 \unsim_{\mathcal{W}\ev{X}} \mathbf{w}_2 \text{ for some } \mathbf{w}_1, \mathbf{w}_2 \in \mathcal{W}\ev{X}
$$
which further extends to an equivalence relation on $\mathbbm{dp}\ev{ X }$, such that $|[P]_{\unsim_{\mathcal{B}\ev{X}}}| = M_X$, for each decomposition pyramid $P\in \mathbbm{dp}\ev{ X }$. 
We obtain mappings $\widetilde{h},\widetilde{\mathcal{J}},\widetilde{\mathcal{L}}$ and $\widetilde{\mathcal{K}}$, that are well-defined, independent of the choice of representative of an equivalence class in their respective domains, where $ \begin{tikzcd}
\mathcal{U}\ev{G}\arrow[r,shift left=2pt,"\widetilde{\mathcal{J}}"] & \arrow[l,shift left=2pt,"\widetilde{\mathcal{L}} "]  \bigsqcup_{X \in \textup{Inf}^{1}\ev{G}} \mathcal{W}\ev{X} / \unsim_{\mathcal{W}\ev{X}}
\end{tikzcd}$ is a one-to-one correspondence. We obtain a commutative diagram, 
$$
\begin{tikzcd}
\bigsqcup_{X \in \textup{Inf}^{1}\ev{G}} \mathcal{W}^{ [e]_{\unsim} }\ev{X} / \unsim_{\mathcal{W}\ev{X}} \arrow[xshift=5pt]{d}{ \widetilde{\mathcal{L}} }   \arrow{r}{ \widetilde{h} }  & \bigsqcup_{X \in \textup{Inf}^{1}\ev{G}} \mathbbm{dp}^{ [e]_{\unsim} }\ev{ X } / \unsim_{\mathcal{B}\ev{X} }  \arrow{d}{ \widetilde{\mathcal{K}} } \\
\mathcal{U}^{e}\ev{G} \arrow{u}{ \widetilde{\mathcal{J}} } \arrow{r}{ q } & \mathbbm{p}^{ e }\ev{ G } 
\end{tikzcd}
$$
where $\mathcal{W}^{ [e]_{\unsim} }\ev{X}$ is the set of equivalence classes of Eulerian trails such that the last edge is parallel to $e$, and $\mathbbm{dp}^{ [e]_{\unsim} }\ev{ X }$ is the set of equivalence classes of decomposition pyramids such that the maximal piece contains $e$. 
By \Cref{cor:bij_trails_pyramids} and a routine check, the maps $\widetilde{h}, \widetilde{\mathcal{J}}, \widetilde{\mathcal{K}}$ are bijective and $q = \widetilde{\mathcal{K}} \circ \widetilde{h} \circ \widetilde{\mathcal{J}}$, which implies that $q$ is bijective as well.
\end{proof}

\vspace{-0.5cm}
\hypertarget{proof:interestingidentitypf}{\phantom{a}}
\begin{proof}[Proof of \Cref{cor:interesting_identity}: ]
\label{proof:interesting_identity_pf_page}
First, we have,
\begin{align*}
-\log\ev{ \widetilde{\phi}_G\ev{t} } & = \sum_{P\in \mathbbm{p}\ev{G }} \frac{1}{|P|} t^{-|E\ev{P}|} & \text{ by \Cref{cor:all_pyramids}}
\end{align*}
On the other hand, 
\begin{align*}
-\log\ev{ \widetilde{\phi}_G\ev{t} } & = \sum_{\mathbf{w}\in \mathcal{U}\ev{G}} \frac{1}{|\mathbf{w}|} t^{-|\mathbf{w}|} & \text{ by \Cref{thm:log_and_all_walks}}\\
& =  \sum_{u\in V\ev{G}} \sum_{\mathbf{w}\in \mathcal{U}^{u}\ev{G}} \frac{1}{|\mathbf{w}|} t^{-|\mathbf{w}|} = \sum_{u\in V\ev{G}} \sum_{ \substack{P \in \mymathbb{p} \ev{ G } \\ u\in \mathcal{M}\ev{P} }  } \frac{1}{|E\ev{P}|} t^{-|E\ev{P}|} & \text{ by \Cref{cor:bij_walks_pyramids}} \\ 
& = \sum_{ P \in \mymathbb{p} \ev{ G }  } \frac{ |V\ev{\mathcal{M}\ev{P}}| }{|E\ev{P}|} t^{-|E\ev{P}|} 
\end{align*}
\end{proof}

\hypertarget{proof:nonmultiplicativepf}{\phantom{a}}
\begin{proof}[Proof of \Cref{lem:non_multiplicative}: ]
\label{proof:non_multiplicative_pf_page}
From now on, we write $\deg_{D}\ev{ u }$ to mean the in-degree of $u\in V\ev{D}$ for a digraph $D$. Let $X_1$ and $X_2$ be connected Veblen hypergraphs with $V\ev{X_1} = [n]$ and $V\ev{X_2} = \{ n, \ldots, a\}$, where $ 1\leq n \leq a $, $u=n$ and $V\ev{X_1} \cap V\ev{X_2} = \{u\}$. Note that $\mathfrak{s}_u\ev{X} = \mathfrak{s}_u\ev{X_1} + \mathfrak{s}_u\ev{X_2}$, as the intersection of $X_1$ and $X_2$ is a singleton. For each pair of rootings $\ev{ \mathbf{R}_1, \mathbf{R}_1} \in \overline{\mathfrak{R}}\ev{X_1} \times \overline{\mathfrak{R}}\ev{X_2}$, we can obtain a rooting of $X = X_1\cup X_2$, as follows: Let 
$$ \ev{S_{1}\ev{e_{11}},\ldots,S_{1}\ev{e_{1r_1}} , \ldots ,S_{n}\ev{e_{n1}},\ldots,S_{n}\ev{e_{nr_n}}} := R_1 \in \mathbf{R}_1 $$
be a representative, where $i$ is the root of $\mathfrak{s}_i\ev{ X_1 } := r_i $ many stars for $i = 1,\ldots, n$ and let
$$ \ev{S_{n}\ev{f_{n 1}},\ldots,S_{n}\ev{f_{n s_n}}  , \ldots , S_{a}\ev{f_{a 1}},\ldots,S_{a}\ev{f_{a s_a}}} := R_2 \in \mathbf{R}_2 $$
be a representative, where $j$ is the root of $\mathfrak{s}_j\ev{ X_2 } := s_j$ many stars for $j = n,\ldots, a$. Then, we can obtain a rooting:
$$ 
R_1 \oplus R_2:= \ev{S_{1}\ev{e_{11}},\ldots,S_{1}\ev{e_{1r_1}}, \ldots, S_{ n }\ev{e_{n 1}},\ldots, S_{ n }\ev{e_{n r_n}}, S_{ n }\ev{f_{n 1}},\ldots, S_{ n }\ev{f_{n s_n}}, \ldots , S_{a}\ev{f_{a 1}}, \ldots, S_{a}\ev{f_{a s_a}} } 
$$ 
which is a representative of the equivalence class $ [ R_1 \oplus R_2 ]_{\unsim} \in \overline{\mathfrak{R}}\ev{X_1\cup X_2} $. Hence, we obtain a well-defined bijection:
\begin{align*}
\overline{\mathfrak{R}}\ev{X_1} \times \overline{\mathfrak{R}}\ev{X_2} & \leftrightarrow \overline{\mathfrak{R}}\ev{X_1\cup X_2} \\ 
\ev{  \mathbf{R}_1 ,  \mathbf{R}_2 } & \mapsto \mathbf{R}_1 \oplus \mathbf{R}_2 := [ R_1 \oplus R_2 ]_{\unsim} \text{ for any } R_1 \in  \mathbf{R}_1,\ R_2 \in  \mathbf{R}_2
\end{align*}
such that:
\begin{align*}
& | \mathbf{R}_1 \oplus \mathbf{R}_2 | = \dfrac{  \mathfrak{s}_u\ev{X} ! }{ \Gamma_{\mathbf{R}_1} \cdot \Gamma_{\mathbf{R}_2} }  = \dfrac{  (\mathfrak{s}_u\ev{X_1} + \mathfrak{s}_u\ev{X_2})! }{ \mathfrak{s}_u\ev{X_1}! \mathfrak{s}_u\ev{X_2}! }  \cdot \dfrac{ \mathfrak{s}_u\ev{X_1}! }{ \Gamma_{\mathbf{R}_1} } \cdot \dfrac{ \mathfrak{s}_u\ev{X_2}! }{ \Gamma_{\mathbf{R}_2} } = \binom{ \mathfrak{s}_u\ev{X} }{\mathfrak{s}_u\ev{X_1},\mathfrak{s}_u\ev{X_2} }\cdot | \mathbf{R}_1 | \cdot | \mathbf{R}_2 |
\end{align*}
Therefore, we obtain,
\begin{align*}
& C_X = \sum_{ \mathbf{R}  \in \overline{\mathfrak{R}}\ev{X} } |\mathbf{R}|\cdot \dfrac{ \tau_{D_\mathbf{R} } }{ \prod_{v\in D_\mathbf{R} } \deg_{D_\mathbf{R}}\ev{v}} = \sum_{ \substack{ \mathbf{R}_1\in \overline{\mathfrak{R}}\ev{X_1} \\ \mathbf{R}_2\in \overline{\mathfrak{R}}\ev{X_2} } }  | \mathbf{R}_1 \oplus \mathbf{R}_2 |  \cdot \dfrac{ \tau_{D_{\mathbf{R}_1} \cup D_{\mathbf{R}_2} } }{ \displaystyle\prod_{v\in D_{\mathbf{R}_1} \cup D_{\mathbf{R}_2} } \deg_{D_{\mathbf{R}_1} \cup D_{\mathbf{R}_2}}\ev{v} } \\  
& =  \sum_{ \substack{ \mathbf{R}_1\in \overline{\mathfrak{R}}\ev{X_1} \\ \mathbf{R}_2\in \overline{\mathfrak{R}}\ev{X_2} } } | \mathbf{R}_1 \oplus \mathbf{R}_2 | \cdot \dfrac{  \tau_{D_{\mathbf{R}_1}} \tau_{D_{\mathbf{R}_2} }}{ \deg_{D_{\mathbf{R}_1} \cup D_{\mathbf{R}_2}}\ev{u} \displaystyle\prod_{\substack{v\in D_{\mathbf{R}_1} \\ v\neq u} } \deg_{D_{\mathbf{R}_1}}\ev{v} \cdot \displaystyle\prod_{\substack{u\in D_{\mathbf{R}_2} \\ v\neq u } } \deg_{D_{\mathbf{R}_2}}\ev{u} } \\
& =  \sum_{ \substack{ \mathbf{R}_1\in \overline{\mathfrak{R}}\ev{X_1} \\ \mathbf{R}_2\in \overline{\mathfrak{R}}\ev{X_2} } } | \mathbf{R}_1 \oplus \mathbf{R}_2 | \dfrac{ \deg_{D_{\mathbf{R}_1}}\ev{u} \deg_{D_{\mathbf{R}_2}}\ev{u}  }{ \deg_{D_{\mathbf{R}_1} \cup D_{\mathbf{R}_2}}\ev{u} } \dfrac{  \tau_{D_{\mathbf{R}_1}} }{  \displaystyle\prod_{ u\in D_{\mathbf{R}_1}  } \deg_{D_{\mathbf{R}_1}}\ev{v} } \dfrac{  \tau_{D_{\mathbf{R}_2} }}{ \displaystyle\prod_{ u\in D_{\mathbf{R}_2} } \deg_{D_{\mathbf{R}_2}}\ev{u} } \\ 
& =  \sum_{ \substack{ \mathbf{R}_1\in \overline{\mathfrak{R}}\ev{X_1} \\ \mathbf{R}_2\in \overline{\mathfrak{R}}\ev{X_2} } } \binom{  \mathfrak{s}_u\ev{X} }{\mathfrak{s}_u\ev{X_1}, \mathfrak{s}_u\ev{X_2}}  \cdot | \mathbf{R}_1 | \cdot | \mathbf{R}_2 |  \dfrac{ \deg_{D_{\mathbf{R}_1}}\ev{u} \deg_{D_{\mathbf{R}_2}}\ev{u}  }{ \deg_{D_{\mathbf{R}_1} \cup D_{\mathbf{R}_2}}\ev{u} } \dfrac{  \tau_{D_{\mathbf{R}_1}} }{  \displaystyle\prod_{ u\in D_{\mathbf{R}_1}  } \deg_{D_{\mathbf{R}_1}}\ev{v} } \dfrac{  \tau_{D_{\mathbf{R}_2} }}{ \displaystyle\prod_{ u\in D_{\mathbf{R}_2} } \deg_{D_{\mathbf{R}_2}}\ev{u} }\\ 
& =  \sum_{ \substack{ \mathbf{R}_1\in \overline{\mathfrak{R}}\ev{X_1} \\ \mathbf{R}_2\in \overline{\mathfrak{R}}\ev{X_2} } } \binom{  \mathfrak{s}_u\ev{X} }{\mathfrak{s}_u\ev{X_1}, \mathfrak{s}_u\ev{X_2}}  \cdot | \mathbf{R}_1 | \cdot | \mathbf{R}_2 | \dfrac{ \ev{k-1} \mathfrak{s}_u\ev{X_1} \ev{k-1} \mathfrak{s}_u\ev{X_1}  }{ \ev{k-1} \mathfrak{s}_u\ev{X} }  \dfrac{  \tau_{D_{\mathbf{R}_1}} }{  \displaystyle\prod_{ u\in D_{\mathbf{R}_1}  } \deg_{D_{\mathbf{R}_1}}\ev{v} } \dfrac{  \tau_{D_{\mathbf{R}_2} }}{ \displaystyle\prod_{ u\in D_{\mathbf{R}_2} } \deg_{D_{\mathbf{R}_2}}\ev{u} }\\ 
& \hspace{2cm}\text{ by \Cref{rmk:previous_work} } \\ 
& = \ev{k-1} \dfrac{ \mathfrak{s}_u\ev{X_1}\mathfrak{s}_u\ev{X_1}  }{ \mathfrak{s}_u\ev{X} } \binom{  \mathfrak{s}_u\ev{X} }{\mathfrak{s}_u\ev{X_1}, \mathfrak{s}_u\ev{X_2}} \cdot \sum_{\mathbf{R}_1 \in \overline{\mathfrak{R}}\ev{X_1} } | \mathbf{R}_1 |  \dfrac{  \tau_{D_{\mathbf{R}_1} }}{ \displaystyle\prod_{ u\in D_{\mathbf{R}_1} } \deg_{D_{\mathbf{R}_1}}\ev{u} }   \sum_{\mathbf{R}_2 \in \overline{\mathfrak{R}}\ev{X_2}} | \mathbf{R}_2 | \dfrac{  \tau_{D_{\mathbf{R}_2} }}{ \displaystyle\prod_{ u\in D_{\mathbf{R}_2} } \deg_{D_{\mathbf{R}_2}}\ev{u} }\\ 
& = \ev{k-1} \dfrac{ \mathfrak{s}_u\ev{X_1}\mathfrak{s}_u\ev{X_1}  }{ \mathfrak{s}_u\ev{X} } \binom{  \mathfrak{s}_u\ev{X} }{\mathfrak{s}_u\ev{X_1}, \mathfrak{s}_u\ev{X_2}} \cdot C_{X_1} \cdot  C_{X_2} 
\end{align*}
\end{proof}

\vspace{-0.5cm}
\hypertarget{proof:intersectinginfraspf}{\phantom{a}}
\begin{proof}[Proof of \Cref{prop:intersecting_infras}: ]
\label{proof:intersecting_infras_pf_page}
Since $\deg_{X_1}\ev{ u } = \deg_{X_2}\ev{ u } = k$, it follows that $ \widetilde{\mathcal{S}} \ev{ X } = \mathcal{D}_1 \sqcup \mathcal{D}_2 $, where $\mathcal{D}_j = \{ \mathbf{S} = [\{ A_1, \ldots, A_m\}]_{\unsim} \in  \widetilde{\mathcal{S}} \ev{ X } : | \{ i : u\in A_i\} | = j \} $, for $j=1,2$. Therefore,
\begin{align*}
& w_n\ev{ X } t^{|E\ev{X}|} = - \Delta\ev{ n,X }  = \sum_{ \mathbf{S} \in \widetilde{\mathcal{S}} \ev{ X }} \ev{ -\ev{k-1}^{n} }^{ c\ev{ \mathbf{S} } + 1 } \dfrac{1}{\alpha_{\mathbf{S}}} C_{\mathbf{S} } \\ 
& = \sum_{ \mathbf{S} \in \mathcal{D}_1 } \ev{ -\ev{k-1}^{n} }^{ c\ev{ \mathbf{S} } + 1 } \dfrac{1}{\alpha_{\mathbf{S}}} C_{\mathbf{S} } +  \sum_{ \mathbf{S} \in \mathcal{D}_2 } \ev{ -\ev{k-1}^{n} }^{ c\ev{ \mathbf{S} } + 1 } \dfrac{1}{\alpha_{\mathbf{S}}} C_{\mathbf{S} } 
\end{align*}
We have a bijection,
\begin{align*}
\widetilde{\mathcal{S}} \ev{ X_1 } \times \widetilde{\mathcal{S}} \ev{ X_2 } &\rightarrow \mathcal{D}_1 \\
\ev{ \mathbf{S}_1 ,  \mathbf{S}_2}  & \mapsto \mathbf{S}_1\odot \mathbf{S}_2:= [ \{ A_1,\ldots, A_m\cup B_m, \ldots, B_{m+r-1}\} ]_{\unsim}
\end{align*}
where $\mathbf{S}_1 = [  \{ A_1,\ldots, A_m \} ]_{\unsim} $, $\mathbf{S}_2 = [ \{ B_m,\ldots, B_{m+r-1} \} ]_{\unsim} $ and $u\in A_m \cap B_m$. As in the proof of \Cref{prop:cover_mult}, the following is also a bijection:
\begin{align*}
\widetilde{\mathcal{S}} \ev{ X_1 } \times \widetilde{\mathcal{S}} \ev{ X_2 } &\rightarrow \mathcal{D}_2 \\
\ev{ \mathbf{S}_1 ,  \mathbf{S}_2}  & \mapsto \mathbf{S}_1\sqcup \mathbf{S}_2:= [ \{ A_1,\ldots, A_m, B_m, \ldots, B_{m+r-1}\} ]_{\unsim}
\end{align*}
where $\mathbf{S}_1 = [\{ A_1,\ldots, A_m \} ]_{\unsim}$, $\mathbf{S}_2 =[  \{ B_m,\ldots, B_{m+r-1} \} ]_{\unsim} $ and $u\in A_m \cap B_m$. Therefore, 
\begin{align*}
& w_n\ev{ X } = - \sum_{ \substack{\mathbf{S}_1 \in \widetilde{\mathcal{S}} \ev{ X_1 } \\ \mathbf{S}_2 \in \widetilde{\mathcal{S}} \ev{ X_2 }} } \ev{-\ev{k-1}^{n}}^{c\ev{ \mathbf{S}_1 \odot \mathbf{S}_2 }} \dfrac{1}{ \alpha_{  \mathbf{S}_1 \odot \mathbf{S}_2  } } C_{ \mathbf{S}_1 \odot \mathbf{S}_2 } - \sum_{ \substack{\mathbf{S}_1 \in \widetilde{\mathcal{S}} \ev{ X_1 } \\ \mathbf{S}_2 \in \widetilde{\mathcal{S}} \ev{ X_2 }} } \ev{-\ev{k-1}^{n}}^{c\ev{ \mathbf{S}_1 \sqcup \mathbf{S}_2 }}  \dfrac{1}{ \alpha_{  \mathbf{S}_1 \sqcup \mathbf{S}_2  } } C_{ \mathbf{S}_1 \sqcup \mathbf{S}_2 } \\ 
& = - \sum_{ \substack{\mathbf{S}_1 \in \widetilde{\mathcal{S}} \ev{ X_1 } \\ \mathbf{S}_2 \in \widetilde{\mathcal{S}} \ev{ X_2 }} } \ev{-\ev{k-1}^{n}}^{c\ev{ \mathbf{S}_1 } + c\ev{ \mathbf{S}_2 } - 1 } \dfrac{\ev{k-1}}{ \alpha_{  \mathbf{S}_1 }  \alpha_{  \mathbf{S}_2 }  }   C_{ \mathbf{S}_1 }  C_{ \mathbf{S}_2 } - \sum_{ \substack{\mathbf{S}_1 \in \widetilde{\mathcal{S}} \ev{ X_1 } \\ \mathbf{S}_2 \in \widetilde{\mathcal{S}} \ev{ X_2 }} } \ev{-\ev{k-1}^{n}}^{c\ev{ \mathbf{S}_1 } + c\ev{ \mathbf{S}_2 }  } \dfrac{1}{ \alpha_{  \mathbf{S}_1 }  \alpha_{  \mathbf{S}_2 }  }  C_{ \mathbf{S}_1 }  C_{ \mathbf{S}_2 } \\ 
& \text{ by \Cref{lem:non_multiplicative}} \\ 
& =  \ew{ \dfrac{k-1}{\ev{ k-1 }^{n}}  - 1  } \sum_{ \substack{\mathbf{S}_1 \in \widetilde{\mathcal{S}} \ev{ X_1 } \\ \mathbf{S}_2 \in \widetilde{\mathcal{S}} \ev{ X_2 }} } \ev{-\ev{k-1}^{n}}^{c\ev{ \mathbf{S}_1 } + c\ev{ \mathbf{S}_2 }  } \dfrac{1}{ \alpha_{  \mathbf{S}_1 } \alpha_{  \mathbf{S}_2 }  } C_{ \mathbf{S}_1 }  C_{ \mathbf{S}_2 }  \\
& = \ew{ \dfrac{ k-1 }{\ev{ k-1}^{n}}  - 1  }  \cdot \ew{ - \sum_{ \mathbf{S}_1 \in \widetilde{\mathcal{S}} \ev{ X_1 } } \ev{-\ev{k-1}^{n}}^{c\ev{ \mathbf{S}_1 } } \dfrac{1}{ \alpha_{  \mathbf{S}_1  } } C_{ \mathbf{S}_1 } } \cdot \ew{ - \sum_{ \mathbf{S}_2 \in \widetilde{\mathcal{S}} \ev{ X_2 } } \ev{-\ev{k-1}^{n}}^{c\ev{ \mathbf{S}_2 }  } \dfrac{1}{ \alpha_{  \mathbf{S}_2  } } C_{ \mathbf{S}_2 } } \\
& = \ew{ \ev{ k-1}^{ 1 - n }  - 1  } \cdot w_n\ev{ X_1 } \cdot w_n\ev{ X_2 }
\end{align*}
\end{proof}

\end{document}